\documentclass[twoside]{IEEEtran} 
\usepackage[numbers]{natbib}
\usepackage{amsfonts,amsmath,amsthm,amssymb}
\usepackage{url}
\usepackage{dsfont}
\usepackage{graphicx} 
\usepackage[utf8]{inputenc}
\usepackage{subcaption}

\newtheorem{lemme}{Lemma}
\newtheorem{prop}{Proposition}
\newtheorem{theorem}{Theorem}

\newtheorem{definition}{Definition}
\newtheorem{rque}{Remark}
\newtheorem{ass}{Assumption}

\newcommand{\Pvec}{\mathbf{P}}
\newcommand{\Svec}{\mathbf{S}}
\newcommand{\pen}{\operatorname{pen}}
\newcommand{\AnT}{\mathbf{A}_{n \times n}^{1:T}}
\newcommand{\anT}{\mathbf{a}_{n \times n}^{1:T}}
\newcommand{\ZnT}{\mathbf{Z}_{n}^{1:T}}
\newcommand{\znT}{\mathbf{z}_{n}^{1:T}}
\newcommand{\dyn}{\text{dyn}}
\newcommand{\ML}{\text{ML}}

\usepackage{xcolor}
\newcommand{\blue}[1]{\textcolor{black}{#1}}


\begin{document}
\title{
Consistent model selection in a collection of stochastic block models
}

\author{Lucie~ARTS%
\thanks{Lucie Arts is with the Laboratoire de Probabilités, Statistique et Modélisation, Sorbonne Université and Université Paris Cité, CNRS,  F-75005 Paris, France (e-mail: arts@lpsm.paris)}}

\maketitle

\begin{abstract}
We introduce the penalized Krichevsky-Trofimov (KT) estimator as a convergent method for estimating the number of nodes clusters when observing multiple networks within both multi-layer and dynamic Stochastic Block Models. We establish the consistency of the KT estimator, showing that it converges to the correct number of clusters in both types of models when the number of nodes in the networks increases. Our estimator does not require a known upper bound on this number to be consistent. Furthermore, we show that these consistency results hold in both dense and sparse regimes, making the penalized KT estimator robust across various network configurations. We illustrate its performance on synthetic datasets.
\end{abstract}

\begin{IEEEkeywords}
Clustering, Graphs, Krichevsky-Trofimov estimator, Model selection, Order estimation, SBM
\end{IEEEkeywords}

\section{Introduction}

Network analysis has become a widely used tool across numerous scientific domains, providing a powerful framework for analyzing complex systems. From social networks  \citep{use_social_network} to transportation systems \citep{use_transport}, relationships between entities are naturally represented as networks, where nodes denote entities and edges represent their interactions.  

Clustering, a subfield of network analysis, is especially focused on uncovering hidden structures such as communities or clusters, within the network. These structures can provide insights into the underlying processes that generate the observed data and clustering methods aim to group nodes that behave similarly. A key challenge in this context is order estimation, which refers to determining the number of communities or clusters in the network. Model selection plays a critical role here, as an inaccurate choice of the number of clusters can lead to misleading interpretations of the network’s structure.

Various statistical methods have been developed to address the model selection issue, ranging from information theoretic criteria like the Akaike Information Criterion~\citep[AIC,][]{AIC} and the Bayesian Information Criterion \citep[BIC,][]{BIC_vrai_modele} to more specialized approaches like the Integrated Classification Likelihood  \citep[ICL,][]{intro_ICL,ICL_Latouche,ICL_daudin} that is suited to the presence of latent variables. 
The Krichevsky-Trofimov (KT) estimator, a Bayesian estimator rooted in information theory, has been initially developed for estimating the parameters of a categorical distribution without prior knowledge \citep{KT}.  
Its penalized version is linked to minimum description length encoding. It has been used for model selection and appears to be consistent in various models: in HMM  \citep{Liu_Narayan,gassiat_boucheron}, in context tree \citep{csiszar}, in Markov chains with Markov regime \citep{chambaz_matias} and in stochastic block models \citep[SBM,][]{CL20}. Recently, the Singular Bayesian Information Criterion \citep{BIC_singular_modele} has emerged as a promising path for understanding model selection, particularly in complex or high-dimensional data settings. However, this criterion remains theoretical. Proofs of consistency for order estimators frequently rely on an a priori bound on the order, simplifying the analysis but introducing a restriction that may not always be practical nor necessary. Establishing consistency without such bounds is a significant challenge, as it requires addressing the complexities of unbounded parameter spaces while ensuring the robustness of the estimator. Another important aspect of model selection is the pursuit of minimal penalties. In the literature on order estimation, there is a focus on minimizing penalties, as this leads to a lower probability of underestimating the model. For example, \cite{van_Handel} underlines the importance of minimizing penalties to reduce the risk of missing essential clusters or model components. This highlights the complexity of model selection, where the trade-off between non-underestimation and non-overestimation is crucial for obtaining reliable and interpretable results. 

Stochastic Block Models (SBMs) have emerged as a prominent class of statistical random graph models that facilitate node clustering. Originally introduced by \cite{first_SBM}, SBMs incorporate latent variables at the nodes, which assume values from a finite set. These latent variables represent node groups, and the interactions between nodes are governed by the groups to which they belong \citep{bickel2009nonparametric,SBM_commu}. This framework allows SBMs to capture key aspects of real networks, shedding light on the processes that drive their formation and evolution. In the context of the Stochastic Block Model, the identifiability and consistency of cluster estimation depend on whether the number of clusters $k$ is known or needs to be estimated. When $k$ is fixed and known, methods like spectral clustering, which leverages the eigenvalue decomposition of adjacency matrices, are widely used and consistent under conditions such as a sufficiently large spectral gap and well-defined block structures \citep{spectral_consistency, spectral_clustering}. However, when $k$ is unknown, additional challenges arise, as both the model order and the cluster structure must be estimated simultaneously. In this case, criteria such as the Integrated Classification Likelihood (ICL) \citep{ICL_daudin} or penalized likelihood approaches are employed.

Despite the variety of approaches, there remains a lack of consistent criteria for reliably determining the order of a SBM, especially in networks that exhibit complex and heterogeneous patterns of connectivity. Recently, a significant breakthrough was achieved in \cite{CL20}, who established a consistency result on the number of clusters for the SBM that does not require known upper-bound on the number of clusters. This result, which applies to both sparse (the number of edges grows more slowly than the number of nodes, leading to fewer connections) and dense (the number of edges increases in proportion to the number of nodes, resulting in higher connectivity) regimes, uses the penalized Krichevsky-Trofimov estimator. This represents a major advancement in the field.
 
Previous consistency results for model selection and order estimation have been developed in the context of a single graph. The aim of this article is to extend this result to a collection of graphs on the same set of nodes, introducing a common framework that encompasses two key models: the multi-layer SBM (MLSBM) and the dynamic SBM (DynSBM). In the MLSBM \citep{def_multi_SBM}, multiple graphs with the same latent structure are observed simultaneously, making it particularly relevant for networks that evolve across multiple dimensions or layers. On the other hand, in the DynSBM \citep{def_Dyn_SBM_MM} a sequence of graphs is observed and node groups evolve over time via independent  Markov chains. While in MLSBM, index $T$ corresponds to the level (i.e. number of layers), it represents time (or any unidimensional gradient along which the network are organized) in the DynSBM. This article investigates an asymptotic framework where the number of nodes in each graph increases (i.e. $n\rightarrow \infty$) while the number of networks $T$ remains fixed, a setting that, to the best of our knowledge, has not been extensively studied in previous work for a collection of SBMs. 

In this paper, the penalized KT estimator is studied in two regimes: the dense regime, where the probability of having an edge between nodes is considered to be constant, and the sparse regime, where this probability decreases to zero with $n$, having order $\rho_n$. The dense regime, although theoretically interesting, is not considered realistic, as many real-world networks exhibit sparsity. In contrast, the sparse regime is more relevant for such networks, as the edge probability tends to be small, and one challenge lies in controlling the amount of information needed to accurately estimate the clusters. In this sparse regime, we assume that the expected degree of a given node grows to infinity, that is, $n \rho_n \rightarrow \infty$. However, as the sparsity increases further, it becomes more difficult to detect the underlying community structure. At some point, the information available is insufficient to reliably estimate the number of clusters or even detect their presence. In fact, $n\rho_n= \Omega(\log n)$ is the detection threshold for complete recovery of the communities in SBMs \citep{threshold,Mossel},  but we only need $n\rho_n \rightarrow \infty$ at any rate to weakly recover the community labels. In this paper, we prove the consistency of the estimator up to this detection threshold in MLSBM and DynSBM.
 
In summary, this article aims to advance network analysis methodologies by introducing new consistency results for model selection in both multi-layer and dynamic Stochastic Block Models. We begin in Section \ref{def_not} by defining the models and notation relevant to our study. Section \ref{KT_est} introduces the penalized KT estimator and presents the main consistency theorem. Section \ref{proof_thm} outlines the key ideas behind the proof of this theorem with more detailed proofs provided in the Appendix. Finally, Section \ref{simu} includes simulations to demonstrate the practical effectiveness of the estimator.

\section{Definition of a collection of SBMs} \label{def_not}

We start by introducing the definitions of a multi-layer SBM and a dynamic SBM, two models for collections of networks. To simplify the notation, our models are restricted to undirected random graphs without self-loops. The generalization can be achieved with minimal additional modifications.

Consider a collection of binary undirected graphs with no self-loops formed by $n$ nodes and indexed by some $t=1,\dots,T$ (whether level or time). We observe an adjacency array $\mathbf{A}_{n\times n}^{1:T}=\left(A_{n\times n}^{1},\dots, A_{n \times n}^{T}\right)$, where for any $t$ in $ \{1,\dots,T\},$ each matrix $A_{n \times n}^{t} \in \{0,1\}^{n \times n}$ is symmetric and has diagonal entries equal to zero. In both models, we assume that, for each $t$, the $n$ nodes are split into $k$ latent groups, as encoded by the random variables $\ZnT= \left( Z_i^t \right)_{1\leq t \leq T, 1 \leq i \leq n}$ with $Z_{i}^{t} \in \{1,\dots, k\}$ denoting the label of the $i$-th vertex at level or time $t$. We also assume that, conditionally on the collection of latent groups $\{Z_{i}^{t}\}_{1 \leq t\leq T, 1\leq i \leq n}$, the graphs $\mathbf{A}^{1:T}$ are independent. For each fixed level or time $ t $, conditionally on the latent groups $ \{Z_{i}^{t}\}_{1 \leq i \leq n} $, the edges $ \{A_{ij}^{t}\}_{1 \leq i < j \leq n} $ are independent and each edge $ A_{ij}^{t} $ is a Bernoulli random variable with success probability $ P_{Z_{i}^{t}, Z_{j}^{t}}^t $, where $ (P_{ab}^t)_{1 \leq a,b \leq k} \in [0,1]^{k^{2}} $.
More precisely, we assume that 
\begin{equation*}A_{ij}^t \mid \{Z_i^t=a,Z_j^t=b\} \sim \mathcal{B}(P_{ab}^t),\end{equation*} 
with $P_{ab}^t=P_{ba}^t.$

To describe the complete likelihoods, we introduce the following counters for a latent configuration $\mathbf{z}_{n}$ and an observed graph $a_{n \times n}$.
\begin{equation*}n_{a}(\mathbf{z}_{n})= \sum_{i=1}^{n} \mathds{1}\{ z_{i}=a\},\end{equation*}
\begin{equation*}n_{ab}(\mathbf{z}_{n})= 
\begin{cases}
n_{a}(\mathbf{z}_{n}) n_{b}(\mathbf{z}_{n}), & 1\leq a<b \leq k, \\ 
\frac{1}{2} n_{a}(\mathbf{z}_{n}) (n_{a}(\mathbf{z}_{n})-1), & 1\leq a=b \leq k,
\end{cases}\end{equation*}
\begin{equation*}o_{a b}(\mathbf{z}_{n}, a_{n \times n})= 
\begin{cases}
\sum\limits_{1\leq i, j\leq n} \mathds{1}\left\{z_{i}=a, z_{j}=b\right\} a_{i j}, & a<b, \\ 
\sum\limits_{1\leq i < j\leq n}  \mathds{1}\left\{z_{i}=a, z_{j}=b\right\} a_{i j}, & a=b ,
\end{cases}\end{equation*}
and
\begin{equation*}c_{ab}(\znT)= \sum_{t=1}^{T-1}  \sum_{i=1}^{n} \mathds{1} \{z_i^{t}=a, z_i^{t+1}=b\}.\end{equation*}

In words, $n_{a}(\mathbf{z}_{n})$ (respectively $n_{ab}(\mathbf{z}_{n})$) is the number of nodes (resp. pairs of nodes) in cluster $a$ (resp. in clusters $(a,b)$), while $o_{a b}(\mathbf{z}_{n}, a_{n \times n})$ is the number of edges between clusters $a$ and $b$. These counters are defined for a single graph. Finally $c_{ab}(\znT)$ counts the number of transitions of nodes from clusters $a$ to cluster $b$ in the case of a sequence of graphs.

We now describe the specificities of each model.

\paragraph{Multi-layer stochastic block model}
We rely on the multi-layer stochastic block model (MLSBM) with $T$ levels or layers and $k$ communities as defined in \cite{def_multi_SBM}. We denote a $k$-multi-layer SBM a multi-layer stochastic block model with $k$ communities.

Here, the random variables $Z_i^t$ are independent of $t$, so each node $i$ is associated to a latent variable $Z_{i} \in \{1,\dots,k\}$ and is assigned to a class with probability $\pi=\left(\pi_{1},\dots,\pi_{k}\right)$. 
The distribution of the pair $\left(\mathbf{Z}_{n}, \AnT\right)$ is given by
\begin{align}\label{vraiss}
&\left( \mathbf{z}_{n},\anT\right) \mapsto  \mathbb{P}_{\pi, \Pvec}(\mathbf{z}_{n},\anT)\nonumber \\
&= \prod_{1 \leq a \leq k} \pi_{a}^{n_{a}}  \prod_{t=1}^{T} \prod_{1 \leq a \leq b \leq k} \left({P_{a b}^{t}} \right) ^ {o_{a b}^{t}}\left(1-{P_{a b}^{t}}\right)^{n_{a b}-o_{a b}^{t}},  
\end{align}
where $n_{ab}=n_{ab}(\mathbf{z}_{n})$ is the number of pairs of nodes in cluster $(a,b)$ across the layers and $o_{a b}^{t}= o_{a b}(\mathbf{z}_{n}, a_{n \times n}^{t})$ is the number of edges between clusters $a$ and $b$ at level $t$.
We denote by $\Theta^{k,T}_\ML$ the parametric space for a multi-layer model with $k$ communities and $T$ layers given by
\begin{align*}
\Theta^{k,T}_\ML=\{(\pi, \Pvec): & \pi \in(0,1)^{k}, \sum_{a=1}^{k} \pi_{a}=1, \forall t \in \{1,\dots, T\}, \\
& P^{t} \in[0,1]^{k \times k}, P^{t} \text { is symmetric}\} .
\end{align*}

\paragraph{Dynamic stochastic block model}
We rely on the dynamic stochastic block model (DynSBM) as defined in \cite{def_Dyn_SBM_MM}.

Here, the $\{Z_{i}^{1:T}\}_{1\leq i \leq n}$ are independent and identically distributed and each $Z_{i}^{1:T}=\left( Z_i^t\right)_{1 \leq t \leq T}$ is an irreducible, aperiodic and stationary Markov chain with transition probabilities
\begin{equation*}
\mathbb{P}(Z_{i}^{t+1} = b \mid Z_{i}^{t}=a) = \pi_{ab}, \quad \forall 1 \leq a,b\leq k,
\end{equation*}
where $\Pi=(\pi_{ab})_{1\leq a,b\leq k}$ is a stochastic matrix (i.e. with non-negative coefficients and with each row summing to one). We let $\alpha=(\alpha_{1},\dots, \alpha_{k})$ be the initial stationary distribution of the Markov chain. 
We then have
\begin{equation*}\mathbb{P}_{\Pi}(Z_{i}^{1:T})= \alpha_{Z_{i}^{1}} \prod_{t=1}^{T-1} \pi_{Z_{i}^{t}Z_{i}^{t+1}}, \quad \forall i\in [ 1,\dots, n ].\end{equation*}
Since, with discrete latent random variables, identifiability (i.e. it is possible to uniquely determine the model parameters based on the distribution of the observed data) 
can only be obtained up to label switching on the node groups for a permutation $\sigma$ which acts globally (meaning it is the same at each time point $t$), \cite{def_Dyn_SBM_MM} argue that it is necessary to add some constraints on the transition matrix $\Pi$ or on the parameter $\Pvec$. This allows to avoid identification problems caused by label switching at different times. So following these authors we choose to impose that for all $1 \leq a \leq k$ and $1 \leq t,t' \leq T$, we have
\begin{equation*}P_{aa}^t=P_{aa}^{t'}.\end{equation*} 
We denote by $\Theta^{k,T}_\dyn$ the parametric space for a dynamic SBM with $k$ communities, given by
\begin{align*}
&\Theta^{k,T}_\dyn=\{(\Pi, \Pvec): \text{ }\Pi \in[0,1]^{k \times k} \text{ is a stochastic matrix}, \\
& \qquad \forall t \in \{1,\dots, T\}, P^{t} \in[0,1]^{k \times k},  P^{t} \text { is symmetric,} \\
& \text{ and } \forall 1\leq a \leq k,\text{ and } \forall 1\leq t,t'\leq T, \text{ we have } P_{aa}^t=P_{aa}^{t'}\} .
\end{align*} 
Given the latent configuration at starting time $\mathbf{Z}^1_n$, the distribution of the pair $\left(\mathbf{Z}_{n}^{2:T}, \AnT\right)$ is given by
\begin{align}\label{vraiss_dyn}
&\left(\mathbf{z}_{n}^{2:T},\anT\right) \mapsto \mathbb{P}_{\Pi, \Pvec}((\mathbf{z}_{n}^{2:T},\anT)|\mathbf{z}^1_n) \nonumber \\
&=
   \prod_{1 \leq a, b \leq k} \pi_{ab}^{c_{ab}} \prod_{t=1}^{T} \prod_{1 \leq a \leq b \leq k} \left({P_{a b}^{t}} \right) ^ {o_{a b}^{t}}\left(1-{P_{a b}^{t}}\right)^{n_{a b}^t-o_{a b}^{t}}, 
\end{align}
where $n_{ab}^t =n_{ab}(\mathbf{z}_n^t)$, $o_{a b}^{t}= o_{a b}(\mathbf{z}_n^t, a_{n\times n}^t)$ are the number of nodes in cluster $a$ and the number of edges between clusters $a$ and $b$ at time $t$ respectively, and $c_{ab}=c_{ab}(\znT)$ is the number of node transitions from cluster $a$ to cluster $b$. Note that we consider a conditional distribution with respect to the state of the process at the initial time. This is standard when the latent variable is from a Markov chain (as for instance in Hidden Markov Models).

In this article we focus on both dense and sparse regimes. In the case of the dense regime the probability of having an edge is considered to be constant, which means that for all $1 \leq t \leq T$ the matrix $P^{0,t}$ does not depend on $n$. In the case of the sparse regime the probability of having an edge goes to zero with $n$ having order $\rho_n^t$ for the layer $t$, which means that for all $1 \leq t \leq T$ the matrix $P^{0,t}$ can be written as $\rho_n^t S^{0,t}$ with $S^{0,t}$ a matrix not depending on $n$ and with $n \rho_n^t \rightarrow \infty $ and $\rho_{n}^{t} \rightarrow 0$ for all $t \in \{1,\dots,T\}$. We denote $\mathbf{P}^0= \boldsymbol \rho_n \mathbf{S}^0 = \left( \rho_n^1 S^{0,1}, \dots ,  \rho_n^T S^{0,T}\right)$. 
In the multi-layer SBM, we allow a different sparsity for each layer (so $\rho_n^t$ indeed depends on $t$). However, in the context of dynamic SBMs, sparsity is not allowed to depend on time and we assume $\rho_n^t=\rho_n$ independent of time index $t$.

\section{The penalized Krichevsky–Trofimov estimator}\label{KT_est}

\subsection{Definition of the estimator}

We now define the penalized estimator of Krichevsky–Trofimov, which we use to estimate the order (i.e. the number of clusters) in the two models. First we define the order of a model.

\begin{definition} [Order of the model]
  The order of the multi-layer SBM (respectively dynamic SBM) is the smallest integer k for which the equality \eqref{vraiss} (resp. \eqref{vraiss_dyn}) holds for a pair of parameters $(\pi^{0} , \Pvec^{0} ) \in \Theta^{k,T}_\ML$ (resp. $(\Pi^{0} , \Pvec^{0} ) \in \Theta^{k,T}_\dyn$) and is denoted by $k_{0}$. 
 \end{definition}

\begin{ass}\label{ass1}
	If a multi-layer or dynamic SBM has order $k_{0}$ then it cannot be reduced to a model with less communities than $k_{0}$. We therefore assume that there exists $1 \leq t \leq T$ such that the matrix $P^{0,t}$ does not have two identical columns (or rows).  
\end{ass}

\begin{rque}
Assumption \ref{ass1} excludes the possibility that the communities cannot be distinguished in any layer. 
It is true that one can construct artificial examples where, for every $t$, two rows or columns of $P^{0,t}$ coincide, but with the pair of indistinguishable communities depending on $t$. 
Such situations require very specific parameter configurations and are therefore non-generic: if one selects the parameters randomly in the admissible space, the probability of encountering this structure is zero. 
Hence, Assumption \ref{ass1} is not restrictive in practice, as it only rules out these pathological cases, which are unlikely to occur in real applications.
\end{rque}

Before defining the penalized Krichevsky–Trofimov estimator, we define a prior distribution on the parameters and a penalty for each model.

For the multi-layer SBM, we choose as a prior distribution the product of a $\operatorname{Dirichlet}(1 / 2, \dots, 1 / 2)$ for $\pi$, and a product of $Tk(k+1) / 2$ independent $ \operatorname{Beta}(1 / 2,1 / 2)$ distributions for $\Pvec$. Formally, we thus define the distribution $\nu_{k}(\pi, \Pvec)$ on $\Theta^{k,T}_\ML$ as
\begin{align}\label{prior_ml}
\nu_{k}(\pi, \Pvec)= & \frac{\Gamma(\frac{k}{2})}{\Gamma(\frac{1}{2})^{k}} \prod_{1 \leq a \leq k} \pi_{a}^{-\frac{1}{2}} \nonumber \\
&\times \prod_{t=1}^{T} \prod_{1 \leq a \leq b \leq k} \frac{1}{\Gamma(\frac{1}{2})^{2}} {(P_{a b}^{t}})^{-\frac{1}{2}}(1-P_{a b} ^ {t})^{-\frac{1}{2}}, 
\end{align}
and for the penalty, we let
\begin{align}
&\pen_\ML(k, n,T) \nonumber \\
&=  \sum_{i=1}^{k-1} \left( \frac{1}{2} \frac{Ti(i+1)}{2}  \log (n^2) + \frac{1}{2} (i-1) \log n + (1+\epsilon) \log n \right) \nonumber \\
&=  \sum_{i=1}^{k-1} \left[\frac{Ti(i+1)+i-1}{2} +1+\epsilon \right] \log n ,
\label{pen}
\end{align}
for some $\epsilon >0$. The first term $1/2 \times Ti(i+1)/2$ accounts for half the number of connectivity parameters $P_{ab}$ for $a \leq b$ and factors $\log(n^2)$ where $n^2$ is the order of such possible interactions. The second term $(i-1)/2$ is half the dimension of the class proportions $\pi$ and appears times $\log n$. A constant larger than 1 (in practice $1 + \epsilon$) times $\log n$ is added to ensures that the estimator does not overestimate the right number of clusters. This is the classic form of penalties for the KT estimator. Its structure directly follows from the proof and that penalty is certainly too large. However, we note that for $T=1$, we obtain exactly the penalty given in \cite{CL20}. While this penalty might not be minimal, we stress that our result is the first consistency result for estimating the order of a collection of SBMs.

For the dynamic SBM, we choose as a prior distribution a product of $k$ independent Dirichlet $(1/2, \dots, 1/2)$, on each row of $\Pi$, and a product of $k+Tk(k-1) / 2$ independent $\operatorname{Beta}(1 / 2,1 / 2)$ distributions for $\Pvec$. Formally, we thus define the distribution $\nu_{k}(\Pi, \Pvec)$ on $\Theta^{k,T}_\dyn$ as
\begin{align} \label{prior_dyn}
\nu_{k}(\Pi, \Pvec)= &\prod_{1\leq b\leq k} \left[ \frac{\Gamma(\frac{k}{2})}{\Gamma(\frac{1}{2})^{k}} \prod_{1 \leq a \leq k} \pi_{ab}^{-\frac{1}{2}} \right] \nonumber \\
& \times \prod_{1\leq a \leq k}\frac{1}{\Gamma(\frac{1}{2})^{2}} (P_{a a})^{-\frac{1}{2}}(1-P_{a a} )^{-\frac{1}{2}} \nonumber\\
& \times \prod_{t=1}^{T} \prod_{1 \leq a < b \leq k} \frac{1}{\Gamma(\frac{1}{2})^{2}} (P_{a b}^{t})^{-\frac{1}{2}}(1-P_{a b} ^ {t})^{-\frac{1}{2}},
\end{align}
and for the penalty, we let
\begin{align}
& \pen_\dyn(k, n,T) \nonumber \\
&= \sum_{i=1}^{k-1} \biggl( \frac{1}{2} i \log(n^2 T) + \frac{1}{2}i(i-1) \log(nT) \nonumber \\
&\hspace{3cm}+ \frac{1}{2} \frac{Ti(i-1)}{2}\log(n^2) + (1+\epsilon) \log n  \biggr) \nonumber \\
& =\sum_{i=1}^{k-1} \frac{i}{2} \log (n^2T) +  \frac{i(i-1)}{2} \log (nT) \nonumber \\
& \hspace{3cm}+\left[\frac{Ti(i-1) }{2}+1+\epsilon \right] \log n,
\label{pen_dyn}
\end{align}
for some $\epsilon>0$. The penalty is also in the classic form of the penalties for the KT estimator. Indeed, the first term $i/2$ accounts for half the number of connectivity parameters $P_{aa}$ and factors $\log(n^2T)$ where $n^2T$ is the order of such possible interactions. The second term $i(i-1)/2$ is half the dimension of the stochastic matrix $\Pi$ and appears times $\log(nT)$ where $nT$ is the total number of possible transitions. The third term $Ti(i-1)/2$ stands for the number of connectivity parameters $P_{ab}^t$ for $a<b$ and appears times $\log(n^2)/2$ where $n^2$ is the order of possible interactions per layers. Finally the last term, a constant larger than 1 (in practice $1 + \epsilon$) times $\log n$, is the penalty term that ensures the estimator does not overestimate the right number of clusters. The structure of the penalty also results from the proof, and it is certainly too large\blue{---for instance, with respect to a BIC-like penalty.} 

We can now define the estimator: first in a multi-layer SBM, then in a dynamic SBM. The only difference between the two definitions of the estimators lies in the conditioning added in the integrated likelihood for the dynamic SBM model, as well as in the penalization.

\begin{definition} [Penalized KT estimator for MLSBM] 
The penalized Krichevsky–Trofimov estimator of the number of communities in a multi-layer SBM is defined by
\begin{equation}
\hat{k}_{\text{KT}}(\AnT)=\underset{1 \leq k \leq n}{\arg \max } \{\log \mathbf{KT}_{k}^{T}(\AnT)-\pen_\ML(k, n,T)\}, 
\label{KTdef}
\end{equation}
with the penalty defined in \eqref{pen} and where $\mathbf{KT}_{k}^{T}(\AnT)$ is the integrated likelihood with respect to the prior distribution defined in \eqref{prior_ml}, given by
\begin{align*}
\mathbf{KT}_{k} ^ {T}(\AnT) &= \mathbb{E}_{\nu_{k}}\left[\mathbb{P}_{\pi, \Pvec}(\AnT)\right]  \\
&= \int_{\Theta^{k,T}_\ML} \mathbb{P}_{\pi,\Pvec}(\AnT) \nu_{k}(\pi, \Pvec) d \pi d \Pvec.
\end{align*}
\end{definition}

\begin{definition} [Penalized KT estimator for DynSBM]
The penalized Krichevsky–Trofimov estimator of the number of communities in a dynamic SBM is defined by
\begin{equation}
\hat{k}_{\text{KT}}(\AnT )=\underset{1 \leq k \leq n}{\arg \max } \{\log \mathbf{KT}_{k}^{T}(\AnT |\mathbf{Z}_ n^1)-\pen_\dyn(k, n,T)\}, 
\label{KTdef_dyn}
\end{equation}
with the penalty defined in \eqref{pen_dyn} and where $\mathbf{KT}_{k}^{T}(\AnT|\mathbf{Z}_ n^1)$ is the integrated likelihood with respect to the prior distribution defined in \eqref{prior_dyn}, given by
\begin{align*}
 \mathbf{KT}_{k} ^ {T}(\AnT|\mathbf{Z}_n^1) &= \mathbb{E}_{\nu_{k}}\left[\mathbb{P}_{\Pi, \Pvec}(\AnT |\mathbf{Z}_n^1)\right]  \\
& = \int_{\Theta^{k,T}_\dyn} \mathbb{P}_{\Pi,\Pvec}(\AnT |\mathbf{Z}_n^1) \nu_{k}(\Pi, \Pvec) d \Pi d \Pvec.
\end{align*}
\end{definition}

Note that there is no a priori known upper bound on the order of the model and the $\arg \max$ appearing in \eqref{KTdef} and \eqref{KTdef_dyn} is taken for $k \in \{1,\dots,n\}$.

\begin{rque}
A natural alternative strategy would be to apply a consistent single-layer estimator (such as KT of \citep{CL20}) separately on each layer and then retain the maximum number of communities across layers. However, this approach only benefits from the sample size of a single layer, while our estimator pools information across all layers. As shown in Lemma \ref{lemme_exp}, this results in a faster non-overestimation rate depending on $nT$ rather than $n$ alone. In addition, numerical experiments confirm that the proposed multi-layer estimator outperforms this layer-wise baseline.
\end{rque}

\begin{rque}
\blue{The implementation of the estimator in the DynSBM framework is still an open problem; see the simulation section for further details and discussion.}
\end{rque}

\subsection{The consistency theorem}

We now introduce the consistency theorem for the penalized KT estimator.

\begin{theorem} \label{thm}
Consider the multi-layer SBM with $T$ layers and $k_{0}$ communities (resp. the dynamic SBM with $T$ times point and $k_{0}$ communities). Let $\hat k$ be defined as in \eqref{KTdef} (resp. \eqref{KTdef_dyn}). Then, under both sparse (with $n\rho_n^t = \Omega(\log n)$ for all $1 \leq t \leq T$) and dense regimes, we have:
\begin{equation*}
\hat{k}_{K T}(\AnT)=k_{0},
\end{equation*}
eventually almost surely as $n \rightarrow \infty$, with $T$ remaining fixed.
\end{theorem}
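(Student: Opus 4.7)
The plan is to prove Theorem \ref{thm} by splitting the event $\{\hat k \neq k_0\}$ into $\{\hat k < k_0\}$ (non-underestimation) and $\{\hat k > k_0\}$ (non-overestimation), showing that the probability of each is summable in $n$, and concluding via Borel--Cantelli. For every $k \neq k_0$ I would control the sign of
\begin{equation*}
\Delta_n(k) := \bigl[\log \mathbf{KT}_{k_0}^{T}(\AnT) - \pen(k_0, n, T)\bigr] - \bigl[\log \mathbf{KT}_k^{T}(\AnT) - \pen(k, n, T)\bigr],
\end{equation*}
(integrated likelihoods being conditional on $\mathbf{Z}_n^1$ in the dynamic case), and show that $\Delta_n(k) > 0$ eventually almost surely. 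Because the arg max in \eqref{KTdef} and \eqref{KTdef_dyn} is taken over $k \in \{1,\ldots,n\}$ without an a priori bound, the union bound must be carried through this whole range; this is handled by noting that $\log \mathbf{KT}_k^{T} \leq 0$ while $\pen(k,n,T)$ grows polynomially in $k$, so that for $k$ beyond a suitable threshold a crude bound already gives $\Delta_n(k)>0$ deterministically, leaving only polynomially-in-$n$ many values of $k$ to treat individually.

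For non-underestimation, fix $k < k_0$. By Assumption \ref{ass1}, there exists a layer $t$ at which $\mathbf{P}^{0,t}$ has no two identical columns, so the marginal law on that layer does not belong to any $k$-cluster SBM. A standard SBM identifiability computation then yields that, uniformly over $(\pi,\mathbf{P}) \in \Theta^{k,T}_\ML$ (resp.\ $\Theta^{k,T}_\dyn$), the expected log-likelihood gap between the true parameters and $(\pi,\mathbf{P})$ is at least $c\, n^2 \rho_n^t$ for some $c>0$ depending only on $\mathbf{P}^0$ (with $\rho_n^t$ replaced by $1$ in the dense regime). I would upper bound $\log \mathbf{KT}_k^T$ by the supremum over $\theta \in \Theta^{k,T}$ of $\log \mathbb{P}_\theta(\AnT)$ modulo a Laplace-type $O(\log n)$ correction coming from the Dirichlet--Beta prior, and lower bound $\log \mathbf{KT}_{k_0}^T$ by its integrand at the true parameters minus a similar $O(\log n)$ term. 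A Bernstein bound applied to the sum of conditionally independent Bernoulli log-likelihood increments provides exponential control of the fluctuation around the expected gap, with variance proxy of order $n^2\rho_n^t$. Since $n^2 \rho_n^t = \Omega(n \log n)$ dominates the $O(\log n)$ penalty difference, the resulting tail probabilities are summable.

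For non-overestimation, fix $k > k_0$. Here the true law does lie in the $k$-cluster model (a $k_0$-SBM is a degenerate $k$-SBM), so I would compare $\mathbf{KT}_k^T$ and $\mathbf{KT}_{k_0}^T$ directly through the prior-volume / conjugacy structure of the Dirichlet and Beta priors. A Laplace-type computation yields a deterministic bound of the form
\begin{equation*}
\log \mathbf{KT}_k^T(\AnT) - \log \mathbf{KT}_{k_0}^T(\AnT) \leq D(k,k_0,T)\, \log n + O(1),
\end{equation*}
where $D(k,k_0,T)$ exactly matches the main parameter-count terms of $\pen(k,n,T) - \pen(k_0,n,T)$ in \eqref{pen} and \eqref{pen_dyn}. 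The explicit extra $(1+\epsilon)\log n$ per increment of $k$ in the penalty then guarantees that $\Delta_n(k) \geq (k-k_0)\epsilon \log n$ with probability at least $1 - n^{-(1+\epsilon)}$, which is summable. In the dynamic case, the same argument is applied after conditioning on $\mathbf{Z}_n^1$, the only change being that the Dirichlet prior on the rows of $\Pi$ supplies the transition term $i(i-1)/2 \cdot \log(nT)$ appearing in $\pen_\dyn$.

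I expect the main obstacle to be the sparse regime, particularly under the dynamic structure. The Bernstein step used for non-underestimation must be carried out with variance proxy of order $\rho_n$ rather than $1$, and the union bound over the exponentially many latent configurations of the $k$-cluster model must be tight enough not to consume the identifiability gap $c\, n^2 \rho_n$; this is precisely where the hypothesis $n\rho_n^t = \Omega(\log n)$ is used, providing the necessary $\log$-summability of the configurational union bound. In DynSBM, the latent variables are independent across nodes but correlated in time through the Markov chains, so the concentration arguments must rely on mixing estimates and careful bookkeeping of the $nT$ conditionally independent edge variables per connectivity parameter; the constraint $P_{aa}^t=P_{aa}^{t'}$ imposed for identifiability must also be tracked in both the prior-volume comparison and the identifiability gap to avoid degenerate cases when sparsity acts uniformly across $t$.
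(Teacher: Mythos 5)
Your overall decomposition (non-overestimation plus non-underestimation, Borel--Cantelli over the unbounded range of $k$) and your non-underestimation sketch (uniform-in-labels Bernstein concentration of edge counts, an identifiability gap of order $n^2\rho_n^t$ from Assumption \ref{ass1}, comparison with the $O(\log n)$ penalty) follow the same route as the paper, though you gloss over the profile-likelihood/confusion-matrix machinery (Lemmas \ref{lemme_prel_1}--\ref{lemme_prel_3_dyn} and Lemma \ref{lemme_technique}) that makes the "expected log-likelihood gap uniform over $(\pi,\mathbf{P})$ \emph{and} over label assignments" claim rigorous.

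The genuine gap is in your non-overestimation argument. The asserted deterministic bound $\log \mathbf{KT}_{k}^{T}(\AnT)-\log \mathbf{KT}_{k_0}^{T}(\AnT)\leq D(k,k_0,T)\log n+O(1)$, valid for every data array, is false. Take $T=1$, $k_0=1$ and let $\anT$ be a disjoint union of $k$ equal cliques: the term of $\mathbf{KT}_{k}$ corresponding to the correct partition gives $\log \mathbf{KT}_{k}(\anT)\gtrsim -n\log k-O(k^2\log n)$, while $\log \mathbf{KT}_{1}(\anT)\approx-\binom{n}{2}\,h(1/k)$, so the difference is of order $+n^2$, not $O(\log n)$. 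No Laplace/prior-volume computation can repair this, because on atypical data the $k$-model genuinely fits enormously better; the $(1+\epsilon)\log n$ margin in the penalty cannot absorb an $n^2$-sized fluctuation. What saves the day, and what your sketch is missing, is a change-of-measure argument showing such data are improbable under the true law: by Proposition \ref{propfond} applied at $k_0$, $\mathbb{P}_{\pi^0,\Pvec^0}(\anT)\leq \mathbf{KT}_{k_0}^{T}(\anT)\,n^{\frac{Tk_0(k_0+1)+k_0-1}{2}}e^{c_{k_0,T}}$, so the probability of the event $\{\mathbf{KT}_{k_0}^{T}\leq \mathbf{KT}_{k}^{T}e^{\pen(k_0)-\pen(k)}\}$ is bounded, after summing over $\anT$ and using that $\mathbf{KT}_{k}^{T}$ is a probability distribution, by $\exp\{O(\log n)+\pen(k_0,n,T)-\pen(k,n,T)\}$ (Lemma \ref{lemme_exp}); the $(1+\epsilon)\log n$ term then yields summability. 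Your statement that the deterministic bound nevertheless holds "with probability at least $1-n^{-(1+\epsilon)}$" signals the missing probabilistic mechanism without supplying it. Relatedly, your treatment of large $k$ needs a lower bound on $\log\mathbf{KT}_{k_0}^{T}$ of order $-Tn^2$ (not merely $\log\mathbf{KT}_{k}^{T}\leq 0$), the deterministic cutoff then sits near $k\asymp (n^{2}/\log n)^{1/3}$, and a per-$k$ failure probability of only $n^{-(1+\epsilon)}$ is not summable after a union over that many values of $k$ when $\epsilon$ is small; the paper avoids this by letting the bound of Lemma \ref{lemme_exp} improve cubically in $k$ through the penalty difference (Lemma \ref{lemma_pas_log,n}).
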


The following remark clarifies the distinction between weak recovery and exact recovery, 
and explains why the stronger regime is required.

\begin{rque}
When $n\rho_n^t \to \infty$, one can achieve weak recovery, in the sense that 
the estimated partition is positively correlated with the true partition and 
an increasing fraction of nodes are correctly classified with high probability. 
\blue{Although the analysis could have been focused only on the giant component for the $n\rho_n \rightarrow \infty$ case below $\log(n)$ regime, it is imperative to tackle the problem first for the fully connected networks.}
\end{rque}

\blue{Since the true data-generating mechanism of a multilayer network is generally unknown, it is natural to question the robustness of the proposed penalizations beyond their respective modeling frameworks, a point we briefly address below.}

\begin{rque}
\blue{The penalization introduced in Equation~\eqref{pen} is specifically derived for the MLSBM framework, while the penalization in Equation~\eqref{pen_dyn} is tailored to the DynSBM model. These penalties exploit the particular dependence structures of each model and are essential to obtain the corresponding consistency results.
In practice, however, the true data-generating process underlying a multilayer network is typically unknown. This naturally raises the question of robustness to model misspecification, for instance when data generated from a DynSBM are estimated using the MLSBM criterion (or conversely).
From a theoretical standpoint, crossing the penalizations would prevent the current proofs from carrying over directly, since they rely on the precise form of each model. Nevertheless, given the close relationship between the MLSBM and DynSBM frameworks, some degree of robustness may reasonably be expected in practice. An empirical illustration of this behavior under model misspecification is provided in Appendix~\ref{app:misspecification}.}
\end{rque}

\section{Key results and global structure of the proof of the consistency theorem} \label{proof_thm}

We now give the general structure of the proof of Theorem \ref{thm} stating all the key results that lead to the consistency theorem. The proofs of these intermediate results are postponed to the Appendix. The proof is divided into two parts. The first one, presented in Section \ref{non_over}, proves that the estimator does not overestimate the true order $k_0$. The second part, presented in Section \ref{non_under}, shows that the estimator does not underestimate the true order. We start by stating a proposition that is fundamental into both parts.

\subsection{Uniform bound for the likelihood function in terms of the KT distribution} 

\begin{prop} \label{propfond}
   For all $T$, all $k$, all $n \geq k$ and all $\AnT$ we have that for a multi-layer SBM
\begin{align}
&\log \mathbf{KT}_{k}^{T}(\mathbf{A}_{n \times n} ^ {1:T})  \leq \log \sup _{(\pi,  \Pvec) \in \Theta^{k,T}_\ML} \mathbb{P}_{\pi, \Pvec}(\AnT) \nonumber \\
& \leq \log \mathbf{KT}_{k}^{T}(\mathbf{A}_{n \times n} ^ {1:T})+\frac{Tk(k+1)+k-1}{2} \log n +c_{k,T}, 
 \label{eqprop1}
\end{align}
where $ c_{k,T}=Tk(k+1)+1,$ and for a dynamic SBM, for all initial configuration $\mathbf{z}_n^1 \in [k]^n$ we have that
\begin{align}
&\log \mathbf{KT}_{k}^{T}(\AnT |\mathbf{z}_n^1)   \leq \log \sup _{(\Pi,  \Pvec) \in \Theta^{k,T}_\dyn} \mathbb{P}_{\Pi, \Pvec}(\AnT |\mathbf{z}_n^1)
\nonumber \\
& \leq \log \mathbf{KT}_{k}^{T}\left(\AnT |\mathbf{z}_n^1\right)+ \frac{k}{2} \log(n^2T)+\frac{k(k-1)}{2}  \log (nT) \nonumber \\
&\hspace{4cm}+\frac{Tk(k-1)}{2} \log n+c_{k,T}, 
\label{eqprop1_dyn}
\end{align}
where
$
c_{k,T}=\frac{k}{3T}[k(k-1)+2]+Tk(k-1)+2k
$.
\end{prop}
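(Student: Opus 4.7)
The lower bound in each of \eqref{eqprop1} and \eqref{eqprop1_dyn} is immediate: $\nu_k$ is a probability measure on the corresponding parameter space, so $\mathbf{KT}_k^T$ is an average of $\mathbb{P}_\theta$ against $\nu_k$ and cannot exceed $\sup_\theta \mathbb{P}_\theta$. All the work is in the upper bound.

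My plan is to prove the upper bound at the level of the complete-data likelihood and then sum over the latent configuration. Decomposing $\mathbb{P}_\theta(\AnT) = \sum_{\mathbf{z}_n} \mathbb{P}_\theta(\mathbf{z}_n, \AnT)$ in MLSBM (and $\mathbb{P}_\theta(\AnT \mid \mathbf{z}_n^1) = \sum_{\mathbf{z}_n^{2:T}} \mathbb{P}_\theta(\mathbf{z}_n^{2:T}, \AnT \mid \mathbf{z}_n^1)$ in DynSBM) and exchanging the sum with the integral against $\nu_k$, I would first establish the identity $\mathbf{KT}_k^T(\AnT) = \sum_{\mathbf{z}_n} \int \mathbb{P}_\theta(\mathbf{z}_n, \AnT)\,\nu_k(d\theta)$ (and analogously with conditioning on $\mathbf{z}_n^1$). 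Since $\sup_\theta \sum_{\mathbf{z}_n} \mathbb{P}_\theta(\mathbf{z}_n, \AnT) \leq \sum_{\mathbf{z}_n} \sup_\theta \mathbb{P}_\theta(\mathbf{z}_n, \AnT)$, it then suffices to prove a componentwise bound
\begin{equation*}
\sup_\theta \mathbb{P}_\theta(\mathbf{z}_n, \AnT) \leq e^{B_{k,T}(n)} \int \mathbb{P}_\theta(\mathbf{z}_n, \AnT)\,\nu_k(d\theta),
\end{equation*}
where $B_{k,T}(n)$ is independent of $\mathbf{z}_n$ and matches the $\log n$ terms plus $c_{k,T}$ on the right-hand side of \eqref{eqprop1} or \eqref{eqprop1_dyn}; summing over $\mathbf{z}_n$ then transfers the inequality to the marginals.

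Conditioned on $\mathbf{z}_n$, both likelihood and Jeffreys prior factorize into independent blocks: a single $k$-dimensional Dirichlet-Multinomial block for $\pi$ in MLSBM (resp.\ $k$ such blocks, one per row of $\Pi$, in DynSBM), plus independent Beta-Bernoulli blocks for the connectivity parameters $P_{ab}^t$. Conjugacy yields $\int \mathbb{P}_\theta(\mathbf{z}_n, \AnT)\,\nu_k(d\theta)$ in closed form as a product of Gamma-function ratios, while $\sup_\theta \mathbb{P}_\theta(\mathbf{z}_n, \AnT)$ is attained at the empirical frequencies in each block and is also explicit. Block by block, the ratio reduces to factors of the form $\Gamma(m+1)/\Gamma(m+1/2)$ (Beta blocks) or a Dirichlet analogue, which Stirling's formula controls by $\sqrt{m}$ and $N^{(k-1)/2}$ respectively, up to constants depending only on $k$ and $T$. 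For MLSBM this gives $\tfrac{k-1}{2}\log n$ from the Dirichlet block and $\tfrac{Tk(k+1)}{2}\log n$ from the $Tk(k+1)/2$ Beta blocks (each with $m \leq n^2$), reproducing the prefactor $[Tk(k+1)+k-1]/2$ of $\log n$ in \eqref{eqprop1}. For DynSBM, the constraint $P_{aa}^t = P_{aa}^{t'}$ forces one to aggregate diagonal Bernoulli observations across layers, so one obtains $k$ diagonal Beta blocks of size $\leq n^2 T$, $k$ Dirichlet rows of $\Pi$ of size $\leq nT$, and $Tk(k-1)/2$ off-diagonal Beta blocks of size $\leq n^2$, producing the three logarithmic terms in \eqref{eqprop1_dyn}; the Stirling residuals collect into the explicit constants $c_{k,T}$.

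The main technical obstacle is to make the Stirling-type bounds non-asymptotic and uniform in the cell counts $n_a$, $n_{ab}$, $o^t_{ab}$, $c_{ab}$, including degenerate cells where some of these vanish (handled by the crude bound $p^m(1-p)^{m'} \leq 1$ on the corresponding block, which keeps $c_{k,T}$ finite). Once the componentwise inequality is established with a constant independent of $\mathbf{z}_n$, the sum over $\mathbf{z}_n$ distributes through and \eqref{eqprop1} and \eqref{eqprop1_dyn} both follow.
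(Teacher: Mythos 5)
Your proposal is correct and follows essentially the same route as the paper's proof (itself a generalization of Proposition~1 in \cite{CL20}): bound the ratio of the maximized complete-data likelihood to the complete-data KT integral uniformly in the latent configuration, using the closed-form Dirichlet--multinomial and Beta--Bernoulli Gamma-ratio blocks (with the diagonal entries aggregated across layers in the dynamic case) and non-asymptotic Stirling bounds, then sum over $\mathbf{z}_n$. Your accounting of block counts and block sizes matches the logarithmic terms and constants $c_{k,T}$ in \eqref{eqprop1} and \eqref{eqprop1_dyn}.
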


The proof can be found in Appendix \ref{preuve_propfond}.

\subsection{Non-overestimation in the consistency proof} \label{non_over}

We now establish that the estimator $\hat{k}_{KT}$ does not overestimate the true number of communities $k_0$. We start by proving a lemma that is useful to bound the probability of overestimation.

\begin{lemme} \label{lemme_exp}
For $k>k_{0}$ we have for a multi-layer SBM with order $k_0$ and parameter ($\pi^{0}, \Pvec^{0}$)
\begin{align*}
   & \mathbb{P}_{\pi^{0}, \Pvec^{0}}(\hat{k}_{K T}\left(\AnT\right)=k) \\
&\leq \exp \left\{\frac{Tk_{0}(k_{0}+1)+k_0-1}{2} \log n+c_{k_{0},T}+d_{k_{0}, k, n,T}^\ML\right\},
\end{align*}
and for a dynamic SBM with order $k_0$ and parameter ($\Pi^{0}, \Pvec^{0}$)
\begin{align*}
   & \mathbb{P}_{\Pi^{0}, \Pvec^{0}}(\hat{k}_{K T}\left(\AnT \right) =k)\\
&\leq \exp \biggl\{ \frac{k_0}{2}\log(n^2T)+\frac{k_0(k_0-1)}{2}  \log (nT) \\
&\hspace{2cm}+\frac{Tk_0(k_0-1)}{2} \log n+c_{k_0,T}  +d_{k_{0}, k, n,T}^\dyn \biggr\},
\end{align*}
where $d_{k_{0}, k, n,T}^\text{u}=\pen_\text{u}\left(k_{0}, n,T\right)-\pen_\text{u}(k, n,T)$ and $\text{u} \in \{\ML,\dyn\}$.
\end{lemme}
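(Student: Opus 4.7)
The plan is to combine the deterministic upper bound of Proposition \ref{propfond} (applied at the true order $k_0$) with a standard change-of-measure argument that exploits the fact that $\mathbf{KT}_k^T$ is itself a probability mass function on adjacency arrays. Concretely, for the multi-layer case, I would start from
\begin{equation*}
\mathbb{P}_{\pi^0,\Pvec^0}\bigl(\hat k_{KT}(\AnT)=k\bigr) = \sum_{\anT : \hat k_{KT}(\anT)=k}\mathbb{P}_{\pi^0,\Pvec^0}(\anT).
\end{equation*}
Since $(\pi^0,\Pvec^0)\in\Theta^{k_0,T}_\ML$, each summand is bounded above by $\sup_{(\pi,\Pvec)\in\Theta^{k_0,T}_\ML}\mathbb{P}_{\pi,\Pvec}(\anT)$, which by the upper bound in \eqref{eqprop1} applied at order $k_0$ is in turn bounded by $\mathbf{KT}_{k_0}^T(\anT)\exp\{[Tk_0(k_0+1)+k_0-1]/2\cdot\log n + c_{k_0,T}\}$.

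Next I would use the optimality characterization of $\hat k_{KT}$ built into the definition \eqref{KTdef}: on the event $\{\hat k_{KT}(\anT)=k\}$,
\begin{equation*}
\log\mathbf{KT}_{k_0}^T(\anT)-\pen_\ML(k_0,n,T)\leq\log\mathbf{KT}_k^T(\anT)-\pen_\ML(k,n,T),
\end{equation*}
so that $\mathbf{KT}_{k_0}^T(\anT)\leq \mathbf{KT}_k^T(\anT)\exp(d^\ML_{k_0,k,n,T})$. Substituting this into the previous step yields
\begin{equation*}
\mathbb{P}_{\pi^0,\Pvec^0}(\hat k_{KT}=k)\leq e^{\{[Tk_0(k_0+1)+k_0-1]/2\cdot\log n + c_{k_0,T}+d^\ML_{k_0,k,n,T}\}}\sum_{\anT}\mathbf{KT}_k^T(\anT),
\end{equation*}
and the final sum is at most $1$ because $\mathbf{KT}_k^T = \mathbb{E}_{\nu_k}[\mathbb{P}_{\pi,\Pvec}(\,\cdot\,)]$ is, by Fubini, a mixture of genuine probability mass functions on $\AnT$ and hence itself a pmf. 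This produces exactly the multi-layer bound stated in the lemma.

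For the dynamic case I would run the identical three-step manipulation conditionally on the initial configuration $\mathbf{Z}_n^1$, now invoking the second half of Proposition \ref{propfond}, whose correction terms reproduce verbatim the $k_0$-dependent prefactor in the dynamic bound, and using that $\mathbf{KT}_k^T(\,\cdot\mid\mathbf{z}_n^1)$ is a conditional pmf (again by Fubini) so that its sum over $\anT$ is $\leq 1$. Since the resulting right-hand side does not depend on $\mathbf{z}_n^1$, taking the expectation over $\mathbf{Z}_n^1$ transfers the conditional bound to the unconditional one claimed in the lemma.

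The argument is really bookkeeping once Proposition \ref{propfond} is in hand; the only conceptual point requiring care is that the proposition must be applied at order $k_0$ (not $k$), so the supremum that legitimately dominates the data likelihood under the true parameters gets converted into $\mathbf{KT}_{k_0}^T$, while the penalty difference $d_{k_0,k,n,T}$ is produced separately by the defining optimality of $\hat k_{KT}$ at $k$. No concentration inequality or probabilistic estimate is required — the entire bound is purely deterministic on each $\anT$, with the probabilistic content hidden in the trivial normalization $\sum_{\anT}\mathbf{KT}_k^T(\anT)\leq 1$.
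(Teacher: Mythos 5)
Your proposal is correct and follows essentially the same route as the paper's proof: bound $\mathbb{P}_{\pi^0,\Pvec^0}(\anT)$ by the supremum likelihood and then by $\mathbf{KT}_{k_0}^T(\anT)$ via Proposition \ref{propfond} applied at order $k_0$, use the defining optimality of $\hat{k}_{KT}$ at $k$ to replace $\mathbf{KT}_{k_0}^T$ by $\mathbf{KT}_{k}^T e^{d_{k_0,k,n,T}}$, and conclude since $\mathbf{KT}_{k}^T$ sums to one. Your extra remark on conditioning on $\mathbf{Z}_n^1$ and integrating out in the dynamic case is a slightly more explicit rendering of the step the paper dispatches with "the proof is the same."
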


The proof can be found in Appendix \ref{preuve_lemme_exp}.
We now demonstrate that $\hat{k}_{KT}$ does not overestimate $k_0$, which means that we want to establish that $\hat{k}_{KT} \leq k_0$ eventually almost surely. To this aim we start by establishing that, eventually almost surely when $n \rightarrow \infty$ the estimator cannot take values in the interval ($k_0$, $\log n$] and then we prove that it is not greater than $\log n $ either.

\begin{lemme} \label{lemme_pas_k,log}
Let $\AnT$ be a sample of size $n$ and order $k_{0}$ from either a multi-layer SBM or a dynamic SBM with $T$ graphs. We have that
\begin{equation*}
\hat{k}_{K T}\left(\AnT\right) \notin\left(k_{0}, \log n\right],
\end{equation*}
eventually almost surely when $n \rightarrow \infty$.
\end{lemme}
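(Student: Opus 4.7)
The plan is to apply Lemma \ref{lemme_exp} term-by-term and invoke Borel--Cantelli after summing over $k$ in the range $(k_0, \log n]$. For each fixed $k > k_0$, Lemma \ref{lemme_exp} bounds $\mathbb{P}(\hat{k}_{KT}=k)$ by
\[
\exp\bigl\{A(k_0)\log n + c_{k_0,T} + d_{k_0,k,n,T}\bigr\},
\]
where the only $k$-dependence is through $d_{k_0,k,n,T} = \pen(k_0,n,T) - \pen(k,n,T)$, a telescoping sum. In the multi-layer case $A(k_0) = \tfrac{Tk_0(k_0+1)+k_0-1}{2}$, and in the dynamic case it is the three-term expression $\tfrac{k_0}{2}\log(n^2T)+\tfrac{k_0(k_0-1)}{2}\log(nT)+\tfrac{Tk_0(k_0-1)}{2}\log n$ divided by $\log n$. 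The coefficient has been engineered precisely so that it matches the $i=k_0$ summand of $\pen(\cdot)$ minus the $(1+\epsilon)\log n$ safety margin; consequently, writing $k = k_0 + m$ with $m\geq 1$, the leading terms cancel and
\[
A(k_0)\log n + d_{k_0,k,n,T} \leq -m(1+\epsilon)\log n,
\]
since every further summand $B_i$ of the penalty (for $i > k_0$) contributes at least $(1+\epsilon)\log n$.

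Next I would sum the per-$k$ estimate over $k \in (k_0,\log n]$, factoring out the $k$-independent constant $e^{c_{k_0,T}}$ and using the geometric-series bound
\[
\sum_{m=1}^{\infty} n^{-m(1+\epsilon)} \leq \frac{n^{-(1+\epsilon)}}{1 - n^{-(1+\epsilon)}},
\]
to obtain $\mathbb{P}(\hat{k}_{KT} \in (k_0,\log n]) \leq C(k_0, T)\, n^{-(1+\epsilon)}$. Since $\epsilon > 0$, this bound is summable in $n$, so the first Borel--Cantelli lemma forces the event $\{\hat{k}_{KT} \in (k_0,\log n]\}$ to occur only finitely often, which is exactly the conclusion. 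The dynamic case proceeds identically; the matching at $i=k_0$ again leaves a $(1+\epsilon)\log n$ gap and the same geometric decay follows.

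The main difficulty is bookkeeping rather than conceptual: one must verify carefully that the leading coefficient appearing in the Lemma \ref{lemme_exp} bound precisely equals the $i=k_0$ summand of the penalty stripped of its $(1+\epsilon)\log n$ margin, which is the raison d'\^etre of the particular form chosen for the penalties in \eqref{pen} and \eqref{pen_dyn}. I note in passing that the truncation at $\log n$ is not exploited in this step; the same geometric argument would handle any finite range. The upper cutoff is imposed only to delimit the regime treated here, with the complementary regime $\hat{k}_{KT} > \log n$ handled by a separate combinatorial argument in the lemma that follows.
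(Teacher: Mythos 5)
Your proposal is correct and follows essentially the same route as the paper: apply Lemma \ref{lemme_exp}, exploit the fact that the $i=k_0$ summand of the penalty cancels the leading coefficient up to the $(1+\epsilon)\log n$ margin, and conclude by the first Borel--Cantelli lemma. The only (immaterial) difference is that you sum a geometric series in $k$ to get a bound of order $n^{-(1+\epsilon)}$, whereas the paper bounds every term uniformly by the $k=k_0+1$ term and multiplies by the number of terms, obtaining $(\log n)\,n^{-(1+\epsilon)}$, which is equally summable.
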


The proof can be found in Appendix \ref{preuve_lemme_pas_k,log}.

\begin{lemme} \label{lemma_pas_log,n}
Let $\AnT$ be a sample of size $n$ and order $k_{0}$ from either a multi-layer SBM or a dynamic SBM with $T$ graphs. We have that
\begin{equation*}
\hat{k}_{K T}\left(\AnT\right) \notin(\log n, n],
\end{equation*}
eventually almost surely when $n \rightarrow \infty$.
\end{lemme}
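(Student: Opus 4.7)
The plan is to combine the exponential upper bound of Lemma~\ref{lemme_exp} with a union bound over $k\in(\log n,n]$, and then apply the Borel--Cantelli lemma. For the multi-layer case, and for $k > k_0$, Lemma~\ref{lemme_exp} provides a bound of the form
\begin{equation*}
\mathbb{P}_{\pi^0,\Pvec^0}(\hat k_{KT}=k) \leq \exp\bigl\{A(k_0,T)\log n + c_{k_0,T} + \pen_\ML(k_0,n,T) - \pen_\ML(k,n,T)\bigr\},
\end{equation*}
where $A(k_0,T)=[Tk_0(k_0+1)+k_0-1]/2$ does not depend on $k$ or $n$. Expanding the penalty gives
\begin{equation*}
\pen_\ML(k_0,n,T)-\pen_\ML(k,n,T) = -\sum_{i=k_0}^{k-1}\Bigl[\tfrac{Ti(i+1)+i-1}{2}+1+\epsilon\Bigr]\log n,
\end{equation*}
and elementary estimates on $\sum_{i=k_0}^{k-1}i(i+1)$ show that for $k \geq 2k_0$ this quantity is bounded above by $-B\,T k^3 \log n$ for some absolute constant $B>0$.

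Summing the resulting bound over $k\in(\lceil \log n\rceil,n]$ and using that the right-hand side is decreasing in $k$, I obtain
\begin{equation*}
\mathbb{P}_{\pi^0,\Pvec^0}(\hat k_{KT}\in(\log n,n]) \leq n\exp\bigl\{A(k_0,T)\log n + c_{k_0,T} - BT(\log n)^3\log n\bigr\} \leq \exp\bigl\{-\kappa(\log n)^4\bigr\}
\end{equation*}
for some $\kappa>0$ and all $n$ sufficiently large, which is summable in $n$. Borel--Cantelli then gives the desired a.s.\ conclusion.

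The dynamic case proceeds identically. The leading negative contribution in $\pen_\dyn(k_0,n,T)-\pen_\dyn(k,n,T)$ is again cubic in $k$ with coefficient proportional to $T\log n$, coming from the sum of $Ti(i-1)/2$ terms, and the additional positive contributions appearing in the dynamic bound of Lemma~\ref{lemme_exp} remain $O(\log n)$ for $k_0$ and $T$ fixed, so the same $\exp\{-\kappa(\log n)^4\}$ estimate applies. The main obstacle, and really the only non-routine point, is the careful bookkeeping needed to check that the cubic lower bound on the penalty gap actually dominates all residual $O(\log n)$ terms \emph{uniformly} over $k\in(\log n,n]$: near the lower endpoint $k\approx\log n$ one must verify that $T(\log n)^3\log n$ dwarfs $A(k_0,T)\log n+c_{k_0,T}$, while for $k$ close to $n$ one must confirm that the cubic bound on the partial sum remains valid. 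Both checks reduce to elementary polynomial estimates. The constraint $n\geq k$ underlying Proposition~\ref{propfond} (and thus Lemma~\ref{lemme_exp}) is automatic, since the $\arg\max$ defining $\hat k_{KT}$ is taken over $k \leq n$.
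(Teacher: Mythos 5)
Your proposal is correct and follows essentially the same route as the paper: apply Lemma~\ref{lemme_exp} for each $k>k_0$, use the monotonicity of the penalty in $k$ so the worst term is at $k\approx\log n$, note that the penalty gap at $k=\log n$ grows like $T(\log n)^3\log n$ and thus dominates the fixed $O(\log n)$ terms, sum over the at most $n$ values of $k$, and conclude by Borel--Cantelli. The only cosmetic difference is that the paper simply observes the bracket multiplying $\log n$ is eventually below $-3$ (giving an $n^{-2}$ bound), while you track the explicit cubic lower bound and obtain $\exp\{-\kappa(\log n)^4\}$; both are summable and the arguments are equivalent.
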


The proof can be found in Appendix \ref{preuve_lemma_pas_log,n}.
By combining  Lemmas \ref{lemme_pas_k,log} and \ref{lemma_pas_log,n} we immediately obtain that $\hat{k}_{KT}$ does not overestimate $k_0$, wich is formally stated in the next result.

\begin{prop} [Non-overestimation]
Let $\AnT$ be a sample of size $n$ from a multi-layer SBM of order $k_{0}$ with $T$ layers (resp. a dynamic SBM with $k_0$ communities and $T$ time points). Then, the $\hat{k}_{k T}\left(\AnT\right)$ order estimator defined in \eqref{KTdef} (resp. \eqref{KTdef_dyn}) does not overestimate $k_{0}$, eventually almost surely when $n \rightarrow \infty$.
\end{prop}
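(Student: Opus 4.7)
My plan is to derive the non-overestimation statement as a direct logical consequence of Lemmas \ref{lemme_pas_k,log} and \ref{lemma_pas_log,n}, which together rule out every value of $\hat{k}_{KT}(\AnT)$ strictly greater than $k_0$. First I would observe that, since the $\arg\max$ in \eqref{KTdef} (resp.\ \eqref{KTdef_dyn}) is taken over $\{1,\dots,n\}$, the set of overestimating values admits the decomposition
\[
\{k_0+1,\dots,n\} = (k_0, \log n] \,\cup\, (\log n, n].
\]
This two-piece split reflects the two distinct mechanisms used to control overestimation upstream in the proof: an exponential-probability bound obtained from Lemma \ref{lemme_exp} (together with Proposition \ref{propfond}) handles moderate orders $k \in (k_0,\log n]$, while a combinatorial control over the number of partitions into $k$ groups is needed to handle $k \in (\log n, n]$.

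Next I would translate the two "eventually almost surely" conclusions into set-theoretic terms. Let $\Omega_1$ denote the probability-one event on which the conclusion of Lemma \ref{lemme_pas_k,log} holds, and let $\Omega_2$ be the analogous event for Lemma \ref{lemma_pas_log,n}. On $\Omega_1 \cap \Omega_2$, which still carries probability one as the finite intersection of probability-one events, there exist (sample-dependent) indices $N_1$ and $N_2$ such that $\hat{k}_{KT}(\AnT)\notin(k_0,\log n]$ for every $n\geq N_1$ and $\hat{k}_{KT}(\AnT)\notin(\log n,n]$ for every $n\geq N_2$. Taking $N:=\max(N_1,N_2)$, the display above forces $\hat{k}_{KT}(\AnT)\in\{1,\dots,k_0\}$ for all $n\geq N$, which is exactly the claim that $\hat{k}_{KT}$ does not overestimate $k_0$ eventually almost surely.

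\paragraph{Main obstacle.}
At the level of this proposition there is essentially no obstacle: all of the analytic and probabilistic work has been packaged into the two preceding lemmas, and what remains is a short bookkeeping argument about finite intersections of full-probability events and the exhaustiveness of the two sub-intervals $(k_0,\log n]$ and $(\log n,n]$. The only point worth flagging in writing is that the threshold $\log n$ is merely a convenient, $n$-dependent cut-off chosen to match the two lemmas, and the decomposition is valid for every $n\geq 3$ so that $\log n \leq n$.
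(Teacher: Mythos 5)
Your proposal is correct and follows exactly the paper's route: the proposition is obtained by combining Lemmas \ref{lemme_pas_k,log} and \ref{lemma_pas_log,n}, which together exclude all values of $\hat{k}_{KT}(\AnT)$ in $(k_0,\log n]$ and $(\log n,n]$, and the remaining step is just the intersection of the two almost-sure events. Your bookkeeping with $\Omega_1\cap\Omega_2$ and $N=\max(N_1,N_2)$ spells out precisely what the paper summarizes as an immediate consequence.
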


\subsection{Non-underestimation in the consistency proof} \label{non_under}

We now prove that the estimator $\hat{k}_{KT}$ does not underestimate the true number of communities $k_0$. We start by defining the profile likelihood estimator of the label assignment as well as the confusion matrix $Q_n$.

\begin{definition} 
 The profile likelihood estimator of the label assignment under the $k$-multi-layer stochastic block model, with $\AnT$ a sample from a $k_{0}$-multi-layer SBM, is defined as 
 \begin{equation} \label{def_z_etoile}
 \mathbf{z}_{n,k}^{\star}=\underset{\mathbf{z}_{n} \in\left[k\right]^{n}}{\arg \max } \sup _{(\pi, \Pvec) \in \Theta^{k,T}_\ML} \mathbb{P}_{\pi, \Pvec}\left(\mathbf{z}_{n}, \AnT\right).
 \end{equation}
 In the case of a dynamic block model with $k$ communities, with $\AnT$ a sample from a $k_{0}$-dynamic SBM, the profile likelihood estimator is defined as 
 \begin{equation*}  
 \mathbf{z}_{n,k}^{\star}=\underset{\mathbf{z}_n^{2:T} \in\left[k\right]^{nT}}{\arg \max } \sup _{(\pi, \Pvec) \in \Theta^{k,T}_\dyn} \mathbb{P}_{\pi, \Pvec}\left(\mathbf{z}_n^{2:T} , \AnT | \mathbf{z}_{n}^1\right).
 \end{equation*}
\end{definition}

\begin{definition}[Confusion matrix]
    Define for all $\mathbf{\bar z} \in\{1,\dots, k\}^n$  and  $\mathbf{z}\in \{1,\dots, k_0\}^n$ the  $k\times k_0$ matrix $Q_n(\mathbf{\bar z},\mathbf{z})$ given 
by 
\begin{equation*}
[Q_n(\mathbf{\bar z},\mathbf{z})]_{aa'} \;= \frac{1}{n} \sum_{i=1}^n \mathds{1}\{\bar z_i=a, z_i=a'\}.
\end{equation*}
Observe that $n_{a'}(\mathbf{z}) =  n[Q_n^{\intercal}(\mathbf{\bar z},\mathbf{z}){\bf{1}}_{k}]_{a'}$, with ${\bf1}_{k}$  a column vector of dimension $k$
with all entries equal to 1.
Moreover, the matrix $Q_n(\mathbf{\bar z},\mathbf{z})$ satisfies 
\begin{equation*}
\| Q_n(\mathbf{\bar z},\mathbf{z})\|_1 = \sum_{a=1}^k\sum_{a'=1}^{k_0}  [Q_n(\mathbf{\bar z},\mathbf{z})]_{aa'} = 1,
\end{equation*}
for all $(\mathbf{\bar z},\mathbf{z})$ and
\begin{equation*}
\mathbb{E}_{\theta^0_{\text{mod}}}[\tilde o_{ab}( \mathbf{\bar z},A_{n \times n}^{t}) \mid \mathbf{Z}=\mathbf{z}]= n^2[Q_n(\mathbf {\bar z},\mathbf{z}) P^{0,t} Q_n(\mathbf{\bar z},\mathbf{z})^{\intercal}]_{ab},
\end{equation*}
where $\theta^0_{\text{mod}} =( \pi^0, \Pvec^{0}) $ if $\text{mod} =\ML$ and $\theta^0_{\text{mod}} =( \Pi^0, \Pvec^{0}) $ if $\text{mod} = \dyn$, and where
\begin{equation} \label{o_tilde}
\tilde{o}_{ab}( \mathbf{\bar z},A_{n \times n}^{t})=\sum_{1 \leq i, j \leq n} \mathds{1}\left\{\bar z_{i}=a, \bar z_{j}=b\right\} a_{i j}^{t},
\end{equation}
for all pairs $a,b$ and notice that $\tilde{o}_{ab}=o_{ab}$ for all $a \neq b$, and $\tilde{o}_{aa}=2o_{aa}$ for all $a$.
\end{definition}

We present the results useful for proving the non-underestimation in the case of a sparse regime (where for all $1 \leq t \leq T$ the matrix $P^{0,t}$ can be written as $\rho_n^t S^{0,t}$ with $S^{0,t}$ a matrix not depending on $n$), but we draw the reader's attention to the fact that all the following lemmas and propositions can be transposed in the dense case by taking  $\rho_n^t =1$ and $S^{0,t}= P^{0,t}$ for all $1 \leq t \leq T$, and by replacing the function $\tau(u)=u \log u -u$ by the function $\gamma(u)=u \log u + (1-u)\log (1-u)$ (the density of the Bernoulli distribution).

First we establish a concentration lemma for the observed number of edges between any two clusters in the complete model (the model where we know all the information, which means the groups and the connection probabilities), uniformly over the fixed label assignment. This lemma is used to establish the following four lemmas.

\begin{lemme} [Concentration of the observed counts]  \label{lemme_prel_1}
 Consider a $k_{0}$-multi-layer SBM with $T$ layers and parameters $\left(\pi^{0}, \Pvec^{0}\right)$, with $ \Pvec^{0} = \boldsymbol \rho_{n} \Svec^{0}$ with $\Svec^{0}$ not depending on $n$, and $ \boldsymbol \rho_{n} =(\rho_{n}^{1}, \dots, \rho_{n}^{T})$ with $\rho_n^t \geq C\log n/n$ where $C$ is a
sufficiently large constant not depending on n, and $\rho_{n}^{t} \rightarrow 0$ for all $t \in \{1,\dots,T\}$. For any $\xi>0$  and $a,b\in [k]$, and for any $1 \leq t \leq T$ we have that, with $S^{0,t}_{\max}= \max_{a,b}S^{0,t}_{ab}$,
\begin{align*}
&\mathbb P_{\pi^0, \mathbf{P}^0}\Bigl(\sup_{\mathbf{\bar z}\in [k]^n}\; \Bigl|\frac{\tilde o_{ab}(\mathbf{\bar z},A_{n\times n}^{t})}{ \rho_n^t n^2} \\
&\hspace{3cm}- [Q_n(\mathbf{\bar z},\mathbf Z_n) S^{0,t} Q_n(\mathbf{\bar z},\mathbf Z_n)^{\intercal}]_{ab}\Bigr|>\xi\Bigr) \\
&\leq 2 \exp\left(- \frac{\rho_n^t n^{2} \xi ^2}{4 S^{0,t}_{\max}  + \frac{4}{3}\xi} + n \log k \right).
\end{align*}
 Consider a dynamic SBM with $k_{0}$ communities and parameters  $\left(\Pi^{0}, \Pvec^{0}\right)$, with $\Pvec^{0}=\rho_n \Svec^0$ with $\Svec^0$ not depending on $n$, with $\rho_n \geq C \log n/n$ and $\rho_n \rightarrow 0$. For any $\xi>0$  and $a \neq b\in [k]$, and for any $1 \leq t \leq T$ we have that, with $S^{0,t}_{\max}= \max_{a,b}S^{0,t}_{ab}$,
\begin{align*}
&\mathbb P_{\Pi^0, \mathbf{P}^0}\Bigl(\sup_{\mathbf{\bar z}_n^{t}\in [k]^{n}} \Bigl|\frac{ o_{ab}(\mathbf{\bar z}_n^{t},A_{n\times n}^t)}{ \rho_n n^2} \\
&\hspace{3cm}- [Q_n(\mathbf{\bar z}_n^{t},\mathbf{Z}_n^t) S^{0,t} Q_n(\mathbf{\bar z}_n^{t},\mathbf{Z}_n^t)^{\intercal}]_{ab}\Bigr|>\xi \Bigr)\\
& \leq 2 \exp \Big[- \frac{\rho_n n^{2} \xi ^2}{4 S^{0,t}_{\max}  + \frac{4}{3}\xi} + nT \log k\Big],
\end{align*}
and 
\begin{align*} 
&\mathbb P_{\Pi^0, \mathbf{P}^0}\Bigl(\sup_{\mathbf{\bar z}_n^{1:T}\in [k]^{nT}} \Bigl|\frac{ \sum_{t=1}^T o_{aa}(\mathbf{\bar z}_n^{t},A_{n\times n}^t)}{ \rho_n n^2}\\
& \hspace{2cm}- \frac{1}{2}\sum_{t=1}^T[Q_n(\mathbf{\bar z}_n^{t},\mathbf{Z}_n^t)  S^{0,t} Q_n(\mathbf{\bar z}_n^{t},\mathbf{Z}_n^t)^{\intercal}]_{aa}\Bigr|>\xi \Bigr) \\
&\leq 2 \exp \Big[\frac{- \rho_n n^2 \xi ^2 }{4TS^{0}_{\max}  + \frac{4}{3}\xi}+ nT \log k\Big],
\end{align*}
 with $S^{0}_{\max}= \max_{t}S^{0,t}_{\max}$.
\end{lemme}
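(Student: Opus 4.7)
The plan is to combine a classical Bernstein concentration inequality, applied conditionally on the true latent assignment, with a crude union bound over all candidate label configurations $\mathbf{\bar z}$.

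First, I would fix an arbitrary $\mathbf{\bar z}$ and condition on $\mathbf{Z}_n=\mathbf{z}$. Under the conditional law, the edges $\{A_{ij}^t\}_{i<j}$ are independent Bernoulli random variables with parameter $\rho_n^t S_{z_i z_j}^{0,t}$, so
$$\tilde o_{ab}(\mathbf{\bar z}, A_{n\times n}^t)=\sum_{i,j:\,\bar z_i=a,\,\bar z_j=b}A_{ij}^t$$
is (up to the factor $2$ on the diagonal $a=b$ due to the ordered-pair convention, plus a negligible $O(n\rho_n^t)$ correction from $A_{ii}^t=0$) a sum of independent Bernoulli variables. Its conditional mean is exactly $\rho_n^t n^2[Q_n(\mathbf{\bar z},\mathbf{z})S^{0,t}Q_n(\mathbf{\bar z},\mathbf{z})^{\intercal}]_{ab}$, by the identity spelled out in the definition of the confusion matrix $Q_n$.

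Second, I would apply Bernstein's inequality to the centred sum. Each summand is bounded by $2$ (the factor $2$ coming from the diagonal doubling of $\tilde o_{aa}$) and the total variance is at most $2n^2\rho_n^t S^{0,t}_{\max}$. Choosing deviation threshold $\rho_n^t n^2\xi$, Bernstein produces, for a single $\mathbf{\bar z}$,
$$\mathbb{P}\bigl(|\tilde o_{ab}-\mathbb{E}[\tilde o_{ab}\mid\mathbf{Z}_n]|>\rho_n^t n^2\xi\mid\mathbf{Z}_n\bigr)\leq 2\exp\!\left(-\frac{\rho_n^t n^2\xi^2}{4S^{0,t}_{\max}+\tfrac{4}{3}\xi}\right).$$
A crude union bound over $\mathbf{\bar z}\in[k]^n$ adds an $n\log k$ term in the exponent, which is the first (multi-layer) inequality. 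For the second (off-diagonal, dynamic) inequality the argument is identical, but the union bound is performed over the $k^{nT}$ possible joint labellings $\mathbf{\bar z}_n^{1:T}$, producing the $nT\log k$ term. For the third (aggregated) inequality, I would use that $\sum_{t=1}^T o_{aa}(\mathbf{\bar z}_n^t,A_{n\times n}^t)$ is a sum of independent Bernoullis across all $(i,j,t)$ triples (exploiting the conditional independence of the $T$ graphs given the latent labels) with total variance at most $Tn^2\rho_n S^{0}_{\max}/2$; Bernstein applied to this aggregate, combined with the same $k^{nT}$ union bound, yields the stated bound. Unconditioning by integrating against the law of $\mathbf{Z}_n$ preserves the bounds.

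The main technical challenge is ensuring that the Bernstein exponent $c\rho_n^t n^2\xi^2$ dominates the combinatorial union-bound cost $n\log k$ (respectively $nT\log k$). In the sparse regime this is precisely where the hypothesis $\rho_n^t\geq C\log n/n$, with $C$ large enough, becomes necessary: below that threshold the concentration is swamped by the combinatorics of the $[k]^n$-sized label space. The remaining care point is the bookkeeping of the factor $2$ coming from the diagonal-doubling convention in $\tilde o_{aa}$ and from the symmetry of the adjacency matrix, which determines the precise constants in the Bernstein denominator.
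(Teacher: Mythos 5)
Your proposal follows essentially the same route as the paper: condition on the true latent labels, apply Bernstein's inequality to the centred edge counts for a fixed $\mathbf{\bar z}$ (with the factor-$2$ bookkeeping for the diagonal counts and the variance bound $\rho_n^t S^{0,t}_{\max}$ per summand), then take a union bound over the $k^{n}$ (resp.\ $k^{nT}$) label configurations and integrate over $\mathbf{Z}$, which is exactly how the stated exponents arise. The constants you obtain are consistent with (for the aggregated diagonal term, even slightly sharper than) those in the paper, so the argument is correct.
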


 The proof can be found in Appendix \ref{preuve_lemme_prel_1}.
We now introduce some lemmas that are useful in the proof of the non-underestimation of $k_0$ by $\hat{k}_{KT}$. We start with two lemmas in the case of the multi-layer SBM. Lemma \ref{lemme_prel_2} replaces a law of large number for the observed edge counts between any pairs of clusters under a configuration $\mathbf{z}_{n,k}^{\star}$ obtained in a smaller model $(k<k_0)$.

\begin{lemme} \label{lemme_prel_2}
 Consider a $k_{0}$-multi-layer SBM with $T$ layers and parameters $\left(\pi^{0}, \Pvec^{0}\right) \in \Theta_\ML^{k_0,T}$, with $ \Pvec^{0} =\boldsymbol \rho_{n} \Svec^{0}$ with $\Svec^{0}$ not depending on $n$, and $\boldsymbol \rho_{n} =(\rho_{n}^{1}, \dots, \rho_{n}^{T})$ with $\rho_n^t \geq C\log n/n$ and $\rho_{n}^{t} \rightarrow 0$ for all $t \in \{1,\dots,T\}$. For $1 \leq t \leq T$, and $k<k_0$  there exists a $k\times k$ positive matrix $S^{\star,t}$ and a $k$-dimensional vector $\pi^\star$ such that 
\begin{align*}
    \limsup_{n\rightarrow \infty} \frac{1}{2}  \sum_{1 \leq a,b \leq k} &\frac{n_{a}( \mathbf{z}_{n,k}^{\star}) n_{b}( \mathbf{z}_{n,k}^{\star})}{ n^{2}} \tau \left(\frac{\tilde{o}_{a b}( \mathbf{z}_{n,k}^{\star}, A_{n \times n}^{t})}{\rho_n^t n_{a}( \mathbf{z}_{n,k}^{\star}) n_{b}( \mathbf{z}_{n,k}^{\star})}\right) \\
    &\leq  
    \frac{1}{2} \sum_{1 \leq a,b \leq k} \pi_{a}^{\star} \pi_{b}^{\star} \tau \left(S^{\star,t}_{ab}\right),
\end{align*}
almost surely. Moreover,  for all $1 \leq t \leq T$, the pair $(\pi^\star,S^{\star,t})$ is given by 
\begin{align} \label{def_S_star_pi_star} 
    \pi^\star_a &=[R^{\star}{\bf{1}}_{k_0}]_a, \quad   a\in \{1,\dots,k\} \nonumber\\
    S^{\star,t}_{ab} &= \frac{[R^{\star}S^{0,t}(R^{\star})^{\intercal}]_{ab}}{[R^{\star}{\bf{1}}_{k_0}{\bf{1}}_{k_0}^{\intercal} (R^{\star})^{\intercal}]_{ab}},\quad a,b\in \{1,\dots,k\},
\end{align}
for  a matrix  $R^{\star} \in [0,1]^{k \times k_{0}}$  satisfying  $\|R^{\star}\|_1=1$ and having one and only one non-zero entry on each column. 
\end{lemme}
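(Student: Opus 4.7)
The plan is to combine the compactness of the set of confusion matrices with the uniform concentration of Lemma~\ref{lemme_prel_1} and the optimality of $\mathbf{z}_{n,k}^{\star}$ as a profile likelihood maximizer. These three ingredients give, respectively: existence of a cluster point $R^{\star}$ of $Q_n(\mathbf{z}_{n,k}^{\star},\mathbf{Z}_n)$, identification of the asymptotic value of the left-hand side in terms of $R^{\star}$, and the ``hard partition'' structure of $R^{\star}$ required by \eqref{def_S_star_pi_star}.

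First, I would use that the confusion matrices live in the compact set $\{R\in[0,1]^{k\times k_0}:\|R\|_1=1\}$. A diagonal extraction yields a subsequence $\{n_l\}$ along which the $\limsup$ is attained simultaneously for all $t\in\{1,\dots,T\}$ and along which $Q_{n_l}(\mathbf{z}_{n_l,k}^{\star},\mathbf{Z}_{n_l})\to R^{\star}$. The column sums of $Q_n$ equal $n_{a'}(\mathbf{Z}_n)/n$, which converge almost surely to $\pi_{a'}^{0}$ by the strong law of large numbers, so the columns of $R^{\star}$ sum to $\pi^{0}$. I then set $\pi^{\star}=R^{\star}\mathbf{1}_{k_0}$ and define $S^{\star,t}$ through \eqref{def_S_star_pi_star}.

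Next I apply Lemma~\ref{lemme_prel_1} with a vanishing sequence $\xi_n\to 0$ slow enough that $\rho_n^{t} n^2\xi_n^2/(n\log k)\to\infty$; the lower bound $\rho_n^{t}\geq C\log n/n$ makes this possible. A Borel--Cantelli argument yields, almost surely for $n$ large and uniformly in $\bar{\mathbf z}\in[k]^n$, $\bigl|\tilde{o}_{ab}(\bar{\mathbf z},A_{n\times n}^{t})/(\rho_n^{t} n^2)-[Q_n(\bar{\mathbf z},\mathbf{Z}_n)S^{0,t}Q_n(\bar{\mathbf z},\mathbf{Z}_n)^{\intercal}]_{ab}\bigr|<\xi_n$. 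Specializing to $\bar{\mathbf z}=\mathbf{z}_{n_l,k}^{\star}$ and combining with $Q_{n_l}\to R^{\star}$, the ratio $\tilde{o}_{ab}/(\rho_{n_l}^{t} n_a n_b)$ converges to $S_{ab}^{\star,t}$ on every index with $\pi_a^{\star}\pi_b^{\star}>0$; indices with $\pi_a^{\star}\pi_b^{\star}=0$ correspond to a zero row or column of $R^{\star}$, forcing $[R^{\star} S^{0,t}(R^{\star})^{\intercal}]_{ab}=0$ as well, so that these terms contribute negligibly to the sum. Continuity of $\tau$ on $[0,\infty)$ with the convention $\tau(0)=0$ and the asymptotic boundedness of the ratios allow me to pass to the limit term by term and recover the right-hand side of the stated inequality.

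The main obstacle is proving the ``hard partition'' property: every column of $R^{\star}$ has exactly one non-zero entry. I would argue by contradiction. Suppose column $a'_0$ of $R^{\star}$ has two non-zero entries in rows $a_1$ and $a_2$. I construct a perturbed assignment $\tilde{\mathbf z}_n$ by moving all nodes of true label $a'_0$ currently assigned to super-cluster $a_2$ into super-cluster $a_1$; the limiting confusion matrix $\tilde R$ then has column $a'_0$ concentrated at $a_1$. Using the strict convexity of $\tau$ together with a log-sum-type inequality applied to the functional $R\mapsto \sum_t \sum_{a,b}[RS^{0,t}R^{\intercal}]_{ab}\log\bigl([RS^{0,t}R^{\intercal}]_{ab}/[R\mathbf{1}_{k_0}\mathbf{1}_{k_0}^{\intercal} R^{\intercal}]_{ab}\bigr)$, one shows that the profile log-likelihood at $\tilde{\mathbf z}_n$ exceeds that at $\mathbf{z}_{n,k}^{\star}$ by an amount of order $\rho_n n^2$, contradicting optimality. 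The delicate points are to ensure that this convexity gain dominates the $o(\rho_n n^2)$ error in the concentration step and that the convexity inequality is strict whenever the splitting is non-trivial. Once the structural property is verified, $R^{\star}$ is of the form required by \eqref{def_S_star_pi_star} and the lemma follows.
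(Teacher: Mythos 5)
The first half of your plan (rewriting the sum through the confusion matrix, invoking Lemma~\ref{lemme_prel_1} with a vanishing $\xi_n$, and passing to the limit by compactness) is in line with what the paper does, up to the fact that the paper handles the $n$-dependent constraint $Q_n^{\intercal}\mathbf{1}_k=n(\mathbf Z)/n$ through Lemma~\ref{lemme_technique} rather than by subsequence extraction. The genuine gap is in your treatment of the ``hard partition'' property. You try to prove that the limit $R^{\star}$ of the \emph{actual} confusion matrices $Q_n(\mathbf{z}_{n,k}^{\star},\mathbf{Z}_n)$ has exactly one non-zero entry per column, by arguing that if a column of $R^{\star}$ were split between rows $a_1$ and $a_2$, merging those nodes would strictly increase the profile likelihood, contradicting optimality of $\mathbf{z}_{n,k}^{\star}$. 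This argument goes in the wrong direction: by the log-sum inequality, $\sum_i b_i\,\tau(a_i/b_i)\ \geq\ \bigl(\sum_i b_i\bigr)\,\tau\bigl(\sum_i a_i/\sum_i b_i\bigr)$, so merging two estimated clusters (coarsening the partition) weakly \emph{decreases} the limiting functional — this is exactly the inequality exploited in Lemma~\ref{lemme_prel_3}. Hence the perturbed assignment $\tilde{\mathbf z}_n$ is asymptotically no better than $\mathbf{z}_{n,k}^{\star}$ and no contradiction arises. Worse, the structural claim itself need not hold: when an estimated pair of clusters consists of two pieces of the same true community, the log-sum inequality is an equality, so the functional value is unchanged by the split and the optimizer's confusion matrix has no reason to converge to a vertex.

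The paper sidesteps this entirely, and you should too: since the lemma only asserts an \emph{upper bound}, it suffices to bound the $\limsup$ by
\begin{equation*}
\sup_{\substack{R:\ \|R\|_1=1\\ R^{\intercal}\mathbf{1}_k=\pi}}\ \frac{1}{2}\sum_{1\leq a,b\leq k}[R\mathbf{1}_{k_0}]_a[R\mathbf{1}_{k_0}]_b\,
\tau\!\left(\frac{[RS^{0,t}R^{\intercal}]_{ab}}{[R\mathbf{1}_{k_0}]_a[R\mathbf{1}_{k_0}]_b}\right),
\end{equation*}
and then observe that this is the maximum of a convex function over the convex polytope $\{R:\|R\|_1=1,\ R^{\intercal}\mathbf{1}_k=\pi\}$, hence attained at a vertex, and the vertices (because $\pi_a>0$ for all $a$) are precisely the matrices with one and only one non-zero entry per column. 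The $R^{\star}$ appearing in \eqref{def_S_star_pi_star} is that maximizer, not the limit of the confusion matrices of $\mathbf{z}_{n,k}^{\star}$; no optimality-based contradiction is needed. A secondary, smaller point: your dismissal of the indices with $\pi^{\star}_a\pi^{\star}_b=0$ needs more care, since the factor $n_an_b/n^2$ can vanish while the argument of $\tau$ grows like $1/\rho_n^t$, so the product is controlled only if the concentration error decays fast enough relative to $\log n$; this should be made explicit rather than attributed to continuity of $\tau$ at $0$.
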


The proof can be found in Appendix \ref{preuve_lemme_prel_2}.

\begin{lemme} 
[Lower bound on the log-maximum-likelihood ratio - MLSBM] \label{lemme_prel_3}
Consider a $k_{0}$-multi-layer SBM with $T$ layers and parameters $\left(\pi^{0}, \Pvec^{0}\right)$, with $ \Pvec^{0} = \boldsymbol \rho_{n} \Svec^{0}$ with $\Svec^{0}$ not depending on $n$, and $\boldsymbol  \rho_{n} =(\rho_{n}^{1}, \dots, \rho_{n}^{T})$ with $\rho_n^t \geq C\log n/n$ and $\rho_{n}^{t} \rightarrow 0$ for all $t \in \{1,\dots,T\}$. Then for all $k = k_{0}-1$ and $(\pi^{\star}, S^{\star,t})$ as in Lemma \ref{lemme_prel_2} we have that 
\begin{equation*}
    \sum_{1 \leq a,b \leq k_{0}} \pi_{a} \pi_{b} \tau \left(S^{0,t}_{ab}\right) - \sum_{1 \leq a,b \leq k} \pi_{a}^{\star} \pi_{b}^{\star} \tau \left(S^{\star,t}_{ab}\right) \geq 0,
\end{equation*}
with a strict inequality if $S^{0,t}$ has no two identical columns.
\end{lemme}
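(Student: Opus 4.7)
The plan is to exploit the explicit algebraic structure of the merging map encoded by $R^\star$ together with the strict convexity of $\tau(u)=u\log u - u$, recognising $\pi^\star$ and $S^{\star,t}$ as averaged versions of $\pi^0$ and $S^{0,t}$, and then applying Jensen's inequality block by block.

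First I would unpack the structure of $R^\star$. Since each column has exactly one non-zero entry, $R^\star$ induces a map $\phi\colon\{1,\dots,k_0\}\to\{1,\dots,k\}$, where $r_{a'}:=R^\star_{\phi(a'),a'}$ is the unique non-zero entry of column $a'$. The constraint $\|R^\star\|_1=1$ combined with the confusion-matrix origin of $R^\star$ (whose column sums converge to the empirical proportions $n_{a'}/n\to \pi^0_{a'}$) forces $r_{a'}=\pi^0_{a'}$. When $k=k_0-1$, $\phi$ cannot be injective, so by pigeonhole there is $a_0\in\{1,\dots,k\}$ with $|\phi^{-1}(a_0)|\ge 2$. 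Substituting into \eqref{def_S_star_pi_star} yields
\[
\pi^\star_a=\sum_{a'\in\phi^{-1}(a)}\pi^0_{a'},\qquad
S^{\star,t}_{ab}=\sum_{\substack{a'\in\phi^{-1}(a)\\ b'\in\phi^{-1}(b)}} \frac{\pi^0_{a'}\pi^0_{b'}}{\pi^\star_a\pi^\star_b}\,S^{0,t}_{a'b'},
\]
so that each $S^{\star,t}_{ab}$ is an explicit convex combination of the entries $S^{0,t}_{a'b'}$ indexed by the block $\phi^{-1}(a)\times\phi^{-1}(b)$.

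Next I would apply Jensen's inequality block by block. Since $\tau''(u)=1/u>0$ on $(0,\infty)$, $\tau$ is strictly convex, so for each $(a,b)$ with $\pi^\star_a\pi^\star_b>0$,
\[
\pi^\star_a\pi^\star_b\,\tau(S^{\star,t}_{ab})
\le \sum_{\substack{a'\in\phi^{-1}(a)\\ b'\in\phi^{-1}(b)}} \pi^0_{a'}\pi^0_{b'}\,\tau(S^{0,t}_{a'b'}).
\]
Summing over $(a,b)$ and using that the blocks $\{\phi^{-1}(a)\times\phi^{-1}(b)\}_{a,b}$ partition $[k_0]^2$, the right-hand side collapses to $\sum_{a',b'}\pi^0_{a'}\pi^0_{b'}\tau(S^{0,t}_{a'b'})$, which is the claimed inequality.

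For the strict inequality under the assumption that $S^{0,t}$ has no two identical columns, I would argue by contradiction. If equality held in the aggregated bound, then every block inequality with $\pi^\star_a\pi^\star_b>0$ would have to be an equality; by strict convexity of $\tau$, this forces $S^{0,t}_{a'b'}$ to be constant over $(a',b')\in\phi^{-1}(a)\times\phi^{-1}(b)$. Picking two distinct $c_1,c_2\in\phi^{-1}(a_0)$ and letting $b$ range over the image of $\phi$ (equivalently, over $\{b:\pi^\star_b>0\}$), one obtains $S^{0,t}_{c_1,b'}=S^{0,t}_{c_2,b'}$ for every $b'\in \bigcup_{b\in\mathrm{Im}\,\phi}\phi^{-1}(b)=[k_0]$, i.e.\ columns $c_1$ and $c_2$ of $S^{0,t}$ coincide, contradicting the hypothesis. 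The main obstacle I foresee is purely bookkeeping: carefully tracking the preimage partition and handling the possibly non-surjective case where some $\pi^\star_a$ vanish and $S^{\star,t}_{ab}$ is a priori undefined. This is resolved by the continuity convention $0\cdot\tau(\cdot)=0$ and by restricting Jensen steps to blocks with strictly positive weight, which still cover all of $[k_0]^2$ by the surjectivity of $\phi$ onto its image.
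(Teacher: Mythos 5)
Your proof is correct and follows essentially the same route as the paper: identify $\pi^\star$ and $S^{\star,t}$ as merged/averaged versions of $\pi$ and $S^{0,t}$ via the single non-zero entry per column of $R^\star$ (which equals $\pi_{a'}$ because $(R^\star)^{\intercal}\mathbf{1}_k=\pi$), apply convexity of $\tau$ blockwise, and read off from the equality case that two columns of $S^{0,t}$ would have to coincide. The only differences are cosmetic: you invoke Jensen for the strictly convex $\tau$ where the paper uses the (equivalent) log-sum inequality, and you treat a general merge map $\phi$ (including non-surjective cases) where the paper reduces WLOG to the merging of exactly two communities.
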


The proof can be found in Appendix \ref{preuve_lemme_prel_3}.
Then we introduce similar Lemmas for the case of the dynamic SBM.

 \begin{lemme}\label{lemme_prel_2_dyn}
Consider a dynamic SBM with $k_{0}$ communities and parameters  $\left(\Pi^{0}, \Pvec^{0}\right)$, with $\Pvec^{0}=\rho_n \Svec^0$ with $\Svec^0$ not depending on $n$, with $\rho_n \geq C \log n/n$ and $\rho_n \rightarrow 0$. For $1 \leq t \leq T$, and $k<k_0$  there exists a $k\times k$ positive matrix $S^{\star,t}$ and a $k$-dimensional vector $\alpha^\star$ such that 
\begin{align*}
    \limsup_{n\rightarrow \infty} \frac{1}{2} \sum_{t=1}^T  \sum_{1 \leq a \neq b \leq k} & \frac{n_{ab}(\mathbf{z}_{n,k}^{\star,t}) }{ n^{2}} \tau \left( \frac{{o}_{a b}(\mathbf{z}_{n,k}^{\star,t}, A_{n\times n}^t)}{\rho_n n_{ab}(\mathbf{z}_{n,k}^{\star,t})}\right) \\& \leq  
    \frac{1}{2}\sum_{t=1}^T \sum_{1 \leq a \neq b \leq k} \alpha_{a}^{\star} \alpha_{b}^{\star} \tau \left(S^{\star,t}_{ab}\right),
\end{align*}
and 
\begin{align*}
    \limsup_{n\rightarrow \infty} \sum_{1 \leq a \leq k} &\frac{\sum_{t=1}^T n_{aa}(\mathbf{z}_{n,k}^{\star,t})}{n^2} \tau \left( \frac{\sum_{t=1}^To_{aa}(\mathbf{z}_{n,k}^{\star,t},A_{n\times n}^t)}{\sum_{t=1}^T\rho_n n_{aa}(\mathbf{z}_{n,k}^{\star,t})}\right)\\& \leq \frac{T}{2}\sum_{1 \leq a \le k} (\alpha_{a}^{\star})^2  \tau \left(S^{\star}_{aa}\right).
\end{align*}
Moreover,  $(\alpha^\star,S^{\star,t})$ is given by 
\begin{align} \label{def_S_star_alpha_star}
    \alpha^\star_a &=[R^{\star}{\bf{1}}_{k_0}]_a, \quad   a\in \{1,\dots,k\} \nonumber\\
    S^{\star,t}_{ab} &= \frac{[R^{\star}S^{0,t}(R^{\star})^{\intercal}]_{ab}}{[R^{\star}{\bf{1}}_{k_0}{\bf{1}}_{k_0}^{\intercal} (R^{\star})^{\intercal}]_{ab}},\quad a \neq b\in \{1,\dots,k\}, \nonumber \\
    S^{\star}_{aa} &= \frac{\sum_{t=1}^T[R^{\star}S^{0,t}(R^{\star})^{\intercal}]_{aa}}{T[R^{\star}{\bf{1}}_{k_0}]_{a}^2},\quad a \in \{1,\dots,k\}.
\end{align}
for a matrix  $R^{\star} \in [0,1]^{k \times k_{0}}$  satisfying  $\|R^{\star}\|_1=1$ and having one and only one non-zero entry on each column. 
 \end{lemme}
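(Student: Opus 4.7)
\medskip

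The plan is to mirror the strategy of Lemma~\ref{lemme_prel_2}, adapting it to the dynamic setting, by combining the uniform concentration of Lemma~\ref{lemme_prel_1} with a compactness argument on the sequence of confusion matrices. First, I would apply the two dynamic bounds of Lemma~\ref{lemme_prel_1} with $\xi = \xi_n \downarrow 0$ slowly enough that, using $\rho_n n \geq C \log n$, Borel--Cantelli gives, almost surely and uniformly over all $\mathbf{\bar z}_n^{1:T} \in [k]^{nT}$,
\begin{align*}
\frac{o_{ab}(\mathbf{\bar z}_n^t, A_{n\times n}^t)}{\rho_n n^2} &= [Q_n^t S^{0,t} (Q_n^t)^{\intercal}]_{ab} + o(1), \quad a \neq b, \\
\frac{\sum_{t=1}^T o_{aa}(\mathbf{\bar z}_n^t, A_{n\times n}^t)}{\rho_n n^2} &= \tfrac{1}{2}\sum_{t=1}^T [Q_n^t S^{0,t} (Q_n^t)^{\intercal}]_{aa} + o(1),
\end{align*}
where $Q_n^t := Q_n(\mathbf{\bar z}_n^t, \mathbf{Z}_n^t)$. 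Specializing to $\mathbf{\bar z}_n^t = \mathbf{z}_{n,k}^{\star,t}$ reduces the analysis to a statement purely in terms of the confusion matrices and the fixed $S^{0,t}$.

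Next, I would extract limit points. Each $Q_n^t$ lies in the compact set $\{Q \in [0,1]^{k\times k_0} : \|Q\|_1 = 1\}$, so passing to a subsequence we may assume $Q_n^t \to R^{\star,t}$ for every $t \in \{1,\dots,T\}$. Stationarity of the Markov chain implies that $n_{a'}(\mathbf{Z}_n^t)/n \to \alpha^0_{a'}$ almost surely, independently of $t$; since $Q_n^{t,\intercal}\mathbf{1}_k$ equals this empirical distribution, the column sums of $R^{\star,t}$ coincide with $\alpha^0$ for every $t$, and I set $\alpha^\star_a := [R^{\star,t}\mathbf{1}_{k_0}]_a$ which we will show is in fact time-independent.

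The main obstacle is to argue that the matrices $R^{\star,t}$ are all equal to a common $R^\star$ (after fixing a global permutation of $[k]$) and moreover that $R^\star$ has exactly one non-zero entry per column, i.e.\ corresponds to a deterministic coarsening $\sigma:[k_0]\to[k]$. The ``hard clustering'' property is obtained as in the MLSBM case by Jensen/strict concavity of $\tau$: if some column of $R^{\star,t}$ were split, one could replace the fractional assignment by sending the whole column to its majority row and strictly increase the limiting integrated likelihood, contradicting the optimality of $\mathbf{z}_{n,k}^{\star,t}$. The time-consistency across $t$ is then enforced by two features of the dynamic model: (i) the identifiability constraint $P^{0,t}_{aa} = P^{0,t'}_{aa}$ built into $\Theta^{k,T}_{\dyn}$, which the profile likelihood respects through the pooled diagonal estimator, and (ii) the transition factor $\prod_{a,b} \pi_{ab}^{c_{ab}}$ in \eqref{vraiss_dyn}, which under the Markov/stationary structure penalizes any relabeling $\sigma_t$ at time $t$ that disagrees with $\sigma_{t-1}$. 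A standard exchange argument—swapping the labels assigned at time $t$ to restore consistency with time $1$ and checking that this can only increase the likelihood—shows that the common coarsening $R^\star$ suffices.

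Finally, I would substitute the limits. For $a \neq b$, continuity of $\tau$ on $[0,\infty)$ and the expressions $\frac{n_{ab}(\mathbf{z}_{n,k}^{\star,t})}{n^2} \to \alpha^\star_a \alpha^\star_b$ and $\frac{o_{ab}(\mathbf{z}_{n,k}^{\star,t}, A_{n\times n}^t)}{\rho_n n_{ab}(\mathbf{z}_{n,k}^{\star,t})} \to S^{\star,t}_{ab}$ with $S^{\star,t}_{ab}$ as in \eqref{def_S_star_alpha_star} yield the first inequality. For the diagonal, since $P^{0,t}_{aa}$ is constant in $t$, the pooled ratio $\sum_t o_{aa}/\sum_t \rho_n n_{aa}$ converges to $S^\star_{aa}$ as defined in \eqref{def_S_star_alpha_star}, and $\sum_t n_{aa}(\mathbf{z}_{n,k}^{\star,t})/n^2 \to \frac{T}{2}(\alpha^\star_a)^2$, which combined gives the second bound. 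The $\limsup$ form follows because all convergences above may be taken along an arbitrary subsequence, so the inequality holds for every limit point and in particular for the superior one.
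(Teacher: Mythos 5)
There is a genuine gap. The lemma only asserts an upper bound on a $\limsup$, and the paper's route is correspondingly one-sided: after the concentration step (Lemma \ref{lemme_prel_1}), the empirical functional evaluated at $\mathbf{z}_{n,k}^{\star,t}$ is \emph{bounded above} by a supremum over all confusion matrices with the empirical column sums, Lemma \ref{lemme_technique} transports this supremum to the limit polyhedron $\{R:\|R\|_1=1,\ R^{\intercal}\mathbf{1}_k=\alpha\}$, and only then does the one-nonzero-entry-per-column matrix $R^{\star}$ appear, as the vertex at which the supremum of the \emph{convex} objective (built from the convex function $\tau$) is attained. Your proposal instead tries to prove that the actual confusion matrices $Q_n(\mathbf{z}_{n,k}^{\star,t},\mathbf{Z}_n^t)$ converge (along subsequences) to a common hard-assignment matrix $R^{\star}$, and that the empirical ratios converge exactly to $\alpha^{\star}_a\alpha^{\star}_b$ and $S^{\star,t}_{ab}$. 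These claims are strictly stronger than the lemma and are not established by your sketch: the ``Jensen/strict concavity of $\tau$'' step is off ($\tau(u)=u\log u-u$ is convex, and convexity is precisely why the \emph{variational bound} is maximized at a vertex; it says nothing about where the data-dependent argmax puts its confusion matrix), and the contradiction-with-optimality and exchange arguments conflate the per-layer functional of this lemma with the quantity $\mathbf{z}_{n,k}^{\star}$ actually maximizes, namely the full profile likelihood including the Markov transition factor, the pooled diagonal term and the $\log\rho_n$ corrections. Identifying subsequential limits of $Q_n^t$ with the variational maximizer would require a matching lower bound that you neither state nor prove, and the asserted time-consistency of the limits across $t$ is likewise unproved and unnecessary.

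The proof can be salvaged by deleting the hard-clustering and time-consistency claims: keep the concentration step, note that every subsequential limit point $R^{\star,t}$ of $Q_n(\mathbf{z}_{n,k}^{\star,t},\mathbf{Z}_n^t)$ is feasible (unit $\ell_1$ norm, column sums $\alpha$, which follows from stationarity), bound the limiting value at that feasible point by the supremum over the polyhedron (this is what Lemma \ref{lemme_technique} formalizes, uniformly in $n$), and invoke convexity to place the supremum at a vertex, which \emph{defines} $R^{\star}$, $\alpha^{\star}$ and $S^{\star,t}$; the pooled diagonal term is treated the same way with the objective $\sum_a[R\mathbf{1}_{k_0}]_a^2\,\tau\bigl(\sum_t[RS^{0,t}R^{\intercal}]_{aa}/(T[R\mathbf{1}_{k_0}]_a^2)\bigr)$. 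One further small point: substituting the concentration estimate inside $\tau$ requires controlling the arguments away from degenerate denominators (the paper restricts $\tau$ to a compact interval $[p_{\min},p_{\max}]$ with $p_{\min}>0$); ``continuity of $\tau$ on $[0,\infty)$'' alone does not cover ratios whose denominators $[Q_n\mathbf{1}_{k_0}]_a[Q_n\mathbf{1}_{k_0}]_b$ may vanish.
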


 The proof is similar to the proof of Lemma \ref{lemme_prel_2} and it is given in Appendix \ref{preuve_lemme_prel_2_dyn}.

 \begin{lemme} [Lower bound on the log-maximum-likelihood ratio - dynSBM]  \label{lemme_prel_3_dyn}
Consider a dynamic SBM with $k_{0}$ communities and parameters  $\left(\Pi^{0}, \Pvec^{0}\right)$, with $\Pvec^{0}=\rho_n \Svec^0$ with $\Svec^0$ not depending on $n$, with $\rho_n \geq C \log n/n$ and $\rho_n \rightarrow 0$, and with $\alpha$ the initial stationary distribution of the Markov chain. Assume that there exist a $t$ such that $S^{0,t}$ has no two identical columns. Then for $k = k_{0}-1$ and $(\alpha^{\star}, S^{\star,t})$ as in Lemma \ref{lemme_prel_2_dyn} we have that 
\begin{align*}
    &\frac{1}{2} \sum_{t=1}^T   \sum_{1 \leq a\neq b \leq k_{0}} \alpha_{a} \alpha_{b} \tau \left(S^{0,t}_{ab}\right)  + \frac{T}{2}  \sum_{1 \leq a \leq k_{0}}  \alpha_{a}^2 \tau \left(S^{0}_{aa}\right) \\
    &- \frac{1}{2} \sum_{t=1}^T\sum_{1 \leq a \neq b \leq k} \alpha_{a}^{\star} \alpha_{b}^{\star} \tau \left(S^{\star,t}_{ab}\right) - \frac{T}{2}\sum_{1 \leq a \le k}  (\alpha_{a}^{\star})^2  \tau \left(S^{\star}_{aa}\right)>0.
\end{align*}
 \end{lemme}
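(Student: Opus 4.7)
The approach is to mirror the proof of Lemma~\ref{lemme_prel_3}, exploiting strict convexity of $\tau(u)=u\log u-u$ on $(0,\infty)$ together with Jensen's inequality, with additional bookkeeping for the diagonal coefficient $S^\star_{aa}$, which is defined in \eqref{def_S_star_alpha_star} as a time-average (this reflects the identifiability constraint $P^{t}_{aa}=P^{t'}_{aa}$ of the dynamic model and has no counterpart in the multi-layer setting).

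First I would identify the combinatorial structure of $R^\star$. Since each column has exactly one non-zero entry, there is a surjection $\phi:\{1,\dots,k_0\}\to\{1,\dots,k\}$ with $R^\star_{aa'}\neq 0$ iff $\phi(a')=a$; the normalizations $\|R^\star\|_1=1$ and $[R^\star\mathbf{1}_{k_0}]_a=\alpha^\star_a$ then force $R^\star_{aa'}=\alpha_{a'}\mathds{1}\{\phi(a')=a\}$, exactly as in the proof of Lemma~\ref{lemme_prel_3}. Because $k=k_0-1$, by pigeonhole exactly one merged class $a_0$ satisfies $|\phi^{-1}(a_0)|=2$; write $\phi^{-1}(a_0)=\{a'_1,a'_2\}$. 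Using $S^{0,t}_{a'a'}=S^0_{a'a'}$ (constant in $t$ by the true-model identifiability constraint), the left-hand side of the lemma may be rewritten as the single difference
\begin{equation*}
D=\tfrac{1}{2}\sum_{t=1}^T\sum_{a',b'=1}^{k_0}\alpha_{a'}\alpha_{b'}\tau(S^{0,t}_{a'b'})-\tfrac{1}{2}\Bigl(\sum_{t=1}^T\sum_{a\neq b}\alpha^\star_a\alpha^\star_b\tau(S^{\star,t}_{ab})+T\sum_a(\alpha^\star_a)^2\tau(S^\star_{aa})\Bigr).
\end{equation*}

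I would then bound the second bracket by Jensen's inequality in two complementary parts. For each off-diagonal pair $(a,b)$ and each $t$, writing $S^{\star,t}_{ab}$ as the convex combination of $\{S^{0,t}_{a'b'}:a'\in\phi^{-1}(a),\,b'\in\phi^{-1}(b)\}$ with weights $\alpha_{a'}\alpha_{b'}/(\alpha^\star_a\alpha^\star_b)$ yields $\alpha^\star_a\alpha^\star_b\tau(S^{\star,t}_{ab})\leq\sum\alpha_{a'}\alpha_{b'}\tau(S^{0,t}_{a'b'})$. For each diagonal pair, set $\widetilde S^{\,t}_{aa}:=[R^\star S^{0,t}(R^\star)^{\intercal}]_{aa}/(\alpha^\star_a)^2$ so that $S^\star_{aa}=\tfrac{1}{T}\sum_t\widetilde S^{\,t}_{aa}$ by \eqref{def_S_star_alpha_star}, and apply Jensen twice: first over $t$, $T\tau(S^\star_{aa})\leq\sum_t\tau(\widetilde S^{\,t}_{aa})$, and then, inside each $t$, over $(a',b')\in\phi^{-1}(a)^2$, $(\alpha^\star_a)^2\tau(\widetilde S^{\,t}_{aa})\leq\sum_{a',b'\in\phi^{-1}(a)}\alpha_{a'}\alpha_{b'}\tau(S^{0,t}_{a'b'})$. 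Summing all these bounds, and observing that $\{\phi(a')=\phi(b')\}$ and $\{\phi(a')\neq\phi(b')\}$ partition the full set of index pairs, exactly reconstitutes the first sum, so $D\geq 0$.

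The main obstacle is promoting this to a strict inequality via the hypothesis that some $S^{0,t_0}$ has no two identical columns. Columns $a'_1$ and $a'_2$ of $S^{0,t_0}$ then differ at some row $c^\star$, and two sub-cases arise. If $c^\star\notin\{a'_1,a'_2\}$, set $c:=\phi(c^\star)\neq a_0$: the distinct values $S^{0,t_0}_{c^\star,a'_1}$ and $S^{0,t_0}_{c^\star,a'_2}$ both carry positive weight in the convex combination defining $S^{\star,t_0}_{a_0,c}$, so strict convexity of $\tau$ makes the off-diagonal Jensen bound at $(a_0,c,t_0)$ strict. Otherwise $c^\star\in\{a'_1,a'_2\}$, and the symmetry of $S^{0,t_0}$ forces at least two of the four entries $\{S^{0,t_0}_{a'_i,a'_j}\}_{i,j\in\{1,2\}}$ to differ, rendering the inner Jensen bound on $(\alpha^\star_{a_0})^2\tau(\widetilde S^{\,t_0}_{a_0 a_0})$ strict---a strictness preserved by the outer Jensen over $t$. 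In either case at least one inequality in the chain bounding the second bracket is strict, so $D>0$, as required.
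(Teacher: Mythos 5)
Your proposal is correct and follows essentially the same route as the paper's proof: identify the single merged pair induced by $R^{\star}$, bound the off-diagonal merged terms and the time-averaged diagonal term by two applications of the log-sum/Jensen inequality (your Jensen step for $\tau$ is equivalent to the paper's log-sum inequality since the linear part of $\tau$ cancels in convex combinations), and obtain strictness from the hypothesis that some $S^{0,t_0}$ has no two identical columns. Your explicit case split on whether the differing row lies inside or outside the merged pair is just a reformulation of the paper's equality conditions \eqref{condition_1_s_dyn}--\eqref{condition_2_s_dyn}, so no substantive difference.
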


The proof is similar to the proof of Lemma \ref{lemme_prel_3} and it is given in Appendix \ref{preuve_lemme_prel_3_dyn}.
We are now able to prove that $\hat{k}_{KT}$ does not underestimate $k_0$.

\begin{prop} [Non-underestimation] 
Assume that $(\mathbf{Z}_{n},\AnT)$ is a sample of size $n$ from a multi-layer SBM of order $k_{0}$ with $T$ layers  with parameters $\left(\pi^{0}, \Pvec^{0}\right)$, (resp. $(\ZnT,\AnT)$ a sample of size $n$ from a dynamic SBM of order $k_{0}$ with parameters  $\left(\Pi^{0}, \Pvec^{0}\right)$, and with $\alpha$ the initial stationary distribution of the Markov chain), with $ \Pvec^{0} = \boldsymbol \rho_{n} \Svec^{0}$ with $\Svec^{0}$ not depending on $n$, and $\boldsymbol \rho_{n} =(\rho_{n}^{1}, \dots, \rho_{n}^{T})$ (resp. $\rho_n$ not depending on t) with $\rho_n^t \geq C\log n/n$ and  $\rho_{n}^{t} \rightarrow 0$ for all $t \in \{1,\dots,T\}$. Then, the $\hat{k}_{K T}\left(\AnT\right)$ order estimator does not underestimate $k_{0}$, eventually almost surely when $n \rightarrow \infty$.
\end{prop}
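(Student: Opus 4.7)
\medskip
\noindent\textbf{Proof proposal.} The plan is to show, for every fixed $k \in \{1,\dots,k_0-1\}$, that $\log \mathbf{KT}_{k_0}^T(\AnT) - \pen_\ML(k_0, n, T) > \log \mathbf{KT}_k^T(\AnT) - \pen_\ML(k, n, T)$ eventually almost surely, and then take a union bound over the finite set $\{1,\dots,k_0-1\}$. I focus on the MLSBM; the DynSBM case is analogous with Lemmas \ref{lemme_prel_2_dyn}, \ref{lemme_prel_3_dyn} in place of Lemmas \ref{lemme_prel_2}, \ref{lemme_prel_3} and with the initial-configuration conditioning kept throughout the KT integral.

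The first step is to upper bound $\log \mathbf{KT}_k^T(\AnT)$. Proposition~\ref{propfond} yields $\log \mathbf{KT}_k^T(\AnT) \le \log \sup_\theta \mathbb{P}_\theta(\AnT)$. Passing to the complete likelihood via
\begin{equation*}
\mathbb{P}_\theta(\AnT) = \sum_{\mathbf{z}_n \in [k]^n} \mathbb{P}_\theta(\mathbf{z}_n,\AnT) \le k^n \sup_{\mathbf{z}_n}\mathbb{P}_\theta(\mathbf{z}_n,\AnT)
\end{equation*}
introduces only an $n\log k$ overhead. The inner supremum is attained at the profile likelihood estimator $\mathbf{z}_{n,k}^\star$ from \eqref{def_z_etoile} together with the closed-form MLE $\hat{P}_{ab}^t = \tilde{o}_{ab}^t/(n_a n_b)$. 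Rewriting the resulting log-likelihood through the Bernoulli density $\gamma$ and, in the sparse regime, using the expansion $\gamma(\rho u) = \rho[\tau(u) + u\log\rho] + o(\rho)$, one finds that it takes the form
\begin{equation*}
\rho_n n^2 \cdot \tfrac{1}{2}\sum_{a,b,t} \tfrac{n_a n_b}{n^2}\tau\!\Bigl(\tfrac{\tilde{o}_{ab}^t}{\rho_n^t n_a n_b}\Bigr) + L_n + o(\rho_n n^2),
\end{equation*}
where $L_n$ is a $\log \rho_n$-correction that depends only on the total edge count $\sum_{a,b}\tilde{o}_{ab}^t$.

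The second step is to lower bound $\log \mathbf{KT}_{k_0}^T(\AnT)$ by the left inequality of Proposition~\ref{propfond} and by $\log \mathbb{P}_{\theta^0}(\mathbf{Z}_n,\AnT) - O(\log n)$, where $\mathbf{Z}_n$ is the true latent configuration. Applying Lemma~\ref{lemme_prel_1} on the true partition, the same expansion gives $\rho_n n^2 \cdot \tfrac{1}{2}\sum_{a,b,t}\pi^0_a\pi^0_b \tau(S^{0,t}_{ab}) + L_n + o(\rho_n n^2)$ with the \emph{same} $L_n$. The key observation is that the $L_n$-correction cancels between the two sides because identity~\eqref{def_S_star_pi_star} combined with $R^{\star\intercal}\mathbf{1}_k = \pi^0$ (as $R^\star$ has one non-zero entry per column and $\|R^\star\|_1=1$) yields
\begin{equation*}
\sum_{a,b}\pi^\star_a\pi^\star_b S^{\star,t}_{ab} = \mathbf{1}^\intercal R^\star S^{0,t}(R^\star)^\intercal \mathbf{1} = \sum_{a,b}\pi^0_a\pi^0_b S^{0,t}_{ab},
\end{equation*}
so the total normalized edge density is preserved under the relabelling matrix $R^\star$.

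Combining these bounds with Lemma~\ref{lemme_prel_2} (to pass to the $\limsup$ on the $k$-side) and Lemma~\ref{lemme_prel_3} (which provides a strictly positive asymptotic gap $\delta > 0$ under Assumption~\ref{ass1}), one obtains
\begin{equation*}
\log\mathbf{KT}_{k_0}^T(\AnT) - \log \mathbf{KT}_k^T(\AnT) \ge \delta\,\rho_n n^2\,(1+o(1)) - n\log k - O(\log n).
\end{equation*}
Since $n\rho_n \ge C\log n$ gives $\rho_n n^2 \ge C n\log n$, which dominates $n\log k$, the $O(\log n)$ from Proposition~\ref{propfond}, and the penalty difference $|\pen_\ML(k_0)-\pen_\ML(k)| = O(\log n)$, non-underestimation follows. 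The main difficulty is precisely the cancellation of $L_n$: it is of order $\rho_n n^2 \log \rho_n^{-1}$ and would by itself swamp the $\delta\rho_n n^2$ gap, so the marginal identity above is what makes the whole argument close. In the dense regime, $L_n$ disappears ($\rho_n\equiv 1$ and $\gamma$ replaces $\tau$), so this subtlety is absent and the proof is strictly simpler.
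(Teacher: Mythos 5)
Your architecture is essentially the paper's: reduce the comparison of penalized KT scores to a comparison of maximized likelihoods via Proposition~\ref{propfond} (the penalty difference and the $O(\log n)$ slack being negligible against $\rho_n n^2 \geq C n\log n$), bound the $k$-model likelihood by $k^n$ times the profile likelihood at $\mathbf{z}_{n,k}^{\star}$, expand $\gamma(\rho u)$ to isolate a leading $\tau$-term of order $\rho_n n^2$ plus a $\log\rho_n$ correction, and conclude with Lemmas~\ref{lemme_prel_1}, \ref{lemme_prel_2} and \ref{lemme_prel_3} (and their dynamic counterparts). However, your justification of the $L_n$ cancellation is not the one that actually works. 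The identity $\sum_{a,b}\pi^{\star}_a\pi^{\star}_b S^{\star,t}_{ab}=\sum_{a,b}\pi^0_a\pi^0_b S^{0,t}_{ab}$ is true, but it is a statement about the \emph{limits} of the normalized coefficients of $\log\rho_n^t$; Lemma~\ref{lemme_prel_1} controls each coefficient only up to an error $\epsilon_n$ of order roughly $(\rho_n n)^{-1/2}$, and $\epsilon_n \cdot \rho_n n^2\log(1/\rho_n)$ need not be $o(\rho_n n^2)$ (with $\rho_n n \asymp \log n$ it can be of order $\rho_n n^2\sqrt{\log n}$), so an asymptotic matching of the coefficients cannot beat the gap $\delta\rho_n n^2$. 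What makes the correction vanish — and what the paper uses, by adding and subtracting exactly this quantity — is the exact combinatorial fact that $\sum_{a,b}\tilde o_{ab}(\mathbf{z},A^t_{n\times n})=\sum_{i\neq j}a^t_{ij}$ for \emph{every} labeling $\mathbf{z}$, so the $\log\rho_n^t$-weighted terms under $\mathbf{Z}_n$ and $\mathbf{z}_{n,k}^{\star}$ are literally identical. You state the right fact ("$L_n$ depends only on the total edge count"), but the "marginal identity" you designate as the key observation is neither needed nor, on its own, sufficient.

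Second, your per-$k$ strict gap invokes Lemma~\ref{lemme_prel_3}, which is stated only for $k=k_0-1$. For $k<k_0-1$ the paper telescopes the log-likelihood ratio through $k_0-1$ and uses that the maximized likelihood is nondecreasing in the number of communities, so the extra term is nonnegative; without this step (or an extension of Lemma~\ref{lemme_prel_3} to all $k<k_0$), your union bound over $k\in\{1,\dots,k_0-1\}$ is not justified as written. Two smaller points: the lower bound $\log \mathbf{KT}^T_{k_0}(\AnT)\geq \log\sup_{\theta}\mathbb{P}_{\theta}(\AnT)-O(\log n)$ uses the \emph{second} inequality of Proposition~\ref{propfond}, not the first; and the $\pi$-part of the complete likelihood (the paper's $\mathcal{T}(n)$, of order $n$) should be mentioned as absorbed into your negligible terms, which is fine since $n=o(\rho_n n^2)$.
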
 

 \begin{proof} 
 
We start with case of the multi-layer SBM. Following the approach of \cite{CL20} in the proof of Proposition 6, to prove that $\hat{k}_{\mathbf{KT}}\left(\AnT\right)$ does not underestimate $k_{0}$ it is enough to show that for all $k<k_{0}$
\begin{equation*}
   \liminf _{n \rightarrow \infty} \frac{1}{\underset{1\leq t \leq T}{\min} \rho_{n}^{t} n^{2}}  \log \frac{\sup _{(\pi, \Svec) \in \Theta^{k_{0},T}_\ML} \mathbb{P}_{\pi,\boldsymbol \rho_{n} \Svec}\left(\AnT\right)}{\sup _{(\pi, \Svec) \in \Theta^{k,T}_\ML} \mathbb{P}_{\pi, \boldsymbol\rho_{n} \Svec}\left(\AnT\right)} > 0.
\end{equation*} 
We start with $k=k_0-1$. Using that the maximum likelihood estimators, in the complete model, for $\pi_{a}$ and $S_{a, b}^{t}$ are given by $n_{a}/n$ and $o_{a, b}^{t}/n_{a, b}$ respectively, and using $\mathbf{z}_{n,k}^{\star}$ defined in \eqref{def_z_etoile} we obtain that
\begin{align}  \label{avant_div_s_ann}
& \log  \frac{\sup _{(\pi, \Svec) \in \Theta^{k_{0},T}_\ML} \mathbb{P}_{\pi, \boldsymbol \rho_{n} \Svec}\left(\AnT\right)}{\sup _{(\pi, \Svec) \in \Theta^{k,T}_\ML} \mathbb{P}_{\pi,\boldsymbol \rho_{n} \Svec}\left(\AnT\right)}  \geq \mathcal{T}(n) \nonumber\\&
  + \frac{1}{2}\sum_{t=1}^{T}  \Biggl( \sum_{1 \leq a,b \leq k_{0}} \frac{n_{a}(\mathbf{Z}_{n}) n_{b}(\mathbf{Z}_{n})}{\rho_n^t} \gamma  \left( \rho_n^t \frac{\tilde{o}_{a, b}^{t}(\mathbf{Z}_{n}, \mathbf{A}_{n \times n}^{t})}{n_{a}(\mathbf{Z}_{n}) n_{b}(\mathbf{Z}_{n})}\right) \nonumber\\
&  - \sum_{1 \leq a,b \leq k} \frac{n_{a}(\mathbf{z}_{n,k}^{\star}) n_{b}(\mathbf{z}_{n,k}^{\star})}{ \rho_n^t} \gamma \left( \rho_n^t \frac{\tilde{o}_{a, b}^{t}(\mathbf{z}_{n,k}^{\star}, \mathbf{A}_{n \times n}^{t})}{n_{a}(\mathbf{z}_{n,k}^{\star}) n_{b}(\mathbf{z}_{n,k}^{\star})}\right) \Biggr) ,
\end{align}
with \begin{align*}
\mathcal{T}(n)=n\sum_{a=1}^{k_{0}} &\hat{\pi}_{a}(\mathbf{Z}_{n}) \log \hat{\pi}_{a}(\mathbf{Z}_{n}) \\ &- n \log k - n\sum_{a=1}^{k} \tilde{\pi}_{a}(\mathbf{z}_{n,k}^{\star}) \log \tilde{\pi}_{a}(\mathbf{z}_{n,k}^{\star}),
\end{align*}
and 
\begin{equation*} \gamma (u) = u \log u + (1-u) \log(1-u),\end{equation*}
and with $\hat{\pi}_{a}(\mathbf{Z}_{n}) = n_{a}(\mathbf{Z}_{n})/n$, for $1 \leq a \leq k_{0}$, and $\tilde{\pi}_{a}( \mathbf{z}_{n,k}^{\star}) = n_{a}( \mathbf{z}_{n,k}^{\star})/n$, for $1 \leq a \leq k$. It is reminded that for all  $1 \leq a,b \leq k$, the counter $\tilde{o}_{a b}$ is as defined in \eqref{o_tilde}.
For $\rho_{n} \rightarrow 0$ we have that
\begin{align*}
\gamma\left(\rho_{n} x\right)&=\rho_{n}(x \log x-x)+x \rho_{n} \log \rho_{n}+O\left(\rho_{n}^{2} x^{2}\right)\\
&=\rho_n \tau(x) + x \rho_n \log(\rho_n)+ O\left(\rho_{n}^{2} x^{2}\right),
\end{align*}
where $\tau(u)= u \log u -u$.
Therefore
\begin{align*}
&\sum_{t=1}^{T} \sum_{1 \leq a \leq b \leq k_0} \frac{n_{a}(\mathbf{Z}_{n}) n_{b}(\mathbf{Z}_{n})}{ \rho_n^t} \gamma \left( \rho_n^t \frac{\tilde{o}_{a, b}^{t}(\mathbf{Z}_{n}, \mathbf{A}_{n \times n}^{t})}{n_{a}(\mathbf{Z}_{n}) n_{b}(\mathbf{Z}_{n})}\right)
\\& =\sum_{t=1}^{T} \sum_{1 \leq a \leq b \leq k_0}  n_{a}(\mathbf{Z}_{n}) n_{b}(\mathbf{Z}_{n}) \tau\left(\frac{\tilde{o}_{a, b}^{t}(\mathbf{Z}_{n}, \mathbf{A}_{n \times n}^{t})}{n_{a}(\mathbf{Z}_{n}) n_{b}(\mathbf{Z}_{n})}\right) \\
& \qquad+ \sum_{t=1}^{T} E_{n}^{t} \log \rho_{n}^{t}+O \left( \left(\rho_n^t \frac{\tilde{o}_{a, b}^{t}(\mathbf{Z}_{n}, \mathbf{A}_{n \times n}^{t})}{n_{a}(\mathbf{Z}_{n}) n_{b}(\mathbf{Z}_{n})}\right) ^2 \right), 
\end{align*}
where $E_{n}^{t}=\sum_{1 \leq a \leq b \leq k_{0}} \tilde{o}_{a b}^{t}\left(\mathbf{Z}_{n}, \mathbf{A}_{n \times n}^{t}\right)$ is the total number of edges in the graph. Then, dividing both sides of \eqref{avant_div_s_ann} by $\underset{1\leq t \leq T}{\min} \rho_{n}^{t} n^{2}$, and summing on the right-hand side the following term (which equals 0 since the total number of edges in the graph does not depend on $\mathbf{Z}_n$)
\begin{align*}
&\frac{1}{\underset{1\leq t \leq T}{\min} \rho_{n}^{t} n^{2}} \times\\
&\frac{1}{2} \sum_{t=1}^{T} \Biggl( \sum_{1 \leq a, b \leq k_0} \tilde{o}_{ab}(\mathbf{Z}_{n}, {A}_{n \times n}^{t}) (\log (1/ \rho_n^t) + 1- 1/ \rho_n^t) \\ &- \sum_{1 \leq a, b \leq k} \tilde{o}_{ab}(\mathbf{z}_{n}^{\star}, {A}_{n \times n}^{t}) (\log (1/ \rho_n^t) + 1- 1/ \rho_n^t) \Biggr),
\end{align*}
and since $\mathcal{T}(n)/ \underset{1\leq t \leq T}{\min} \rho_{n}^{t} n^{2}$ converges almost surely to 0 because the $\hat \pi \log \hat \pi $ and $\tilde \pi \log \tilde \pi$ are bounded, 
to obtain the non-underestimation it is sufficient to prove that 
\begin{align*}
   & \liminf_{n \rightarrow \infty} \frac{1}{\underset{1\leq t \leq T}{\min} \rho_{n}^{t} n^{2}} \times \\
   &\sum_{t=1}^{T}  \Biggl(  \sum_{1 \leq a,b \leq k_{0}} \rho_n^t n_{a}(\mathbf{Z}_{n}) n_{b}(\mathbf{Z}_{n}) \tau \left( \frac{\tilde{o}_{a, b}^{t}(\mathbf{Z}_{n}, \mathbf{A}_{n \times n}^{t})}{\rho_n^t n_{a}(\mathbf{Z}_{n}) n_{b}(\mathbf{Z}_{n})}\right) \\
    & - \sum_{1 \leq a,b \leq k} \rho_n^t n_{a}(\mathbf{z}_{n,k}^{\star}) n_{b}(\mathbf{z}_{n,k}^{\star}) \tau \left(\frac{\tilde{o}_{a, b}^{t}(\mathbf{z}_{n,k}^{\star}, \mathbf{A}_{n \times n}^{t})}{ \rho_n^t n_{a}(\mathbf{z}_{n,k}^{\star}) n_{b}(\mathbf{z}_{n,k}^{\star})}\right) \Biggr) >0.
\end{align*}
And since for all $1 \leq t \leq T$ we have that $\rho_n^t \geq \underset{1\leq t \leq T}{\min} \rho_{n}^{t}  $ and  $\liminf_n \sum_t \geq \sum_t \liminf_n$, it is enough to prove that for all $1 \leq t \leq T$
\begin{align*}
   & \liminf_{n \rightarrow \infty} \sum_{1 \leq a,b \leq k_{0}} \frac{n_{a}(\mathbf{Z}_{n}) n_{b}(\mathbf{Z}_{n})}{ n^{2}}  \tau \left(\frac{\tilde{o}_{a, b}^{t}(\mathbf{Z}_{n}, \mathbf{A}_{n \times n}^{t})}{\rho_n^t n_{a}(\mathbf{Z}_{n}) n_{b}(\mathbf{Z}_{n})}\right) \\ &-  \sum_{1 \leq a,b \leq k} \frac{n_{a}(\mathbf{z}_{n,k}^{\star}) n_{b}(\mathbf{z}_{n,k}^{\star})}{ n^{2}} \tau \left( \frac{\tilde{o}_{a, b}^{t}(\mathbf{z}_{n,k}^{\star}, \mathbf{A}_{n \times n}^{t})}{\rho_n^t n_{a}(\mathbf{z}_{n,k}^{\star}) n_{b}(\mathbf{z}_{n,k}^{\star})}\right) \geq0, 
\end{align*}
and that there exists $1 \leq t \leq T$ such that the inequality is strict.
By Lemma \ref{lemme_prel_1} we have that
\begin{align} \label{en_k0_s}
    \lim_{n\rightarrow \infty} \frac{1}{2} &\sum_{1 \leq a,b \leq k_{0}} \frac{n_{a}(\mathbf{Z}_{n}) n_{b}(\mathbf{Z}_{n})}{ n^{2}} \tau \left(\frac{\tilde{o}_{a, b}^{t}(\mathbf{Z}_{n}, \mathbf{A}_{n \times n}^{t})}{\rho_n^t n_{a}(\mathbf{Z}_{n}) n_{b}(\mathbf{Z}_{n})}\right) \nonumber \\
    & \hspace{2cm}=  
    \frac{1}{2}  \sum_{1 \leq a,b \leq k_{0}} \pi_{a} \pi_{b} \tau \left(S^{0,t}_{ab}\right).
\end{align}
On the other hand, by Lemma \ref{lemme_prel_2} we have that
\begin{align} \label{en_k_s}
    \limsup_{n\rightarrow \infty} \frac{1}{2}  & \sum_{1 \leq a,b \leq k} \frac{n_{a}(\mathbf{z}_{n,k}^{\star}) n_{b}(\mathbf{z}_{n,k}^{\star})}{ n^{2}} \tau \left(\frac{\tilde{o}_{a, b}^{t}(\mathbf{z}_{n,k}^{\star}, \mathbf{A}_{n \times n}^{t})}{\rho_n^t n_{a}(\mathbf{z}_{n,k}^{\star}) n_{b}(\mathbf{z}_{n,k}^{\star})}\right)\nonumber  \\
    & \hspace{2cm} \leq
    \frac{1}{2} \sum_{1 \leq a,b \leq k} \pi_{a}^{\star} \pi_{b}^{\star} \tau \left(S^{\star,t}_{ab}\right),
\end{align}
for some matrix $S^{\star,t}$ in $[0,1]^{k\times k}$ and $k$-dimensional vector $\pi^{\star}$ defined by \eqref{def_S_star_pi_star}. Finally, by Lemma \ref{lemme_prel_3} we have that the difference of \eqref{en_k_s} and \eqref{en_k0_s} is lower bounded by 
\begin{align*}
    \frac{1}{2} \left(\sum_{1 \leq a,b \leq k_{0}} \pi_{a} \pi_{b} \tau \left(S^{0,t}_{ab}\right) -  \sum_{1 \leq a,b \leq k} \pi_{a}^{\star} \pi_{b}^{\star} \tau \left(S^{\star,t}_{ab}\right) \right) \geq0
\end{align*}
with a strict inequality if $S^{0,t}$ doesn't have two identical columns (there exists such a $t$ by definition of the model). Then we have proven the proposition for $k=k_0-1$. To conclude the proof for the multi-layer SBM, let $k<k_0-1$ and write
\begin{align}\label{k<k_0-1 sparse}
   & \log \frac{\sup _{(\pi, \Svec) \in \Theta^{k_{0},T}_\ML} \mathbb{P}_{\pi,\boldsymbol \rho_n\Svec}\left(\AnT\right)}{\sup _{(\pi, \Svec) \in \Theta^{k,T}_\ML} \mathbb{P}_{\pi,\boldsymbol \rho_n \Svec}\left(\AnT\right)} \nonumber \\
   & \hspace{2cm}= \log  \frac{\sup _{(\pi, \Svec) \in \Theta^{k_{0},T}_\ML} \mathbb{P}_{\pi,\boldsymbol \rho_n \Svec}\left(\AnT\right)}{\sup _{(\pi, \Svec) \in \Theta^{k_0-1,T}_\ML} \mathbb{P}_{\pi, \boldsymbol \rho_n\Svec}\left(\AnT\right)}  \\ \nonumber
   & \hspace{2cm}+ \log \frac{\sup _{(\pi, \Svec) \in \Theta^{k_{0}-1,T}_\ML} \mathbb{P}_{\pi,\boldsymbol \rho_n \Svec}\left(\AnT\right)}{\sup _{(\pi, \Svec) \in \Theta^{k,T}_\ML} \mathbb{P}_{\pi,\boldsymbol \rho_n \Svec}\left(\AnT\right)}.
\end{align}
The first term can be handled as before. The second term is non-negative because the maximum likelihood function is an increasing function of the dimension of the model and $k < k_0-1$. Then we have that \eqref{k<k_0-1 sparse} is strictly positive. This conclude the proof of the proposition for the multi-layer SBM.

\noindent For the dynamic SBM, the proof is quite similar. We just need to note that the maximum likelihood estimator, in the complete model, for $\pi_{ab}$, is given by $c_{ab}/n_a$, with $n_a=\sum_{t=1}^{T}n_a^t$, and for the matrices $S_{a b}^{t}$ for $a\neq b$, and $S_{aa}$, the maximum likelihood estimators are given 
 by $o_{a b}^{t}/n_{a b}^t$ and $ \sum_{t=1}^T o_{aa}^t/ \sum_{t=1}^T n_{aa}^t$ respectively. Then with the help of Lemma \ref{lemme_prel_1}, Lemma \ref{lemme_prel_2_dyn}, and Lemma \ref{lemme_prel_3_dyn} we obtain the result. 

\end{proof}

\section{Simulations} \label{simu}

The calculation of the penalized Krichevsky-Trofimov estimator is computationally infeasible due to the summation over all possible label configurations appearing in the likelihood. This necessitates an alternative computational approach to verify results. Consequently, we employ the Variational Bayes Expectation-Maximization (VBEM) algorithm as proposed in \cite{VBEM}, as this approach approximates the value of the Krichevsky-Trofimov estimator, up to the additional penalization term. This approach is implemented within the framework of the MLSBM; however, a VB implementation for the dynamic SBM has not yet been developed. In the MLSBM implementation, the computational cost of each VB update step follows the method of \citep{VBEM}, but the total runtime depends on the number of iterations required for convergence.
All the experiments presented in this article are reproducible, and the code used to conduct them is available at the following link: \url{https://github.com/arts-lucie/consistent-KT-estimator-in-the-MLSBM}. 

As in \cite{CL20}, we compare the performance of our estimator on simulated data with 3 other methods implemented in the R package \textit{randnet} \citep{randnet}. The first method is based on the penalized maximum likelihood (PML) and selects the number of communities with the function LRBIC, based on an asymptotic likelihood ratio, using the approach from \cite{wang_bickel}. The theoretical analysis of this method is conducted under maximum likelihood and variational Expectation–maximization algorithm, but as suggested in the paper, spectral clustering is used for community detection before fitting the maximum likelihood. The second one estimates the number of communities using the spectral properties of the network's Bethe-Hessian matrix \citep[BHMC,][]{BHMC}. The last one selects block models through network cross-validation \citep[NCV,][]{NCV}. Note that these 3 methods apply on single graphs, while ours processes a collection of graphs. Nonetheless our estimator generalizes the one in \citep{CL20} whose performances have already been shown equivalent and sometimes superior in the case of a single graph. In all the experiments, we fix the maximum value $K_{\text{max}}$ at 15.

We simulate $n_0=100$ multi-layer SBMs with $T=5$ layers and $k_0=6$ communities with uniform proportions. For each layer the connection probability matrix $P^{0,t}$ is defined as shown in Figure \ref{fig:matrice}, where $u_{1}$, $u_{2}$, $u_{3}$ and $u_{4}$ are independently and identically distributed according to a uniform distribution $\mathcal{U}(0.6,1)$. This represents a mix of communities and disassortative behaviour. 
We have tried different values of $\epsilon$ for the penalty calculation, ranging from $ 10^{-8}$ to $0.1$, in the various proposed experiments, and no significant differences in the results were observed.
 
\begin{figure}[!t]
    \centering
    \[
    \left[
    \begin{array}{ccc ccc}
    u_{1} & 0.4 & 0.4 & 0.2 & 0.2 & 0.2 \\
    0.4 & u_{2} & 0.4 & 0.2 & 0.2 & 0.2 \\
    0.4 & 0.4 & u_{3} & 0.2 & 0.2 & 0.2 \\
    0.2 & 0.2 & 0.2 & 0.4 & u_{4} & u_{4} \\
    0.2 & 0.2 & 0.2 & u_{4} & 0.4 & u_{4} \\
    0.2 & 0.2 & 0.2 & u_{4} & u_{4} & 0.4
    \end{array}
    \right]
    \]
    \caption{Matrix $P^{0,t}$ where $u_{1}$, $u_{2}$, $u_{3}$, and $u_{4}$ $\overset{\text{i.i.d.}}{\sim}$ $\mathcal{U}(0.6,1)$.}
    \label{fig:matrice}
\end{figure}

For others methods than the penalized KT estimator, the estimator is applied to each layer, and the number of correct results (selection of the correct number of clusters) is counted. To calculate the accuracy, the number of correct results is divided by the total number of observed graphs (i.e. the number of simulated graphs $n_s$ for the penalized KT estimator and  $n_s \times T = 500$ for the other estimators). While our approach outputs a unique $0/1$ accuracy per collection (the number of clusters is correctly estimated or not), the other 3 methods may show accuracies in $[0,1]$ per collection (the number of clusters may be correctly estimated for some of the $5$ graphs in each collection), this slightly favoring the latter.

\begin{figure}[!t]
    \centering
     \includegraphics[  height=5cm]{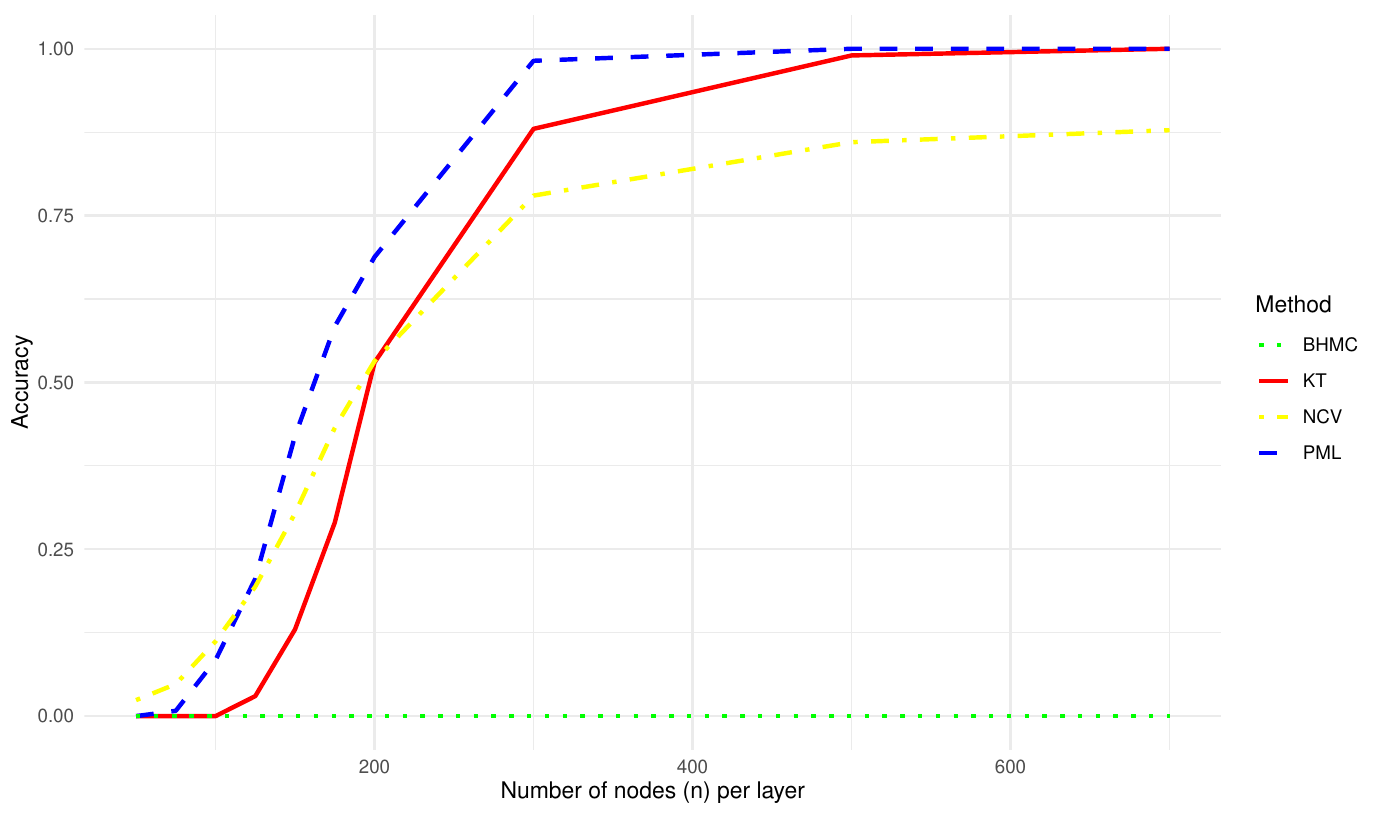} 
    \caption{Comparaison of the accuracy of different methods: the penalized Krichevsky-Trofimov estimator (KT), the penalized maximum likelihood (PML), the Bethe-Hessian matrix with moment correction (BHMC) and the network cross-validation method (NCV).}
    \label{fig:figure_comparaison_results}
\end{figure}

 Figure \ref{fig:figure_comparaison_results} presents the accuracy calculated over 100 collections of graphes of each estimator as a function of the number of nodes per layer for $n \in \{50,75,100,125,150,175,200,300,500,700\}$. In this scenario, the penalized KT estimator performs significantly better than the BHMC estimator, which fails to determine the correct number of clusters. Although it is initially slightly less accurate than the PML estimator, as the number of nodes increases both estimators achieve asymptotically perfect accuracy, making their residual error indistinguishable. Similarly, although it is slightly less effective than the NCV estimator when the number of nodes is small, it quickly surpasses it and remains superior.

To further assess our estimator's accuracy, we decide to test it in a sparse regime context. Data were simulated from a model with 4 layers and 3 communities, comprising 300 nodes per layers. The connection probability matrix $ P_0^{1:T} $ was defined as $ P_0^{1:T} = \rho S_0^{1:T} $, where $ S_0^t $ is a matrix such that $S_0^t = Id + \bf{1} + \epsilon_t $ where $\epsilon_t$ are i.i.d such that $\epsilon_t \sim \mathcal{U}([-0.1,0.1])$. The parameter $ \rho $ varied from 0.05 to 0.45, enabling an analysis of the estimator's accuracy over 100 trials at different levels of connection density. Table \ref{tab:sparse} presents a summary of these results as well as the results for the other estimators discussed previously.

\begin{table}[!t]
    \caption{Accuracy of the penalized KT estimator, the PML estimator, the BHMC estimator and the NCV estimator across varying values of $\rho$.}

    \centering
    \begin{tabular}{|c|c|c|c|c|c|c|c|c|c|c|}
        \hline
        $\rho$ & 0.05 & 0.10 & 0.15 & 0.20 & 0.25 & 0.30 & 0.35 & 0.40 & 0.45  \\
        \hline 
        KT & 0.00 & 0.06 &0.49 & 0.71 & 0.84 & 0.95 & \textbf{1.00} & \textbf{1.00} & \textbf{1.00}  \\
        \hline
        PML  & 0.00 &\textbf{0.83} & \textbf{1.00} &\textbf{1.00} &\textbf{1.00} &\textbf{1.00} &\textbf{1.00} &\textbf{1.00} &\textbf{1.00} \\
        \hline
        BHMC & 0.00 & \textbf{0.83} &  \textbf{1.00} &\textbf{1.00} &\textbf{1.00} &\textbf{1.00} &\textbf{1.00} &\textbf{1.00} &\textbf{1.00}  \\
        \hline
        NCV & 0.01 & 0.78 & 0.98 & 0.99 &0.99& 0.99 & 0.98 & 0.96 & 0.92 \\
        \hline
    \end{tabular}
    \label{tab:sparse}
\end{table} 

This table indicates that the accuracy of the penalized KT estimator, as well as that of the other estimators, improves significantly as $ \rho $ increases. We also observe the detection threshold, the limit below which groups cannot be estimated due to a lack of edges. The difference in efficiency between the penalized KT estimator and the PML estimator for small $\rho$ is probably due to the calculation of accuracy and the fact that the KT penalty might be too large in this case. However, we note that even with a not-too-large sparsity parameter, the penalized KT estimator already performs quite well in this context.

We also attempt to observe the rate of convergence of the penalized KT estimator. To this end, we plot the accuracy of the estimator for different numbers of layers $ T \in \{1,4,9,16\}$ using multi-layer SBM graphs in a community scenario. Each layer's connection probability matrix is defined such that each diagonal element is independently drawn from a uniform distribution $\mathcal{U}(0.7,1)$, and the anti-diagonal element is drawn from a uniform distribution $\mathcal{U}(0,0.1)$. These graphs have a number $ n $ of nodes per layer such that, in Figure \ref{fig:fig_T}, $ n \times T $ always equals the same $ N_{\text{tot}} $ (ensuring a constant total number of nodes), and in Figure \ref{fig:fig_sqrt_T}, $ n^2 \times T $ is always equal to the same $ N_{\text{tot}} $ (ensuring a constant maximum number of interactions). 

\begin{figure}[!t]
    \centering
    \begin{subfigure}[t]{0.45\textwidth}
        \centering
        \includegraphics[width=\textwidth, height=5cm]{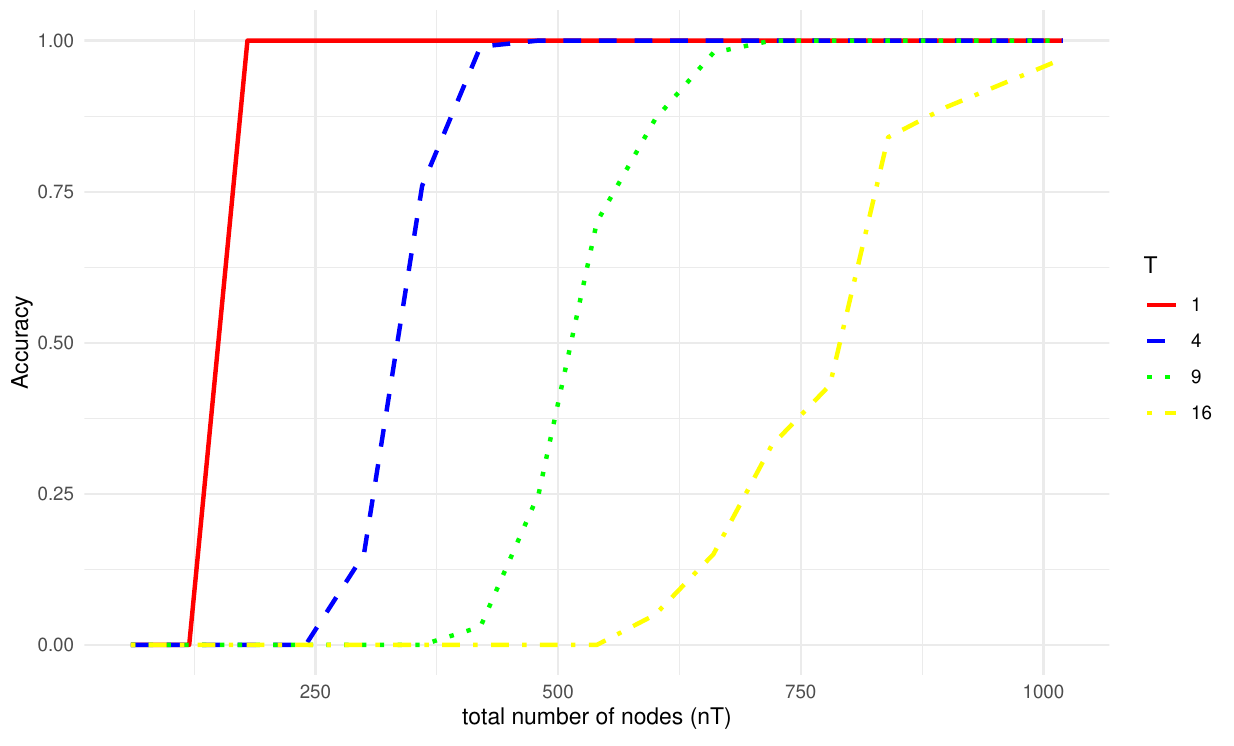}
        \caption{constant total number of nodes}
        \label{fig:fig_T}
    \end{subfigure}
    \hfill
    \begin{subfigure}[t]{0.45\textwidth}
        \centering
        \includegraphics[width=\textwidth, height=5cm]{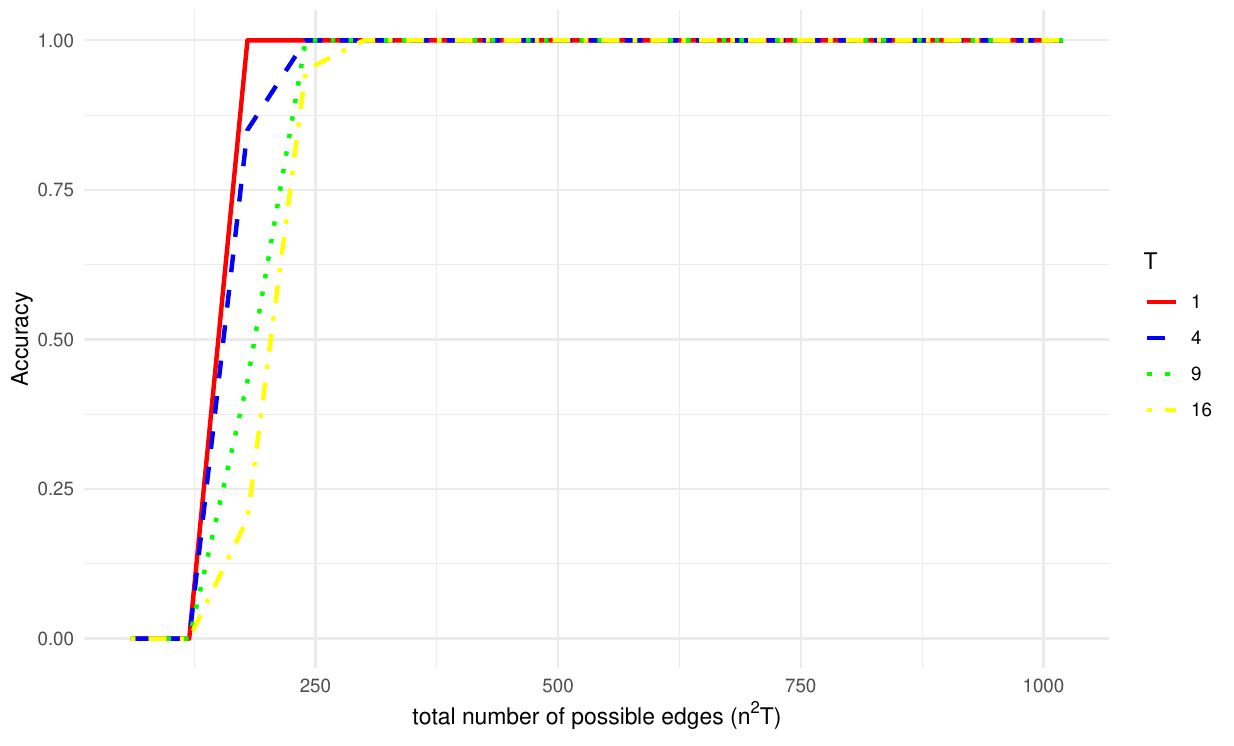} 
        \caption{constant maximum number of interactions}
        \label{fig:fig_sqrt_T}
    \end{subfigure}
    \caption{Illustration of convergence rates of the penalized KT-estimator for different number of layers.}
    \label{fig:main}
\end{figure}

In the first case (constant total number of nodes), since the curves are not superimposed, this suggests that the convergence speed of our estimator is not in $ 1 / \sqrt{nT} $ (i.e., $ 1 / \sqrt{\text{total number of nodes}} $). In contrast, in the second case (constant maximum number of interactions), the curves are almost superimposed, which suggests a convergence rate of $ 1 / (n \sqrt{T}) $ (or $ 1 / \sqrt{\text{maximum number of interactions}} $).

\begin{figure}[!t]
        \centering
        \includegraphics[height=5cm]{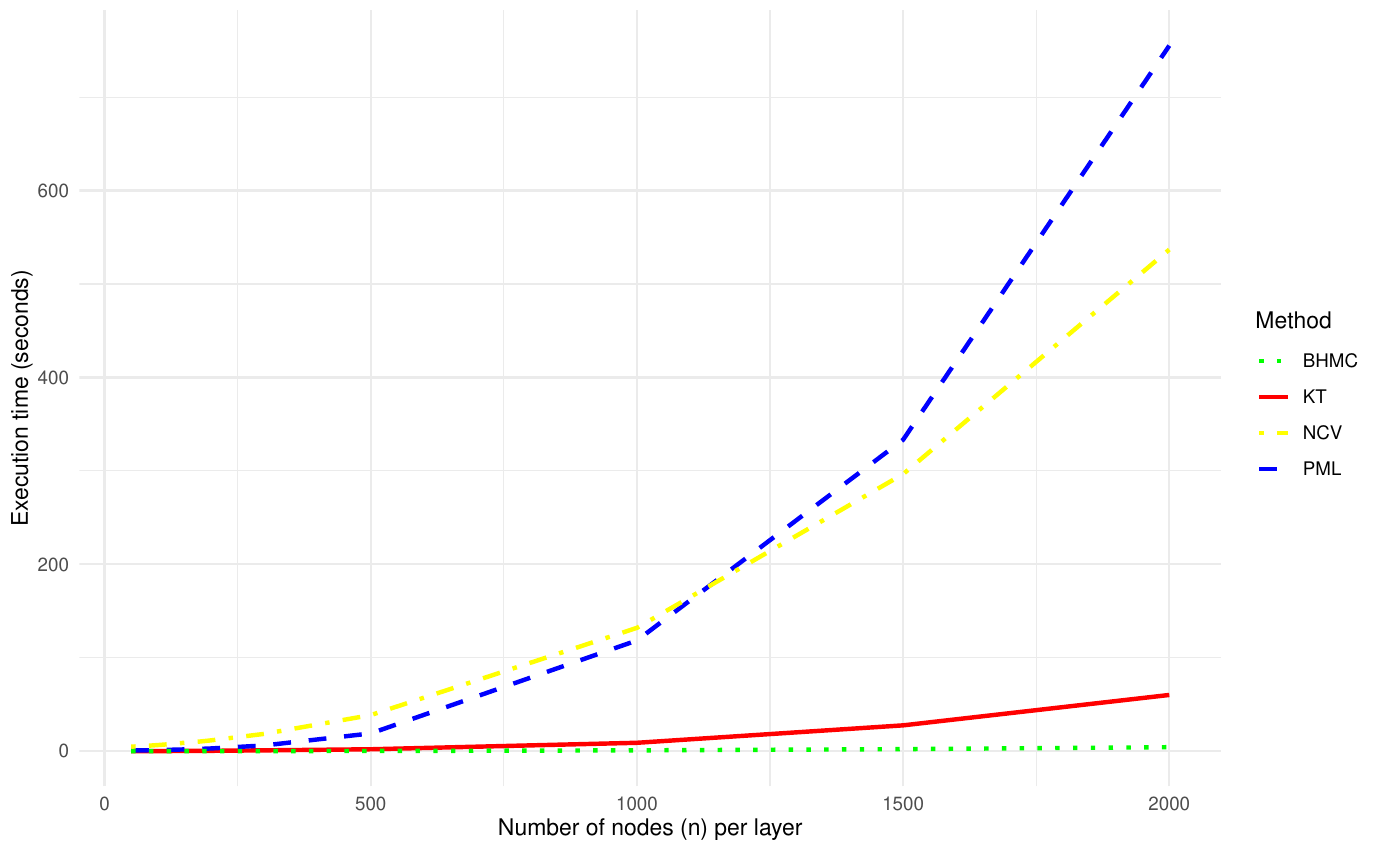} 
        \caption{computation time of the different methods}
        \label{fig:figure_comparaison_temps}
 \end{figure}  
    
As a final experiment, we aim to observe the computation time. It is presented in Figure \ref{fig:figure_comparaison_temps} as a function of the number of nodes $n \in \{50,75,100,125,150,175,200,300,500,1000,1500,2000    \} $ per layer, averaged over 10 simulated graphs which were generated as in the first scenario using the $P^{0,t}$ from Figure \ref{fig:matrice}. This experiment was performed in R (version 4.4.1) on a MacBook Air (2022) with an Apple M2 chip running macOS Sequoia 15.2. We observe that the penalized KT estimator runs very quickly, much faster than the PML and NCV estimators, and is surpassed only by the BHMC. For example, evaluating a $T=5$ layers multi-layer graphs with $n=2000$ nodes per layer takes one minute with the penalized KT estimator, while the BHMC, NCV, and PML estimators take 4 seconds, 9 minutes, and 12 minutes and 30 seconds respectively over the same collection. Thus, the penalized KT estimator is of course slower than a spectral method but it is very competitive with the other two. By interpreting these results in parallel with those of the first experiment shown in Figure \ref{fig:figure_comparaison_results}, the penalized KT estimator emerges as the fastest among the estimators that successfully estimate the correct number of clusters in this scenario.

\appendices  
\renewcommand{\thesubsection}{\thesection.\arabic{subsection}}

\section{Proofs of technical results for the consistency theorem}

\subsection{Proof of the uniform bound proposition (Proposition \ref{propfond})} \label{preuve_propfond}
The proof is a direct generalization of the proof of Proposition 1 in \cite{CL20} for the case with multiple layers or time points. 

The first inequality is a direct consequence of the definition of the $\mathbf{KT}$ distribution. We start by proving the second inequality of the proposition for a multi-layer SBM.  We first note that we have 
\begin{equation*} 
\mathbb{P}_{\pi, \Pvec}(\mathbf{a}_{n \times n} ^{1:T} \mid \mathbf{z}_{n})=\prod_{t=1}^{T} \prod_{1 \leq a \leq b \leq k} \left(P_{a b}^{t}\right) ^ {o_{a b}^{t}}\left(1-{P_{a b}^{t}}\right)^{n_{a b}-o_{a b}^{t}},
\end{equation*}
and that the maximum likelihood estimator, in the complete model, for $P_{a b}^{t}$ is given by $o_{a b}^{t}/n_{a b}$. 
Then with the same calculations as in \cite{CL20}, we have that 
\begin{equation*}
\frac{\mathbb{P}_{\pi, \Pvec}\left(\mathbf{z}_{n}, \anT\right)}{\mathbf{KT}_{k}^{T}\left(\mathbf{z}_{n}, \anT\right)} \leq e^{C\left(\mathbf{z}_{n}, \anT\right)}, 
\end{equation*}
where
\begin{align*}
C\left(\mathbf{z}_{n}, \anT\right)= & \log \left(\frac{\Gamma\left(\frac{1}{2}\right) \Gamma\left(n+\frac{k}{2}\right)}{\Gamma\left(\frac{k}{2}\right) \Gamma\left(n+\frac{1}{2}\right)}\right) \\ &+\sum_{t=1}^{T}\sum_{1 \leq a \leq b \leq k} \log \left(\frac{\Gamma\left(\frac{1}{2}\right) \Gamma\left(n_{a b}+1\right)}{\Gamma\left(n_{a b}+\frac{1}{2}\right)}\right).
\end{align*}
Then, by using Stirling's formula for the $\Gamma$ function, we write
\begin{equation*}
C\left(\mathbf{z}_{n}, \anT\right) \leq \frac{Tk(k+1)+k-1}{2} \log n+c_{k,T}, 
\end{equation*}
where
$
c_{k,T}=Tk(k+1)+1
$.
This concludes the proof of the proposition for a multi-layer SBM. 

We now prove it for a dynamic SBM. For the second inequality in \eqref{eqprop1_dyn} we have that, given $\mathbf{z}_n^1$, for $(\Pi, \Pvec) \in \Theta^{k,T}_\dyn$
\begin{equation}
\mathbb{P}_{\Pi}(\znT|\mathbf{z}_n^1)= \prod_{1 \leq a, b \leq k} \pi_{ab}^{c_{ab}},
\label{Pz_dyn}
\end{equation}
and
\begin{equation}
\mathbb{P}_{\Pi, \Pvec}(\anT \mid \znT)=\prod_{t=1}^{T} \prod_{1 \leq a \leq b \leq k} \left(P_{a b}^{t}\right) ^ {o_{a b}^{t}}\left(1-{P_{a b}^{t}}\right)^{n_{a b}^t-o_{a b}^{t}}.
\label{Paz_dyn}
\end{equation}
Using that the maximum likelihood estimator, in the complete model, for the transition probability $\pi_{ab}$, is given by $c_{ab}/n_a$, with $n_a=\sum_{t=1}^{T}n_a^t$, and using that for the matrices $P_{a b}^{t}$ for $a\neq b$, and $P_{aa}$, the maximum likelihood estimators are given by $o_{a b}^{t}/n_{a b}^t$ and $o_{aa}/n_{aa}$ respectively, where $o_{aa}= \sum_{t=1}^T o_{aa}^t$ and $n_{aa}= \sum_{t=1}^T n_{aa}^t$, we can bound from above \eqref{Pz_dyn} and \eqref{Paz_dyn} by
\begin{equation*}
\mathbb{P}_{\Pi}(\mathbf{z}_{n}^{2:T} |\mathbf{z}_{n}^1) \leq \prod_{1 \leq a, b \leq k} \left(\frac{ c_{ab}}{ n_a} \right)^{c_{ab}}, 
\end{equation*}
and
\begin{align*}
 &\mathbb{P}_{\Pi, \Pvec}(\anT \mid\znT) \\ 
 &\leq \prod_{t=1}^{T} \prod_{1 \leq a < b \leq k} \left({\frac{o_{a b}^{t}}{n_{a b}^t}} \right)^ {o_{a b}^{t}}\left(1-{\frac{o_{a b}^{t}}{n_{a b}^t}}\right)^{n_{a b}^t-o_{a b}^{t}}  \\
 & \hspace{2cm}\times  \prod_{1 \leq a \leq k} \left({\frac{o_{a a}}{n_{a a}}} \right)^ {o_{a a}}\left(1-{\frac{o_{a a}}{n_{a a}}}\right)^{n_{a a}-o_{a a}}.
\end{align*}
Then by some calculations we obtain 
\begin{equation*}
\frac{\mathbb{P}_{\Pi,\Pvec }\left(\anT\mid \znT\right)}{\mathbf{KT}_{k}^{T}\left(\anT \mid \znT \right)} \leq e^{C\left(\znT, \anT\right)}, 
\end{equation*}
where
\begin{align*}
C\left(\znT, \anT\right)=& \sum_{a=1}^k \log \left(\frac{\Gamma\left(\frac{1}{2}\right) \Gamma\left(n_a+\frac{k}{2}\right)}{\Gamma\left(\frac{k}{2}\right) \Gamma\left(n_a+\frac{1}{2}\right)}\right) \\
& + \sum_{1 \leq a \leq k} \log \left(\frac{\Gamma\left(\frac{1}{2}\right) \Gamma\left(n_{a a}+1\right)}{\Gamma\left(n_{a a}+\frac{1}{2}\right)}\right) \nonumber \\
& + \sum_{t=1}^{T}\sum_{1 \leq a < b \leq k} \log \left(\frac{\Gamma\left(\frac{1}{2}\right) \Gamma\left(n_{a b}^t+1\right)}{\Gamma\left(n_{a b}^t+\frac{1}{2}\right)}\right).
\end{align*}
Then we bound from above $C$ by
\begin{align*}
&C\left(\znT, \anT\right) \\
&\leq  \frac{k}{2} \log(n^2T)+\frac{k(k-1)}{2}  \log (nT) +\frac{Tk(k-1)}{2} \log n+c_{k,T}, 
\end{align*}
where
$
c_{k,T}=\frac{k}{3T}[k(k-1)+2]+Tk(k-1)+2k
$, and this concludes the proof.

\subsection{Proof of a technical lemma used in the proof of the non-underestimation}

\begin{lemme} \label{lemme_technique}
    Let $f$ be a continuous real-valued function over the set of $k \times k$ matrices with non-negative entries, and let $n(\mathbf Z)/n \rightarrow \pi$ almost surely as $n \rightarrow \infty$ at a rate $a_n \downarrow 0$, and where $\pi$ is a vector with positive entries. We have that almost surely \begin{equation*}
    \underset{n \rightarrow \infty}{\limsup} \sup_{\substack {Q_n : \|Q_n\|_1=1\\Q_n^{\intercal}{\bf{1}}_{k} = n(\mathbf Z)/n}} f(Q_n) \leq \sup_{\substack {R : \|R\|_1=1\\R^{\intercal}{\bf{1}}_{k} = \pi}} f(R). \end{equation*}
\end{lemme}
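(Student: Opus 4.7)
The plan is a standard upper-semicontinuity argument for a supremum of a continuous function taken over a constraint set that varies with $n$ and converges to a limit set. I would attain the sup for each $n$ at some maximizer $Q_n^\star$, extract a compactness limit of $(Q_n^\star)$, and verify that this limit is feasible in the limiting problem.

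Concretely, let $\mathcal{C}_n := \{Q \in \mathbb{R}_{\geq 0}^{k\times k} : \|Q\|_1 = 1,\; Q^{\intercal}\mathbf{1}_k = n(\mathbf{Z})/n\}$ and define $\mathcal{C}_\infty$ analogously with $\pi$ in place of $n(\mathbf{Z})/n$. A non-negative matrix whose entries sum to $1$ has each entry in $[0,1]$, so both sets are contained in the compact cube $[0,1]^{k\times k}$; being intersections of finitely many closed sets, both are compact. Moreover $\mathcal{C}_n$ is non-empty since $\mathrm{diag}(n(\mathbf{Z})/n) \in \mathcal{C}_n$. Continuity of $f$ on the compact set $\mathcal{C}_n$ then yields a maximizer $Q_n^{\star} \in \mathcal{C}_n$, and I set $s_n := f(Q_n^{\star})$.

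I would then work on the almost-sure event $\{n(\mathbf{Z})/n \to \pi\}$, extract a subsequence $(n_j)$ along which $s_{n_j} \to \limsup_n s_n$, and then a further subsequence (not renamed) along which $Q_{n_j}^{\star} \to R^{\star}$, which exists by Bolzano–Weierstrass in the compact cube $[0,1]^{k\times k}$. Passing to the limit in the linear constraints $\|Q_{n_j}^{\star}\|_1 = 1$ and $(Q_{n_j}^{\star})^{\intercal} \mathbf{1}_k = n(\mathbf{Z})/n_j$ gives $R^{\star} \in \mathcal{C}_\infty$, and continuity of $f$ yields
\begin{equation*}
\limsup_{n \to \infty} s_n \;=\; \lim_{j \to \infty} f(Q_{n_j}^{\star}) \;=\; f(R^{\star}) \;\leq\; \sup_{R \in \mathcal{C}_\infty} f(R),
\end{equation*}
which is the desired inequality.

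I do not expect any genuine obstacle: the argument is purely topological, a textbook Berge-type upper-semicontinuity result that rests only on the compactness of $[0,1]^{k\times k}$ and the continuity of $f$. The only points worth recording carefully are the attainment of the sup and the feasibility of the limit matrix $R^{\star}$ (both linear constraints pass to the limit since the map $Q \mapsto (\|Q\|_1, Q^{\intercal}\mathbf{1}_k)$ is continuous). It is worth noting that the rate $a_n$ of the convergence $n(\mathbf{Z})/n \to \pi$ plays no role here: only the qualitative almost-sure convergence is used, and the positivity of the entries of $\pi$ is not needed for this argument.
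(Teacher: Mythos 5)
Your proof is correct, and it is genuinely more direct than the paper's. The paper does not work with the exact constraint sets $A_n=\{Q:\|Q\|_1=1,\ Q^{\intercal}\mathbf{1}_k=n(\mathbf Z)/n\}$: it embeds them into relaxed sets $B_n(c)=\{Q:\|Q\|_1=1,\ \|Q^{\intercal}\mathbf{1}_k-\pi\|_\infty\le c\,a_n\}$, which are nested ($B_{n+1}(c)\subseteq B_n(c)$) and converge to the limit set $B$ in Hausdorff distance; it then takes maximizers over $B_n(c)$, extracts a convergent subsequence, and shows by contradiction (using the Hausdorff convergence) that the subsequential limit lies in $B$, finally invoking the monotonicity of $\sup_{B_n(c)}f$ to identify the $\limsup$. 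This is exactly why the hypothesis of a rate $a_n\downarrow 0$ appears in the statement: it is needed to get $A_n\subseteq B_n(c)$ for a single constant $c$ and to make the relaxed sets nested. You instead take maximizers over $A_n$ itself (non-empty, compact, so attained), pick a subsequence realizing the $\limsup$ of the values, extract a convergent subsequence of maximizers within $[0,1]^{k\times k}$, and pass the two linear constraints to the limit on the almost-sure event $n(\mathbf Z)/n\to\pi$, so the limit matrix is feasible for the limiting problem. This avoids the Hausdorff machinery and the nestedness argument altogether, and your observations that the rate $a_n$ and the positivity of $\pi$ play no role in this particular inequality are both accurate (positivity is only used later, in the vertex argument of the subsequent lemmas). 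The only price of your route is nothing at all for the statement as given; the paper's relaxation would only matter if one wanted quantitative control on how fast the supremum over $A_n$ approaches the limiting supremum, which the lemma does not claim.
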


\begin{proof}
    For any constant $c>0$ and any integer $n$, consider the following sets of $k\times k$ matrices
    \begin{equation*}A_n=\{Q_n : \|Q_n\|_1=1 \text{ and } Q_n^{\intercal}{\bf{1}}_{k} = n(\mathbf Z)/n\},
    \end{equation*} 
    \begin{equation*}B_n(c)=\{Q_n : \|Q_n\|_1=1 \text{ and } \| Q_n^{\intercal}{\bf{1}}_{k} - \pi \|_{\infty} \leq c \times a_n \},\end{equation*}
    and \begin{equation*}B=\{R : \|R\|_1=1 \text{ and } R^{\intercal}{\bf{1}}_{k} =\pi\}.
    \end{equation*} 
    Note that there exists $c>0$ such that, for all $n$, we have $A_n \subseteq B_n(c)$ and $B_n(c) \underset{n \rightarrow \infty}{ \rightarrow} B$ for the Hausdorff distance. So we have that 
    \begin{equation*}\underset{Q_n\in A_n}{\sup}f(Q_n) \leq \underset{Q_n \in B_n(c)}{\sup}f(Q_n).\end{equation*}
     Since $f$ is continuous and $B_n(c)$ is a compact set for all $n$, the supremum is attained. Let \begin{equation*}Q_n^{\max}= \underset{Q_n \in B_n(c)}{\arg \max}f(Q_n).\end{equation*} Since we have that $B_n(c)$ is a compact for all $n$, and for all $n$ we have that $B_{n+1}(c) \subseteq B_n(c)$, we can extract a sub-sequence $( Q_{n_j}^{\max} )_j$ of $\left( Q_{n}^{\max} \right)_n$ which, by compactness of $B_1(c)$ converges to a matrix $R^\star$ in $B_1(c)$. Then we want to prove that $R^\star$ is in $B$. Let us proceed with a proof by contradiction. If $R^\star$ is not in $B$, we can find $\epsilon >0$ and $\eta >0$ such that $d_{\mathcal{H}}(\mathcal{B}(R^\star,\epsilon),B) > \eta$, where $d_{\mathcal{H}}$ is the Hausdorff distance and $\mathcal{B}(R^\star,\epsilon)$ is the ball for the $\| \cdot \|_1$ norm over matrices centered at $R^\star$ with radius $\epsilon$. Since $( Q_{n_j}^{\max} )_j \rightarrow R^\star$ there exists $n_0$ such that for all $j>n_0$ we have $( Q_{n_j}^{\max} )_j \in \mathcal{B}(R^\star,\epsilon)$. So, for all $j>n_0$
    \begin{align*}
        d_{\mathcal{H}}(B_{n_j}(c),B)& = \max \{\underset{M \in B_{n_j}(c)}{\sup}d(M,B),\underset{N \in B}{\sup}d(B_{n_j},N)\} \\
        & \geq \underset{M \in B_{n_j}(c)}{\sup}d(M,B) \\
        & \geq d(Q_{n_j}^{\max},B) \\
        & \geq \eta >0.
    \end{align*}
    However $d_{\mathcal{H}}(B_{n_j},B) \rightarrow 0$ which gives a contradiction. So we must have that $R^\star$ is in $B$. Then by continuity of $f$ we have that $f( Q_{n_j}^{\max})$ converges towards $f(R^\star)$. Moreover we have that
    \begin{align*}
    \underset{n \rightarrow \infty}{\limsup} \sup_{Q_n \in B_n(c)} f(Q_n) &= \underset{k \rightarrow \infty}{\lim} \sup_{n\geq k}\sup_{Q_n \in B_n(c)} f(Q_n) \nonumber \\
    & = \underset{k \rightarrow \infty}{\lim} \sup_{Q_k \in B_k(c)} f(Q_k) \\
    & = \underset{k \rightarrow \infty}{\lim}  f(Q_k^{\max}) \nonumber \\
   &=  \underset{j \rightarrow \infty}{\lim}  f(Q_{n_j}^{\max}) = f(R^\star), \nonumber
    \end{align*}
 where the second equality comes from the fact that $B_{n+1}(c) \subseteq B_n(c)$ for all $n$ and the fourth follows from that the limit exists and thus equals any subsequence limit. So we have that
    \begin{align*}
        \underset{n \rightarrow \infty}{\limsup} \sup_{\substack {Q_n : \|Q_n\|_1=1\\Q_n^{\intercal}{\bf{1}}_{k} = n(\mathbf Z)/n}} f(Q_n) & \leq  \underset{n \rightarrow \infty}{\limsup} \sup_{Q_n \in B_n(c)} f(Q_n) \\
        &= f(R^\star)\\
        & \leq \sup_{\substack {R : \|R\|_1=1\\R^{\intercal}{\bf{1}}_{k} = \pi}} f(R), 
    \end{align*}
    and this proves the lemma.
\end{proof}

\section{Proofs of the non-overestimation part of the consistency theorem}

\subsection{Proof of Lemma \ref{lemme_exp}} \label{preuve_lemme_exp}
    We start the proof with the multi-layer SBM.
For $k>k_{0} $ we have
\begin{align}
    &\mathbb{P}_{\pi^{0}, \Pvec^{0}}  (\hat{k}_{K T}  =k) = \mathbb{E}_{\pi^{0}, \Pvec^{0}}\left[\mathds{1}\{ \hat{k}_{K T} =k\} \right] \nonumber \\
    & = \sum_{\anT} \mathbb{P}_{\pi^{0}, \Pvec^{0}}(\anT) \mathds{1}\Biggl\{\underset{1 \leq k^{\prime} \leq n}{\arg \max } \biggl\{ \log \mathbf{KT}_{k{\prime}}^{T}(\anT) \nonumber \\
    &\hspace{5cm} - \pen_\ML(k^{\prime},n,T) \biggr\} =k\Biggr\} \nonumber\\
    & \leq \sum_{\anT} \mathbb{P}_{\pi^{0}, \Pvec^{0}}(\anT) \mathds{1} \biggl\{ \log \mathbf{KT}_{k}^{T}(\anT) - \pen_\ML(k,n,T) \nonumber \\
    & \hspace{3cm}\geq \log \mathbf{KT}_{k_{0}}^{T}(\anT) - \pen_\ML(k_{0},n,T)  \biggr\} \nonumber \\
    & = \sum_{\anT} \mathbb{P}_{\pi^{0}, \Pvec^{0}}(\anT) \mathds{1} \biggl\{ \mathbf{KT}_{k_{0}}^{T}(\anT) \leq \mathbf{KT}_{k}^{T}(\anT) \times \nonumber \\
    & \hspace{3cm} \exp[\pen_\ML(k_{0},n,T) - \pen_\ML(k,n,T)]\biggr\}.
    \label{ineq_somme_P}
\end{align}
By using the inequality \eqref{eqprop1} of Proposition \ref{propfond},
\begin{align*}
&\log \mathbb{P}_{\pi^{0}, \Pvec^{0}}(\anT)  \leq \log \sup_{(\pi,\Pvec) \in \Theta^{k_{0},T}}\mathbb{P}_{\pi, \Pvec}(\anT) \\
& \leq \log \mathbf{KT}_{k_{0}}^{T}(\anT) + \frac{Tk_{0}(k_{0}+1)+k_0-1}{2} \log n +c_{k_{0},T},
\end{align*}
and therefore, 
\begin{equation}
    \mathbb{P}_{\pi^{0}, \Pvec^{0}}(\anT) \leq \mathbf{KT}_{k_{0}}^{T}(\anT) n^{\frac{Tk_{0}(k_{0}+1)+k_0-1}{2} } e^{c_{k_{0},T}}.
    \label{ineq_P}
\end{equation}
Applying \eqref{ineq_P} in \eqref{ineq_somme_P}, and denoting $d_{k_{0}, k, n,T} = \pen_\ML(k_{0},n,T) - \pen_\ML(k,n,T)$, we have that 
\begin{align*}
&\mathbb{P}_{\pi^{0}, \Pvec^{0}}(\hat{k}_{K T}  =k) \\ 
& \leq \sum_{\anT} \mathbf{KT}_{k_{0}}^{T}(\anT) n^{\frac{Tk_{0}(k_{0}+1)+k_0-1}{2} } e^{c_{k_{0},T}} \times \\
& \hspace{2cm} \mathds{1} \left\{ \mathbf{KT}_{k_{0}}^{T}(\anT) \leq \mathbf{KT}_{k}^{T}(\anT) \exp[d_{k_{0}, k, n,T}]\right\}\\
& \leq \sum_{\anT} \mathbf{KT}_{k}^{T}(\anT) e^{d_{k_{0}, k, n,T}} n^{\frac{Tk_{0}(k_{0}+1)+k_0-1}{2} } e^{c_{k_{0},T}}\\
&= \exp \left\{ \frac{Tk_{0}(k_{0}+1)+k_0-1}{2} \log n + c_{k_{0},T} +d_{k_{0}, k, n,T}\right\},
\end{align*}
where the last equality follows from the fact that $\mathbf{KT}_{k}(\cdot)$ is a probability distribution. This concludes the proof of Lemma \ref{lemme_exp} for a multi-layer SBM. The proof is the same for a dynamic SBM, using inequality \eqref{eqprop1_dyn} in Proposition \ref{propfond}.

\subsection{Proof of Lemma \ref{lemme_pas_k,log}} \label{preuve_lemme_pas_k,log}

We prove the result in the case of a multi-layer SBM. First observe that
\begin{align}\label{premier_truc}
    &\mathbb{P}_{\pi^{0}, \Pvec^{0}}\left(\hat{k}_{\mathbf{KT}}\left(\anT\right)  \in\left(k_{0}, \log n\right]\right) \nonumber\\
    & \hspace{3cm}=  \sum_{k=k_{0}+1}^{\log n} \mathbb{P}_{\pi^{0}, \Pvec^{0}}\left(\hat{k}_{\mathbf{KT}}\left(\anT\right)=k\right). 
\end{align}
Using Lemma \ref{lemme_exp} and the fact that $k \mapsto \pen_\ML(k,n,T)$ is an increasing function for all $n$ and $T$, we can bound the sum in the right-hand side by
\begin{align} \label{deuxieme_truc}
&\sum_{k=k_{0}+1}^{\log n}  \exp \left\{\frac{Tk_{0}(k_{0}+1)+k_0-1}{2} \log n  +c_{k_{0},T}+d_{k_{0}, k, n,T}\right\}  \nonumber\\
& \leq e^{c_{k_{0},T}}( \log n )\exp \left\{\frac{Tk_{0}(k_{0}+1)+k_0-1}{2} \log n+d_{k_{0}, k_{0}+1, n,T}\right\},
\end{align}
where we recall that $d_{k_{0}, k, n,T}=\pen_\ML\left(k_{0}, n,T\right)-\pen_\ML(k, n,T)$ and $c_{k_{0},T}=Tk_{0}\left(k_{0}+1\right)+1$.  By definition
\begin{equation*}
\pen_\ML(k, n,T)=\sum_{i=1}^{k-1}\left[\frac{Ti(i+1)+i-1}{2} +1+\epsilon\right] \log n,
\end{equation*}
and therefore
\begin{align*}
&\frac{Tk_{0}(k_{0}+1)+k_0-1}{2} \log n+  d_{k_{0}, k_{0}+1, n,T} \nonumber \\
 & =\left(\frac{Tk_{0}(k_{0}+1)+k_0-1}{2}-\frac{Tk_{0}(k_{0}+1)+k_0-1}{2}-1-\epsilon\right) \log n \nonumber \\
& =-(1+\epsilon ) \log n.
\end{align*}
We obtain by using this in \eqref{deuxieme_truc} to bound \eqref{premier_truc} that
\begin{equation*}
\sum_{n=1}^{\infty} \mathbb{P}_{\pi^{0}, \Pvec^{0}}\left(\hat{k}_{\mathbf{KT}}\left(\anT\right) \in\left(k_{0}, \log n\right]\right) \leq e^{c_{k_{0},T}} \sum_{n=1}^{\infty} \frac{\log n}{n^{1+\epsilon }}
<\infty .
\end{equation*}Now the result follows by the first Borel Cantelli lemma.
The proof for a dynamic SBM is the same using the part of the dynamic SBM in Lemma \ref{lemme_exp}. 

\subsection{Proof of Lemma \ref{lemma_pas_log,n}} \label{preuve_lemma_pas_log,n}
We write the proof in the case of a multi-layer SBM. As in the proof of Lemma \ref{lemme_pas_k,log} we write
\begin{align*}
 &\mathbb{P}_{\pi^{0},\Pvec^{0}}\left(\hat{k}_{\mathbf{KT}}\left(\anT\right) \in(\log n, n]\right) \\
&\hspace{3cm }=\sum_{k=\log n}^{n} \mathbb{P}_{\pi^{0}, \Pvec^{0}}\left(\hat{k}_{\mathbf{KT}}\left(\anT\right)=k\right),
\end{align*}
and we use Lemma \ref{lemme_exp} and the increasing property of $k \mapsto \pen_\ML(k,n,T)$ to bound the sum in the right-hand side by
\begin{align}
 &\sum_{k=\log n}^{n} \exp \left\{\frac{Tk_{0}(k_{0}+1)+k_0-1}{2} \log n+c_{k_{0},T}+d_{k_{0}, k, n,T}\right\} \nonumber \\
& \leq e^{c_{k_{0},T}} n \exp \left\{\log n\left[\frac{Tk_{0}(k_{0}+1)+k_0-1}{2}+\frac{d_{k_{0}, \log n, n,T}}{\log n}\right]\right\}.
\label{deuxi_log,n}
\end{align}
Since $\pen_\ML(k, n,T) / \log (n)$ does not depend on $n$ and increases linearly in $T$ and cubically in $k$ we have that
\begin{align*}
&\frac{Tk_{0}(k_{0}+1)+k_0-1}{2}+\frac{d_{k_{0}, \log n, n,T}}{\log n} \\ 
& =\frac{Tk_{0}(k_{0}+1)+k_0-1}{2}+\frac{\pen_\ML\left(k_{0}, n,T\right)}{\log n} \\
& \hspace{5cm}-\frac{\pen_\ML(\log n, n,T)}{\log n} \\
&<-3,
\end{align*}
for all sufficiently large $n$ and fixed $T,k_0$. Thus summing the right-hand side of \eqref{deuxi_log,n} in $n$ we obtain
\begin{align*}
&\sum_{n=1}^{\infty} e^{c_{k_{0},T}} n e^{\left\{\log n\left[\frac{Tk_{0}(k_{0}+1)+k_0-1}{2}+\frac{d_{k_{0}, \log n, n,T}}{\log n}\right]\right\}}<\infty,
\end{align*}
and the result follows from the first Borel Cantelli lemma.
The proof for a dynamic SBM is the same using the part of the dynamic SBM in Lemma \ref{lemme_exp}.

\section{Proofs of the non-underestimation part of the consistency theorem}

\subsection{Proof of the concentration lemma (Lemma \ref{lemme_prel_1})} \label{preuve_lemme_prel_1}
For any fixed $\mathbf z\in [k_0]^n$ and $\mathbf{\bar z} \in[k]^n$ we have that for all $a,b \in [k]^2$
\begin{align*}
\tilde o_{ab}(\mathbf{\bar z},\mathbf{A}_{n\times n}^{t})  &-  \rho_n^t n^2 [Q_n(\mathbf{\bar z},\mathbf{z})S^{0,t} Q_n(\mathbf{\bar z},\mathbf z)^{\intercal}]_{ab} \\
&= \sum_{1\leq i,j\leq n}(\mathbf{A}_{ij}^{t}-  P^{0,t}_{{z}_{i}{z}_{j}}) \mathds{1}\{{\bar z}_i=a,{\bar z}_j=b\}.
\end{align*}
We first assume that $a\neq b$. Observe that  given $\mathbf{Z}=\mathbf{z}$, the counters $\tilde o_{ab}(\mathbf{\bar z},\mathbf{A}_{n\times n}^{t})$ correspond to the sum of $n_a(\mathbf{\bar z})n_b(\mathbf{\bar z})$ independent Bernoulli random variables  \blue{where $\overline{z}_i=a$ and $\overline{z}_j=b$}, given by $\mathbf{A}^{t}_{ij}$, with variance given by $P^{0,t}_{{z}_i  {z}_j} (1- P^{0,t}_{{z}_i  {z}_j} )$. Using Bernstein's inequality, the fact that $P^{0,t}_{{z}_i  {z}_j} (1- P^{0,t}_{{z}_i  {z}_j} ) \leq \rho_n^t S^{0,t}_{\max} $ with $S^{0,t}_{\max}= \max_{a,b}S^{0,t}_{ab}$ and the fact that $n_a(\mathbf{\bar z})n_b(\mathbf{\bar z}) \leq n^2$
, we have that for any $\delta>0$
\begin{align*}
&\mathbb P_{\pi^0, \Pvec^{0}}\Bigl(\Bigr|\tilde o_{ab}(\mathbf{\bar z},\mathbf{A}_{n\times n}^{t}) \\
&\hspace{2cm}- \rho_n^t n^2[Q_n(\mathbf{\bar z},\mathbf z)S^{0,t} Q_n(\mathbf{\bar z},\mathbf z)^{\intercal}]_{ab}\Bigr|>\delta \mid \mathbf{Z}=\mathbf z\Bigr)   \\
&\leq 2 \exp \left( \frac{- \delta ^2}{2n^2 \rho_n^t S^{0,t}_{\max} + \frac{2}{3}\delta}   \right)\\
&\leq 2 \exp \left( \frac{- \delta ^2}{4n^2 \rho_n^t S^{0,t}_{\max} + \frac{4}{3}\delta}   \right).
\end{align*}
We can rewrite this bound, unsing $\xi = \delta / (\rho_n^t n^{2})$ 
\begin{align*}
&\mathbb P_{\pi^0, \Pvec^{0}}\Bigl(\Bigr|\frac{\tilde o_{ab}(\mathbf{\bar z},\mathbf{A}_{n\times n}^{t})}{\rho_n^t n^2} \\
& \hspace{2cm}- [Q_n(\mathbf{\bar z},\mathbf{z})S^{0,t} Q_n(\mathbf{\bar z},\mathbf z)^{\intercal}]_{ab}\Bigr|>\xi \mid \mathbf{Z}=\mathbf z\Bigr) \\
&\leq 2 \exp \left( \frac{- (\rho_n^t)^2 n^{4} \xi ^2}{4 n^2\rho_n^t S^{0,t}_{\max} + \frac{4}{3} \xi \rho_n^t n^2}  \right) \\ 
&=2 \exp \left(- \frac{\rho_n^t n^{2} \xi ^2}{4 S^{0,t}_{\max}  + \frac{4}{3}\xi} \right)  .
\end{align*}

We first focus on the case of the multi-layer SBM. Using a union bound over all $\mathbf{\bar z}\in [k]^n$ and integrating over $\mathbf{z}$ we obtain that 
\begin{align*}
&\mathbb P_{\pi^0, \Pvec^{0}}\Bigl( \sup_{\mathbf{\bar z} \in [k]^n}\, \Bigr|  \frac{\tilde o_{ab}(\mathbf{\bar z},\mathbf{A}_{n\times n}^{t})}{\rho_n^t n^2} - [Q_n(\mathbf{\bar z},\mathbf z)S^{0,t} Q_n(\mathbf{\bar z},\mathbf z)^{\intercal}]_{ab}\Bigr|>\xi\Bigr) \\
& \leq \sum_{\mathbf{\bar z} \in [k]^n} \mathbb P\Bigl(\Bigr|\frac{\tilde o_{ab}(\mathbf{\bar z},\mathbf{A}_{n\times n}^{t})}{\rho_n^t n^2} - [Q_n(\mathbf{\bar z},\mathbf z)S^{0,t} Q_n(\mathbf{\bar z},\mathbf z)^{\intercal}]_{ab}\Bigr|>\xi \Bigr)
\\
& = \sum_{\mathbf{\bar z} \in [k]^n} \sum_{ \mathbf z \in [k_0]^n} \mathbb P\Bigl(\Bigr|\frac{\tilde o_{ab}(\mathbf{\bar z},\mathbf{A}_{n\times n}^{t})}{\rho_n^t n^2} - \\
& \hspace{2cm}[Q_n(\mathbf{\bar z},\mathbf z)S^{0,t} Q_n(\mathbf{\bar z},\mathbf z)^{\intercal}]_{ab}\Bigr|>\xi| \mathbf{Z}=\mathbf z \Bigr) \mathbb P (\mathbf{Z}=\mathbf z)\\
& \leq \sum_{\mathbf{\bar z} \in [k]^n} \sum_{ \mathbf z \in [k_0]^n} 2 \exp \left(- \frac{\rho_n^t n^{2} \xi ^2}{4 S^{0,t}_{\max}  + \frac{4}{3}\xi} \right)   \mathbb P (\mathbf{Z}=\mathbf z)\\
&= 2 \exp\left(- \frac{\rho_n^t n^{2} \xi ^2}{4 S^{0,t}_{\max}  + \frac{4}{3}\xi} + n \log k \right),
\end{align*}
and this proves the first inequality of the lemma, when $a \neq b$. The first inequality in the dynamic SBM, given $\ZnT=\znT$, is obtained in the same way, using a union bound over all $\mathbf{\bar z}_n^{1:T}\in [k]^{nT}$ and integrating over $\znT$. 
\noindent Besides for $a=b$, we have that
\begin{align*}
    \tilde o_{aa}(\mathbf{\bar z},\mathbf{A}_{n\times n}^{t})&
   - \rho_n^t  n^2[Q_n(\mathbf{\bar z},\mathbf{z})S^{0,t} Q_n(\mathbf{\bar z},\mathbf{z})^{\intercal}]_{aa} \\
    &= \sum_{1\leq i<j\leq n} 2(\mathbf{A}_{ij}^{t}-  P^{0,t}_{{z}_{i}{z}_{j}}) \mathds{1}\{{\bar z}_i={\bar z}_j=a\} \\
    & \hspace{2cm} + \sum_{1 \leq i \leq n} (\mathbf{A}_{ii}^{t}-  P^{0,t}_{{z}_{i}{z}_{i}}) \mathds{1}\{{\bar z}_i=a\}.
\end{align*}
Then, for a multi-layer SBM, given $\mathbf{Z}=\mathbf{z}$, the counters $\tilde o_{aa}(\mathbf{\bar z},\mathbf{A}_{n\times n}^{t})$ correspond to the sum of $n_{aa}(\mathbf{\bar z})$ independent Bernoulli random variables multiplied by 2  \blue{where $\overline{z}_i=\overline{z}_j=a$}, given by $2 \mathbf{A}^{t}_{ij}$, with variance given by $4 P^{0,t}_{{z}_i  {z}_j} (1- P^{0,t}_{{z}_i  {z}_j} )$
, and the sum of $n_a(\mathbf{\bar z})$ independent Bernoulli random variables \blue{where $\overline{z}_i=a$}, given by $\mathbf{A}_{ii}^t$, with variance given by $ P^{0,t}_{{z}_i  {z}_i} (1- P^{0,t}_{{z}_i  {z}_i} )$.
Using Bernstein's inequality, the fact that $P^{0,t}_{{z}_i  {z}_j} (1- P^{0,t}_{{z}_i  {z}_j} ) \leq \rho_n^t S^{0,t}_{\max} $ with $S^{0,t}_{\max}= \max_{a,b}S^{0,t}_{ab}$ and the fact that $4n_{aa}(\mathbf{\bar z})+n_a(\mathbf{\bar z}) \leq 2n^2,$
 we have that for any $\delta>0$
\begin{align*}
&\mathbb P_{\pi^0, \Pvec^{0}}\Bigl(\Bigr|\tilde o_{ab}(\mathbf{\bar z},\mathbf{A}_{n\times n}^{t}) \\
&\hspace{2cm}- \rho_n^t n^2[Q_n(\mathbf{\bar z},\mathbf{z})S^{0,t} Q_n(\mathbf{\bar z},\mathbf{z})^{\intercal}]_{ab}\Bigr|>\delta \mid \mathbf{Z}=\mathbf{z}\Bigr) \\
& \leq 2 \exp \left( -\frac{ \delta ^2}{2(4n_{aa}(\mathbf{\bar z})+n_a(\mathbf{\bar z}))\rho_n^tS^{0,t}_{\max} +\frac{4}{3}\delta}  \right) \nonumber \\
& \leq 2 \exp \left( -\frac{ \delta ^2}{4n^2 \rho_n^tS^{0,t}_{\max} +\frac{4}{3}\delta}  \right).
\end{align*}
So we have the same expression as in the case where $a\neq b$. Moreover, for the dynamic SBM, using the same ideas, \blue{and recalling that, conditionally on the latent variables, the different layers $t$ are independent,} we have that for any $\delta > 0$
\begin{align*}
&\mathbb P_{\Pi^0, \Pvec^{0}}\Bigl(\blue{\Bigl|\frac{ \sum_{t=1}^T o_{aa}(\mathbf{\bar z}_n^{t},A_{n\times n}^t)}{ \rho_n n^2} }\\
 & \hspace{1.2cm}\blue{- \frac{1}{2}\sum_{t=1}^T[Q_n(\mathbf{\bar z}_n^{t},\mathbf{z}_n^t)  S^{0,t} Q_n(\mathbf{\bar z}_n^{t},\mathbf{z}_n^t)^{\intercal}]_{aa}\Bigr| }>\delta \mid  \ZnT=\znT\Bigr)\nonumber \\  
& \leq 2 \exp \left( -\frac{ \delta ^2}{2\sum_{t=1}^T(4n_{aa}(\mathbf{\bar z}^t_n)+n_a(\mathbf{\bar z}^t_n))\rho_n S^{0,t}_{\max} +\frac{4}{3}\delta}  \right) \\
&  \leq 2 \exp \left( -\frac{ \delta ^2}{4Tn^2\rho_n S^{0}_{\max} +\frac{4}{3}\delta}  \right) 
\end{align*}
with $S^{0}_{\max} =\max_t S^{0,t}_{\max} $.We can rewrite this bound with $\xi = \delta / (\rho_n n^{2})$, and using a union bound over all $\mathbf{\bar z}_n^{1:T}\in [k]^{nT}$ and integrating over $\znT \in [k_0]^{nT}$ we obtain that 
\begin{align*}
 &\mathbb P_{\Pi^0, \Pvec^{0}}\Bigl(\sup_{\mathbf{\bar z}_n^{1:T}\in [k]^{nT}} \Bigl|\frac{ \sum_{t=1}^T o_{aa}(\mathbf{\bar z}_n^{t},A_{n\times n}^t)}{ \rho_n n^2} \\
 & \hspace{2cm}- \frac{1}{2}\sum_{t=1}^T[Q_n(\mathbf{\bar z}_n^{t},\mathbf{z}_n^t)  S^{0,t} Q_n(\mathbf{\bar z}_n^{t},\mathbf{z}_n^t)^{\intercal}]_{aa}\Bigr|>\xi \Bigr) \\
 &\leq  2 \exp\left(\frac{- \rho_n n^2 \xi ^2 }{4TS^{0}_{\max}  + \frac{4}{3}\xi}+ Tn \log k\right),
\end{align*}
which proves the lemma.

\subsection{Proof of Lemma \ref{lemme_prel_2}} \label{preuve_lemme_prel_2}

Observe that 
\begin{align}\label{eqlemme_2_sparse}
 &\sum_{1 \leq a,b \leq k} \frac{n_{a}(\mathbf{z}_{n,k}^{\star}) n_{b}(\mathbf{z}_{n,k}^{\star})}{ n^{2}}  \tau \left(\frac{\tilde{o}_{a b}(\mathbf{z}_{n,k}^{\star}, A_{n \times n}^{t})}{\rho_n^t n_{a}(\mathbf{z}_{n,k}^{\star}) n_{b}(\mathbf{z}_{n,k}^{\star})}\right) \nonumber \\
 & = \sum_{1 \leq a,b \leq k} [Q_{n}(\mathbf{z}_{n,k}^{\star},\mathbf Z){\bf{1}}_{k_0}]_{a}[Q_{n}(\mathbf{z}_{n,k}^{\star},\mathbf Z){\bf{1}}_{k_0}]_{b} \times \nonumber \\
 & \hspace{2cm} \tau \left(\frac{\tilde{o}_{a b}(\mathbf{z}_{n,k}^{\star},A_{n \times n}^{t})/(\rho_n^t n^{2})}{[Q_{n}(\mathbf{z}_{n,k}^{\star},\mathbf Z){\bf{1}}_{k_0}]_{a}[Q_{n}(\mathbf{z}_{n,k}^{\star},\mathbf Z){\bf{1}}_{k_0}]_{b}}\right).
\end{align}
Then by Lemma \ref{lemme_prel_1}, for some $\epsilon_n$ such that $\epsilon_n \underset{n\rightarrow \infty}{\rightarrow} 0$ we have that
\begin{equation*}
\Bigl| \frac{\tilde{o}_{a b}(\mathbf{z}_{n,k}^{\star}, A_{n \times n}^{t})}{\rho_n^t n^{2}}  - [Q_n(\mathbf{z}_{n,k}^{\star},\mathbf Z) S^{0,t} Q_n(\mathbf{z}_{n,k}^{\star},\mathbf Z)^{\intercal}]_{ab} \Bigr|  \leq \epsilon_n,
\end{equation*}
eventually almost surely as $n\to\infty$. As $\tau$ is absolutely continuous on the compact interval $[p_{min},p_{max}]$, where $p_{min}>0$, substituting in the right-hand side of \eqref{eqlemme_2_sparse}  $\tilde{o}_{a b}(\mathbf{z}_{n,k}^{\star}, A_{n \times n}^{t})/(\rho_n^t n^{2})$ by $[Q_n(\mathbf{z}_{n,k}^{\star},\mathbf Z) S^{0,t} Q_n(\mathbf{z}_{n,k}^{\star},\mathbf Z)^{\intercal}]_{ab}$ we obtain that 
 \begin{align*}
&\sum_{1 \leq a,b \leq k} \frac{n_{a}(\mathbf{z}_{n,k}^{\star}) n_{b}(\mathbf{z}_{n,k}^{\star})}{ n^{2}}  \tau \left(\frac{\tilde{o}_{a b}(\mathbf{z}_{n,k}^{\star}, A_{n \times n}^{t})}{\rho_n^t n_{a}(\mathbf{z}_{n,k}^{\star}) n_{b}(\mathbf{z}_{n,k}^{\star})}\right) \\ &\leq \sup_{\substack {Q_n : \|Q_n\|_1=1\\Q_n^{\intercal}{\bf{1}}_{k} = n(\mathbf Z)/n}} \sum_{1\leq a, b\leq k} [Q_{n}{\bf{1}}_{k_0}]_{a}[Q_{n}{\bf{1}}_{k_0}]_{b}\times \\
& \hspace{4cm} \tau \biggl(\frac{ [Q_n S^{0,t} Q_n^{\intercal}]_{ab} }{[Q_{n}{\bf{1}}_{k_0}]_{a}[Q_{n}{\bf{1}}_{k_0}]_{b} }\biggr) + \eta_n,
\end{align*}
for some sequence $\eta_n\to 0$ as $n\to\infty$. Then taking $\lim\sup$ on both sides, and using Lemma \ref{lemme_technique} we must have that 
\begin{align*}
&\limsup_{n\to\infty} \frac{1}{2} \sum_{1 \leq a,b \leq k} \frac{n_{a}(\mathbf{z}_{n,k}^{\star}) n_{b}(\mathbf{z}_{n,k}^{\star})}{ n^{2}}  \tau  \left(\frac{\tilde{o}_{a b}(\mathbf{z}_{n,k}^{\star}, A_{n \times n}^{t})}{\rho_n^t n_{a}(\mathbf{z}_{n,k}^{\star}) n_{b}(\mathbf{z}_{n,k}^{\star})}\right)
\\ & \leq \sup_{\substack {R : \| R\|_1=1\\R^{\intercal}{\bf{1}}_{k} = \pi}} \frac{1}{2} \sum_{1\leq a, b\leq k} [R{\bf{1}}_{k_0}]_{a}[R{\bf{1}}_{k_0}]_{b} \tau \biggl(\frac{ [R S^{0,t} R^{\intercal}]_{ab} }{[R{\bf{1}}_{k_0}]_{a}[R{\bf{1}}_{k_0}]_{b} }\biggr),
\end{align*}
almost surely. 
The supremum  in the right-hand side is a maximum of a convex function over a convex polyhedron defined by $\{R\colon \|R\|_1=1, R^\intercal{\bf {1}}_k = \pi \}$. Then, the maximum must be attained at one of the vertices of the polyhedron;   that is, on those matrices $R$ such that at most one entry per column is greater than zero. Since $\pi_a>0$ for all $a\in \{1,\dots,k_0\}$, it follows that each column must have at least one strictly positive entry. Thus, the maximum is achieved on matrices where one and only one entry by column is greater than zero. We denote by $R^{\star}$ one of these maxima (if there is more than one) and let 
\begin{align*} 
    \pi^\star_a &=[R^{\star}{\bf{1}}_{k_0}]_a, \quad a\in \{1,\dots,k\} \nonumber\\
    S^{\star,t}_{ab} &= \frac{[R^{\star}S^{0,t}(R^{\star})^{\intercal}]_{ab}}{[R^{\star}{\bf{1}}_{k_0}{\bf{1}}_{k_0}^{\intercal} (R^{\star})^{\intercal}]_{ab}},\quad a,b\in \{1,\dots,k\}.
\end{align*}
Then 
\begin{align*}
\sup_{\substack {R\colon \| R\|_1=1\\R^{\intercal}{\bf{1}}_{k} = \pi}} \frac{1}{2} \sum_{1\leq a, b\leq k} [R{\bf{1}}_{k_0}]_{a} & [R{\bf{1}}_{k_0}]_{b} \tau \biggl(\frac{ [R S^{0,t} R^{\intercal}]_{ab} }{[R{\bf{1}}_{k_0}]_{a}[R{\bf{1}}_{k_0}]_{b} }\biggr)\\
&= \frac{1}{2} \sum_{1 \leq a,b \leq k} \pi_{a}^{\star} \pi_{b}^{\star} \tau \left(S^{\star,t}_{ab}\right).
\end{align*}
This concludes the proof of the lemma.

\subsection{Proof of Lemma \ref{lemme_prel_3}} \label{preuve_lemme_prel_3}

As $R^{\star}$ has one and only one non-zero entry in each column, we have that there is a surjective function $h : [k_0]\to[k]$ connecting each community in $[k_0]$ (columns of $R^{\star}$) with its corresponding community in $[k]$ (line with non-zero entry). Then for $k=k_0-1$,  there are $k-1$ communities in 
$\{1,\dots,k_0\}$ that are mapped into $k-1$ communities in  $\{1,\dots,k\}$ and two communities in $\{1,\dots,k_0\}$ that are mapped into  a single community in   $\{1,\dots,k\}$. 
Without loss of generality, assume that the communities $k_0-1$ and $k_0$ satisfy $h(k_0-1)=h(k_0)=k=k_0-1$, and for all $a\leq k_0-1$ we have that $h(a)=a$. Moreover, as $(R^{\star})^{\intercal}{\bf 1}_{k}=\pi$, we must have that  the non-zero entries are given by 
\begin{align*}
R_{aa}^{\star}&=\pi_a, \quad 1\leq a\leq k_0-1\\
R_{k_0-1,k_0}^{\star}&= \pi_{k_0}.
\end{align*}
Then, the parameters $\pi^\star$ and $S^{\star,t}$ defined in \eqref{def_S_star_pi_star} are given by 
\begin{align*}
\pi_{a}^{\star}&=\pi_a, \quad 1\leq a\leq k_0-1 \\
\pi_{k_{0}-1}^{\star}&=\pi_{k_0-1}+\pi_{k_0},
\end{align*}
and 
\begin{align*}
S_{ab}^{\star,t}&= S^{0,t}_{ab}, \quad 1\leq a,b\leq k_0-2\\
S_{a,k_0-1}^{\star,t}&=\frac{\pi_{k_0-1}S^{0,t}_{a,k_0-1}+\pi_{k_0}S^{0,t}_{a,k_0}}{\pi_{k_0-1}+\pi_{k_0}},\quad1\leq l\leq k_0-2,\\
S_{k_0-1,k_0-1}^{\star,t}& \\
&\hspace{-1cm}=\frac{\pi_{k_0-1}^2S^{0,t}_{k_0-1,k_0-1}+2\pi_{k_0-1}\pi_{k_0}S^{0,t}_{k_0-1,k_0}+\pi_{k_0}^2S^{0,t}_{k_0,k_0}}{\pi_{k_0-1}^2+2\pi_{k_0-1}\pi_{k_0}+\pi_{k_0}^2}.
\end{align*}
Observe that for all  $1\leq a\leq k_0-2$ we have that $[S^{\star,t}\pi^\star]_a=[S^{0,t}\pi]_a$ then for all $1\leq a,b\leq k_0-2$
\begin{equation*}
\pi_{a}^{\star} \pi_{b}^{\star} \tau \left(S^{\star,t}_{ab}\right) =  \pi_{a} \pi_{b} \tau \left(S^{0,t}_{ab}\right).
\end{equation*}
On the other hand we have that for $1\leq a\leq k_0-2$ it follows, by using twice the log-sum inequality, that
\begin{align*}
\pi_a^\star\pi_{k_0-1}^\star\tau &\left(S^{\star,t}_{a,k_0-1}\right) \\ 
&\leq  \pi_a \pi_{k_0-1} \tau \left(S^{0,t}_{a,k_0-1}  \right) +  \pi_a \pi_{k_0} \tau \left(S^{0,t}_{a,k_0} \right).
\end{align*}
Moreover, we have that the inequality must be strict unless
\begin{equation} \label{condition_1_s}
S^{0,t}_{a,k_0-1}=S^{0,t}_{a,k_0}, \qquad \text{for all }a\leq k_0-2\,.
\end{equation}
On the other hand, for $a= k_0-1$ and $b=k_0-1$, also by using twice the log-sum inequality  we have that 
\begin{align*}
&\pi_{k_0-1}^\star\pi_{k_0-1}^\star \tau \left(S^{\star,t}_{k_0-1,k_0-1}\right)\\
& \leq  \pi_{k_0-1}^2 \tau \left( S^{0,t}_{k_0-1,k_0-1} \right) +2\pi_{k_0-1}\pi_{k_0} \tau \left(S^{0,t}_{k_0-1,k_0}  \right) \\
& \hspace{3.6cm}+\pi_{k_0}^2 \tau \left(S^{0,t}_{k_0,k_0} \right),
\end{align*}
with equality if and only if
\begin{equation} \label{condition_2_s}
S^{0,t}_{k_0-1,k_0-1}= S^{0,t}_{k_0-1,k_0}=S^{0,t}_{k_0,k_0}.
\end{equation}
From \eqref{condition_1_s} and \eqref{condition_2_s} we obtain that the inequality in Lemma \ref{lemme_prel_3} must be strict unless 
\begin{equation*}
S^{0,t}_{a,k_0-1} = S^{0,t}_{a,k_0} \qquad \text{for all }a\leq k_0\,.
\end{equation*}

\subsection{Proof of Lemma \ref{lemme_prel_2_dyn}} \label{preuve_lemme_prel_2_dyn}

Observe that 
\begin{align}\label{eqlemme_2_sparse_dyn}
&\sum_{1 \leq a \neq b \leq k}  \frac{n_{ab}(\mathbf{z}_{n,k}^{\star,t}) }{ n^{2}} \tau \left( \frac{{o}_{a b}(\mathbf{z}_{n,k}^{\star,t}, A_{n\times n}^{t})}{\rho_n n_{ab}(\mathbf{z}_{n,k}^{\star,t})}\right) \nonumber\\ 
 & =
 \sum_{1 \leq a \neq b \leq k} [Q_{n}(\mathbf{z}_{n,k}^{\star,t},\mathbf{Z}_{n}^{t}){\bf{1}}_{k_0}]_{a}[Q_{n}(\mathbf{z}_{n,k}^{\star,t},\mathbf{Z}_{n}^{t}){\bf{1}}_{k_0}]_{b} \times \nonumber \\
 & \hspace{2cm} \tau \left(\frac{{o}_{a b}(\mathbf{z}_{n,k}^{\star,t}, A_{n\times n}^t)/(\rho_n n^{2})}{[Q_{n}(\mathbf{z}_{n,k}^{\star,t},\mathbf{Z}_{n}^{t}){\bf{1}}_{k_0}]_{a}[Q_{n}(\mathbf{z}_{n,k}^{\star,t},\mathbf{Z}_{n}^{t}){\bf{1}}_{k_0}]_{b} }\right).
\end{align}
Then by Lemma \ref{lemme_prel_1}, for some $\epsilon_n$ such that $\epsilon_n \underset{n\rightarrow \infty}{\rightarrow} 0$ we have that
\begin{equation*}
\Bigl| \frac{{o}_{a b}(\mathbf{z}_{n,k}^{\star,t}, A_{n\times n}^t)}{\rho_n n^2} - [Q_{n}(\mathbf{z}_{n,k}^{\star,t},\mathbf{Z}_{n}^{t})S^{0,t} Q_{n}(\mathbf{z}_{n,k}^{\star,t},\mathbf{Z}_{n}^{t})^{\intercal}]_{ab}\Bigr|  \leq \epsilon_n
\end{equation*}
eventually almost surely as $n\to\infty$. As $\tau$ is absolutely continuous on the compact interval $[p_{min},p_{max}]$, where $p_{min}>0$, substituting in the right-hand side of \eqref{eqlemme_2_sparse_dyn} ${o}_{a b}(\mathbf{z}_{n,k}^{\star,t}, A_{n\times n}^t)/(\rho_n n^{2})$ by $ [Q_{n}(\mathbf{z}_{n,k}^{\star,t},\mathbf{Z}_{n}^{t})S^{0,t} Q_{n}(\mathbf{z}_{n,k}^{\star,t},\mathbf{Z}_{n}^{t})^{\intercal}]_{ab}$  we obtain that 
 \begin{align*}
&\sum_{1 \leq a \neq b \leq k}  \frac{n_{ab}(\mathbf{z}_{n,k}^{\star,t}) }{ n^{2}}  \tau \left( \frac{{o}_{a b}(\mathbf{z}_{n,k}^{\star,t}, A_{n\times n}^{t})}{\rho_n n_{ab}(\mathbf{z}_{n,k}^{\star,t})}\right)\\ 
& \leq \sup_{\substack {Q_n : \|Q_n\|_1=1\\Q_n^{\intercal}{\bf{1}}_{k} = n(\mathbf{Z}_n^t)/n}} \sum_{1\leq a \neq b\leq k} [Q_{n}{\bf{1}}_{k_0}]_{a}[Q_{n}{\bf{1}}_{k_0}]_{b}  \times \\
&\hspace{4cm}\tau \biggl(\frac{[Q_n S^{0,t} Q_n^{\intercal}]_{ab}}{[Q_{n}{\bf{1}}_{k_0}]_{a}[Q_{n}{\bf{1}}_{k_0}]_{b}  }\biggr) + \eta_n,
\end{align*}
for some sequence $\eta_n\to 0$ as $n\to\infty$. Then taking $\lim\sup$ on both sides, and using Lemma \ref{lemme_technique} we must have that 
\begin{align*}
& \limsup_{n\to\infty} \frac{1}{2}\sum_{1 \leq a \neq b \leq k}  \frac{n_{ab}(\mathbf{z}_{n,k}^{\star,t}) }{ n^{2}} \tau \left( \frac{{o}_{a b}(\mathbf{z}_{n,k}^{\star,t}, A_{n\times n}^{t})}{\rho_n n_{ab}(\mathbf{z}_{n,k}^{\star,t})}\right) \\ 
& \leq \sup_{\substack {R: \| R\|_1=1\\R^{\intercal}{\bf{1}}_{k} = \alpha}} \frac{1}{2} \sum_{1\leq a\neq b\leq k} [R{\bf{1}}_{k_0}]_{a}[R{\bf{1}}_{k_0}]_{b} \tau \biggl(\frac{ [R S^{0,t} R^{\intercal}]_{ab} }{[R{\bf{1}}_{k_0}]_{a}[R{\bf{1}}_{k_0}]_{b} }\biggr),
\end{align*}
almost surely. 
The supremum  in the right-hand side is a maximum of a convex function over a convex polyhedron defined by $\{R\colon \|R\|_1=1, R^T{\bf {1}}_k = \alpha \}$. Then, the maximum must be attained at one of the vertices of the polyhedron;   that is, on those matrices $R$ such that at most one entry per column is greater than zero. Since $\alpha_a>0$ for all $a\in \{1,\dots,k_0\}$, it follows that each column must have at least one strictly positive entry. Thus, the maximum is achieved on matrices where one and only one entry by column is greater than zero. We denote by $R^{\star}$ one of these maxima (if there is more than one) and let 
\begin{align*} 
    \alpha^\star_a &=[R^{\star}{\bf{1}}_{k_0}]_a, \quad a\in \{1,\dots,k\} \nonumber\\
    S^{\star,t}_{ab} &= \frac{[R^{\star}S^{0,t}(R^{\star})^{\intercal}]_{ab}}{[R^{\star}{\bf{1}}_{k_0}{\bf{1}}_{k_0}^{\intercal} (R^{\star})^{\intercal}]_{ab}},\quad a \neq b\in \{1,\dots,k\}.
 \end{align*}
Then 
\begin{align*}
 \sup_{\substack {R: \| R\|_1=1\\R^{\intercal}{\bf{1}}_{k} = \alpha}} \frac{1}{2} \sum_{1\leq a\neq b\leq k} [R{\bf{1}}_{k_0}]_{a} &[R{\bf{1}}_{k_0}]_{b} \tau \biggl(\frac{ [R S^{0,t} R^{\intercal}]_{ab} }{[R{\bf{1}}_{k_0}]_{a}[R{\bf{1}}_{k_0}]_{b} }\biggr) \\
& = \frac{1}{2} \sum_{1 \leq a \neq b \leq k} \alpha_{a}^{\star} \alpha_{b}^{\star} \tau \left(S^{\star,t}_{ab}\right).
\end{align*}
Moreover, we have that 
\begin{align*}
 &\sum_{1 \leq a \leq k} \frac{\sum_{t=1}^T n_{aa}(\mathbf{z}_{n,k}^{\star,t})}{n^2}   \tau \left( \frac{\sum_{t=1}^To_{aa}(\mathbf{z}_{n,k}^{\star,t},A_{n\times n}^t)}{\sum_{t=1}^T\rho_n n_{aa}(\mathbf{z}_{n,k}^{\star,t})}\right) \\
 & = \sum_{1 \leq a\leq k}  \frac{1}{2}\sum_{t=1}^T[Q_{n}(\mathbf{z}_{n}^{\star,t},\mathbf{Z}_n^t){\bf{1}}_{k_0}]_{a}^2 \tau \left(\frac{2 {o}_{aa}(\mathbf{z}_{n}^{\star,t}, A_{n \times n}^t)/(\rho_n n^{2})}{\sum_{t=1}^T[Q_{n}(\mathbf{z}_{n}^{\star,t},\mathbf{Z}_n^t){\bf{1}}_{k_0}]_{a}^2 }\right).
\end{align*}
Then with the same method as before, and using Lemma \ref{lemme_prel_1} and Lemma \ref{lemme_technique} we have that
\begin{align*}
\limsup_{n\to\infty}& \sum_{1 \leq a \leq k} \frac{\sum_{t=1}^T n_{aa}(\mathbf{z}_{n,k}^{\star,t})}{n^2}  \tau \left( \frac{\sum_{t=1}^To_{aa}(\mathbf{z}_{n,k}^{\star,t},A_{n\times n}^t)}{\sum_{t=1}^T\rho_n n_{aa}(\mathbf{z}_{n,k}^{\star,t})}\right)\\
& \leq \sup_{\substack {R: \| R\|_1=1\\R^{\intercal}{\bf{1}}_{k} = \alpha}} \frac{T}{2} \sum_{1\leq a\leq k}  [R{\bf{1}}_{k_0}]_{a}^2  \tau \biggl(\frac{ \sum_{t=1}^T[R S^{0,t} R^{\intercal}]_{aa} }{T[R{\bf{1}}_{k_0}]_{a}^2 }\biggr),
\end{align*}
almost surely. 
Then, as before, the supremum  in the right-hand side is a maximum of a convex function over a convex polyhedron. Then, the maximum must be attained at one of the vertices of the polyhedron. We denote by $R^{\star}$ one of these maxima (if there is more than one) and let 
\begin{align*} 
    \alpha^\star_a &=[R^{\star}{\bf{1}}_{k_0}]_a, \quad a\in \{1,\dots,k\} \\
    S^{\star}_{aa} &= \frac{\sum_{t=1}^T[R^{\star}S^{0,t}(R^{\star})^{\intercal}]_{aa}}{T[R^{\star}{\bf{1}}_{k_0}]_{a}^2},\quad a \in \{1,\dots,k\}.
 \end{align*}
This concludes the proof of the lemma.

\subsection{Proof of Lemma \ref{lemme_prel_3_dyn}} \label{preuve_lemme_prel_3_dyn}
As $R^{\star}$ has one and only one non-zero entry in each column, we have that there is a surjective function $h_t : [k_0]\to[k]$ connecting each community in $[k_0]$ (columns of $R^{\star}$) with its corresponding community in $[k]$ (line with non-zero entry). Then for $k=k_0-1$,  there are $k-1$ communities in 
$\{1,\dots,k_0\}$ that are mapped into $k-1$ communities in  $\{1,\dots,k\}$ and two communities ($u(t),v(t)$) in $\{1,\dots,k_0\}$ that are mapped into a single community ($u(t)$) in $\{1,\dots,k\}$. The communities $u(t)$ and $v(t)$ satisfy $h_t(u(t))=h_t(v(t))=u(t)=v(t)$, and for all $a \neq u(t),v(t)$ we have that $h_t(a)=a$. Moreover, as $(R^{\star})^{\intercal}{\bf 1}_{k}=\alpha$, we must have that  the non-zero entries are given by 
\begin{align*}
R_{aa}^{\star}&=\alpha_a, \quad a \neq v(t)\\
R_{u(t),v(t)}^{\star}&= \alpha_{v(t)}.
\end{align*}
Then, the parameters $\alpha^\star$ and $S^{\star,t}$ defined in \eqref{def_S_star_alpha_star} are given by 
\begin{align*}
\alpha_{a}^{\star}&=\alpha_a, \quad  a \neq u(t) \\
\alpha_{u(t)}^{\star}&=\alpha_{u(t)}+\alpha_{v(t)},
\end{align*}
and
\begin{align*} 
S_{ab}^{\star,t}&= S^{0,t}_{ab}, \quad 1\leq a \neq b\leq k_0, \text{ and } a,b \neq u(t),v(t)\\
S_{a,u(t)}^{\star,t}&=\frac{\alpha_{u(t)}S^{0,t}_{a,u(t)}+\alpha_{v(t)}S^{0,t}_{a,v(t)}}{\alpha_{u(t)}+\alpha_{v(t)}},\quad a\neq u(t),\\
S_{aa}^{\star}=&\\
&\hspace{-2cm}\frac{\sum_{t=1}^T \left[ \alpha_{a}^2S^{0}_{aa}+(2\alpha_{u(t)}\alpha_{v(t)}S^{0,t}_{u(t),v(t)}+\alpha_{v(t)}^2S^{0}_{v(t),v(t)})\mathds{1}\{a=u(t)\} \right]}{\sum_{t=1}^T \left[\alpha_a + \alpha_{v(t)}\mathds{1}\{a=u(t)\} \right]^2}.
\end{align*}
Observe that for all $1\leq t \leq T$ and $a,b \neq u(t)$ such that $a\neq b$ we have that
\begin{equation*}
\alpha_{a}^{\star} \alpha_{b}^{\star} \tau \left(S^{\star,t}_{ab}\right) =  \alpha_{a} \alpha_{b} \tau \left(S^{0,t}_{ab}\right).
\end{equation*}
On the other hand we have that for $a\neq u(t)$ it follows, by using twice the log-sum inequality, that 
\begin{align*}
\alpha_a^\star\alpha_{u(t)}^\star\tau \left(S^{\star,t}_{a,u(t)}\right) \leq \alpha_a \alpha_{u(t)} \tau \left(S^{0,t}_{a,u(t)}  \right) +  \alpha_a \alpha_{v(t)} \tau \left(S^{0,t}_{a,v(t)} \right).
\end{align*}
Moreover, we have that the inequality must be strict unless
\begin{equation} \label{condition_1_s_dyn}
S^{0,t}_{a,u(t)}=S^{0,t}_{a,v(t)}, \qquad \text{for all }a\neq u(t).
\end{equation}
On the other hand, for all $a$, also by using twice the log-sum inequality we have that 
\begin{align*}
\sum_{t=1}^T(\alpha_{a}^\star)^2 \tau \left(S^{\star}_{aa}\right) &\leq T\alpha_a^2 \tau (S_{aa}^0) \\
&\qquad+ 2 \sum_{t=1}^T \alpha_{u(t)} \alpha_{v(t)} \tau(S_{u(t),v(t)}^{0,t} ) \mathds{1}_{\{a=u(t)\}} \\
&\qquad+\sum_{t=1}^T \alpha_{v(t)}^2 \tau (S_{v(t),v(t)} ^0) \mathds{1}_{\{a=u(t)\}}
\end{align*}
with equality if and only if, for all $1\leq t \leq T$,
\begin{equation} \label{condition_2_s_dyn}
S^{0}_{u(t),u(t)}= S^{0,t}_{u(t),v(t)}=S^{0}_{v(t),v(t)}.
\end{equation}
From \eqref{condition_1_s_dyn} and \eqref{condition_2_s_dyn} we obtain that the inequality in Lemma \ref{lemme_prel_3_dyn} must be strict unless 
\begin{equation*}
S^{0,t}_{a,u(t)} =  S^{0,t}_{a,v(t)} \qquad \text{for all }a\leq k_0\,
\end{equation*}
for all $1 \leq t \leq T$ which is a contradiction with the hypothesis for the matrix $S^{0,t}$. 

\section{\blue{Empirical illustration under model misspecification}}
\label{app:misspecification}

\blue{In this appendix, we provide a limited empirical illustration of the behavior of the MLSBM model selection criterion under model misspecification.}

\blue{Specifically, networks were generated according to a DynSBM data-generating process and subsequently estimated using the MLSBM criterion with the penalization defined in Equation~\eqref{pen}. The accuracy of the estimated clustering is reported as a function of the number of nodes.}

\blue{The results, displayed in Figure~\ref{fig:misspecification}, suggest that the MLSBM-based procedure remains reasonably accurate even when the underlying model is misspecified. While this experiment does not allow for a direct comparison with the DynSBM criterion---due to the lack of a full variational Bayes implementation for DynSBM---it nonetheless provides empirical evidence of a certain robustness of the approach.}

\blue{A more systematic study of model misspecification, including a full implementation of inference procedures for DynSBM, is left for future work.}

\begin{figure}[ht]
    \centering
     \includegraphics[ height=5cm]{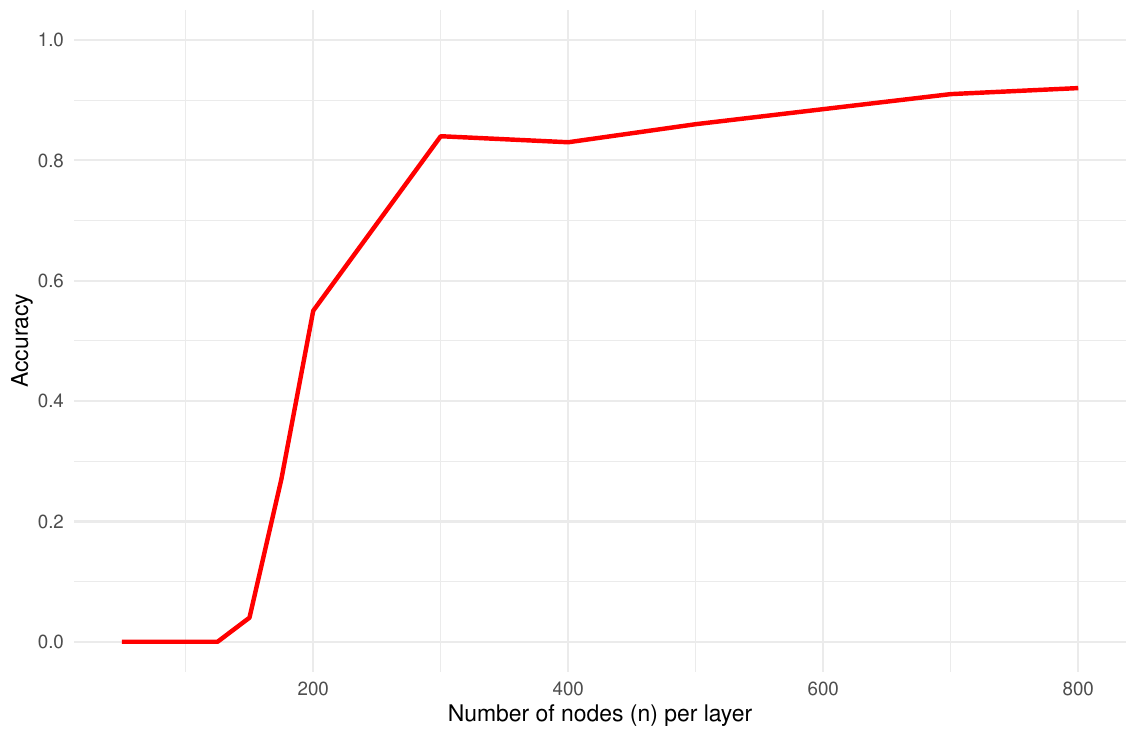}
    \caption{\blue{Clustering accuracy of the MLSBM estimator when data are generated from a DynSBM, as a function of the number of nodes.}}
    \label{fig:misspecification}
\end{figure}

\section*{Acknowledgments}
My work would not have led to the writing of this article without the kindness and wise advice of Catherine Matias, my thesis supervisor. I sincerely thank her.

\begin{IEEEbiographynophoto}{Lucie Arts}
spent two years in "classes préparatoires" before continuing her studies at Sorbonne University. She graduated with a Master's degree in Statistics and joined the LPSM in 2024, where she is currently a PhD student under the supervision of Catherine Matias. Her research focuses on estimation in SBMs.
\end{IEEEbiographynophoto}

\end{document}